\documentclass{article}

\usepackage{bm,latexsym,amsfonts,amsmath,amssymb,amsthm,url,graphics,psfrag,amscd,stmaryrd, mathrsfs}
\usepackage[english]{babel}
\usepackage[T1]{fontenc}
\usepackage{float}
\usepackage{graphics}
\usepackage{graphicx}
\usepackage{color}
\usepackage{hyperref}
\usepackage{comment}

\newcommand{\R}{\mathbb{R}}

\newcommand{\N}{\mathbb{N}}

\newcommand{\lin}{\left[\kern-0.15em\left[}
\newcommand{\rin} {\right]\kern-0.15em\right]}
\newcommand{\linf}{[\kern-0.15em [}
\newcommand{\rinf} {]\kern-0.15em ]}
\newcommand{\ilin}{\left]\kern-0.15em\left]}
\newcommand{\irin} {\right[\kern-0.15em\right[}

\newtheorem{theorem}{Theorem}[section]

\newtheorem{definition}[theorem]{Definition}

\newtheorem{proposition}[theorem]{Proposition}

\newtheorem{remark}[theorem]{Remark}

\newcommand{\be}{\begin{equation}}
\newcommand{\ee}{\end{equation}}
\newcommand{\beal}{\begin{align}}
\newcommand{\eal}{\end{align}}
\newcommand{\nn}{\nonumber}

\numberwithin{equation}{section}

\newcommand{\rem}[1]{}

\def\1{{\mathchoice {1\mskip-4mu\mathrm l}      
{1\mskip-4mu\mathrm l}
{1\mskip-4.5mu\mathrm l} {1\mskip-5mu\mathrm l}}}
\newcommand{\indic}[1]{\1_{\{#1\}}}

\newcommand{\invisible}[1]{}

\DeclareSymbolFont{extraup}{U}{zavm}{m}{n}
\DeclareMathSymbol{\varheart}{\mathalpha}{extraup}{86}
\DeclareMathSymbol{\vardiamond}{\mathalpha}{extraup}{87}

\usepackage[english]{babel}

\usepackage[letterpaper,top=2cm,bottom=2cm,left=3cm,right=3cm,marginparwidth=1.75cm]{geometry}

\usepackage{amsmath}
\usepackage{graphicx}

\title{Path space large deviation on a lattice \\ for interacting particle systems}

\author{
  Cristian Giardin\`a\thanks{Department of Physics, Informatics and Mathematics, University of Modena e Reggio Emilia, Modena, Italy. Email: \texttt{cristian.giardina@unimore.it}} \and 
  Tomohiro Sasamoto\thanks{Department of Physics, Institute of Science Tokyo, 2-12-1 Ookayama, Meguro-ku, Tokyo 152-8551, Japan. Email: \texttt{sasamoto@phys.sci.isct.ac.jp}}
}

\begin{document}
\maketitle

\tableofcontents

\begin{abstract}
We investigate the path space large deviation principle (LDP) for a class of interacting particle systems on a fixed finite discrete torus. As an illustrative example, we use the partial exclusion process with maximal occupation of $s$ particles.
We first verify the standard Kurtz-type law of large numbers
for the original and tilted processes, thereby identifying the deterministic
flows around which the large deviations are formulated.
We then establish the path space LDP, with speed $s$, and identify the associated rate function in variational form as the Legendre transform of a nonlinear Hamiltonian. 
The proof relies on martingale techniques, exponential tightness, and a change-of-measure argument based on exponential tilting, which renders atypical trajectories typical under a modified dynamics.

We also explain how the same strategy extends to further interacting particle systems.
The inclusion process illustrates the conservative non-compact finite-jump case.
The harmonic process is qualitatively different: a single transition may move a variable
number of particles, and the limiting Hamiltonian is logarithmic with a non-trivial
domain. This places the harmonic process outside the usual finite-reaction framework
for density-dependent Markov jump processes. Finally, we discuss continuous-state
energy-exchange models, such as the Brownian energy process and an integrable heat-
conduction model, to indicate how the same Hamiltonian large-deviation formalism
appears beyond discrete-state particle systems.

\end{abstract}

\section{Introduction}

Interacting particle systems provide a fundamental framework for connecting microscopic stochastic dynamics with macroscopic deterministic evolution equations. A central theme in the theory is the derivation of hydrodynamic limits and dynamical large deviation principles, which quantify, respectively, the typical and atypical behavior of empirical densities. Classical results for exclusion-type systems show that symmetric dynamics lead, in the diffusive scaling, to the heat equation, while deviations from the hydrodynamic limit are governed by variational rate functionals; see for instance \cite{kipnis2013scaling} and the seminal work \cite{kipnis1989hydrodynamics}. 
The systematic treatment of hydrodynamic limits for interacting particle systems is developed in \cite{demasi2006mathematical}, while the general framework of large deviations is presented in \cite{varadhan2016large}. 

In this paper we consider interacting particle systems on a periodic lattice, whose size $N$ is fixed, and study the large deviations of a {\em scaled} process, while keeping fixed the lattice structure.
The interacting particle systems have a parameter, called ``spin'' $s$.
Denoting the original process as $\{\eta(t), t\ge 0\}$, we define a sequence of scaled processes
$\{X^s(t), t\ge 0\}$ by
\be
X^s(t) = \frac{\eta(s t)}{s}, \qquad\qquad i=1,\ldots, N.
\ee
The behavior of the scaled process in the limit $s\to \infty$ is deterministic.
At finite $s$, the scaled process evolve by following the stochastic differential 
equations
\be
dX^s_i(t) =  \Delta X^s_i(t) dt + dM_i^{s}(t). 
\ee
Here $\Delta$ is the discrete laplacian and $M^{s}(t)$ is a 
martingale vector with quadratic variation of order ${\cal O}(1/s)$, so that the 
typical  fluctuation amplitude of the ${\bf X}^s$ process is of order ${\cal O}(1/\sqrt{s})$.

 The limit $s\to\infty$ is a ``fluid limit'', or small-noise limit. The limiting equation is a
finite-dimensional system of ordinary differential equations on
$\mathbb R^{N}$. 
Path-space large deviations quantify the probability of observing trajectories
which differ from this deterministic fluid limit.
In this set-up, five models on a finite lattice are studied in this paper:
\begin{enumerate}
\item 
the partial exclusion process
\item 
the inclusion process
\item 
the harmonic process
\item 
the Brownian energy process
\item 
the integrable heat conduction model
\end{enumerate}
These are all models of transport:
a conserved quantity (i.e., the number of particles in the first three models, the amount of energy in the last two models) is randomly redistributed among lattice sites.
For a general introduction about these
models and their interpretation as transport models we refer to the recent monograph \cite{GiardinaRedig2026}.

As an illustrative example, we treat at length the case of the  
\emph{partial exclusion process} on the discrete one-dimensional torus. Unlike the simple exclusion process, where each site may contain at most one particle, in the partial exclusion process the occupation variables take values in a finite set of mesh size $1/s$. The scaling parameter $s$ thus controls the granularity of the mass at each site. 
The deterministic limit of the process as
\(s\to\infty\) is the discrete heat equation on the torus. This convergence is a standard instance of Kurtz's theorem for
density-dependent Markov jump processes; see
\cite{Kurtz1970,kurtz1970solutions,kurtz1971limit,EthierKurtz2009}. We nevertheless include the short martingale
proof in the present notation, because the same argument is needed for the
tilted processes used in the large-deviation lower bound. In particular, the
tilted dynamics converges to a deterministic non-linear flow, which is the
trajectory made typical by the change of measure.

Our main result is a large deviation principle for the sequence of processes in path space, equipped with the Skorokhod topology, with speed $s$. The associated rate function is expressed in variational form as the Legendre transform of a nonlinear Hamiltonian determined by the jump structure of the model. We further show that the rate functional admits an equivalent representation in terms of relative entropy between the original path measure and suitably tilted dynamics, in the spirit of \cite{shwartz1995large}. This dual structure clarifies the relation between exponential martingales, change-of-measure techniques, and the Hamilton--Jacobi framework characteristic of dynamical large deviations \cite{varadhan2016large}.
The proof strategy follows the classical program for interacting particle systems \cite{kipnis2013scaling}. Exponential martingales yield the large deviation upper bound for compact sets, which is extended to closed sets via exponential tightness. For the lower bound, a carefully chosen tilting renders prescribed trajectories typical under the modified dynamics, allowing one to transfer concentration properties of the tilted process back to the original measure.

\paragraph{Why large deviations on the lattice?}
Fluctuation theories of diffusive systems have been studied extensively in the literature. At hydrodynamic level, the large deviations of density and current are governed by the so-called Macroscopic Fluctuation Theory \cite{bertini2015macroscopic}, which is believed to be a universal theory
applying to a large class of diffusive systems. So far, this has been established in some  special cases, essentially limited to exclusion-type models, where entropy methods and superexponential estimates are available \cite{kipnis1989hydrodynamics,kipnis2013scaling}. For systems with non-compact state space and reversible measures with
exponential tails, such as the Kipnis--Marchioro--Presutti (KMP) model
\cite{kipnis1982heat},  
the hydrodynamic large-deviation theory is
subtle \cite{BertiniGabrielliLebowitz2005}. In particular, unbounded local energies may lead to singular behaviour
of finite-cost trajectories. A recent result of Heydecker
\cite{kmp-pathologies} shows that, for the KMP process,
the dynamical large-deviation functional admits pathological trajectories with
finite cost. This illustrates that, beyond compact particle systems such as
exclusion, the structure of the path-space rate functional can be considerably
more delicate.
We consider here an easier version of this problem
in which we keep the lattice structure.
The approach we develop  is robust and does not rely on superexponential estimates.

We further show that our strategy can be applied to a large class of interacting particle systems with non-compact state space. In particular, besides exclusion type model, we extend the result to inclusion process and the harmonic process, which provide examples with 
reversible measure given by a discrete Gamma distribution. We also
treat diffusion processes, such as the Brownian energy process, as well as jump process with infinite intensity, such has an integrable model of heat conduction. These extensions highlight the flexibility of the method and show that both compact and non-compact settings can be treated within a unified framework.

We mention a few applications that may justify the interest in the large $s$ large deviations
proved in this paper:
\begin{itemize}

\item
For the partial exclusion process, a formal computation allows to recover from our results
the dynamical large deviation principle of \cite{kipnis1989hydrodynamics}. Our results thus suggest
a new route to establish a dynamical large deviation principle for the density of a broader class of conservative interacting particle systems for which the standard super-exponentail estimate 
is not available.

\item
Furtermore, on the base on the result of this paper, one can also define a discrete version of Macroscopic Fluctuation Theory which keeps the lattice structure.
This was developed in the companion paper \cite{gs1}.

\item 
Finally, for the models that are integrable (such as the harmonic process) one may expect to exploit classical integrability to
compute the large deviation function of
several observables, such as the current
\cite{gmss}.
\end{itemize}

In this paper we restrict to periodic systems defined on the torus. The results can be extended to other geometries, such as systems
defined in infinite volume. See, for instance \cite{landim1992occupation} where a large deviation principle was established for the empirical measure of the
symmetric simple exclusion processes 
on the lattice $\mathbb{Z}^d$
starting from an equilibrium state.

\paragraph{Relation with finite-jump large-deviation theory.}
Large deviations of Markov finite-jump processes is well developed. In particular, a
process with generator
\begin{equation}
\label{network-generator}
    (L^s f)(x)
    =
    s\sum_{r\in R}\lambda_r^s(x)
    \left[
        f\left(x+\frac1s\gamma_r\right)-f(x)
    \right],
\end{equation}
where \(R\) is a fixed finite set of jump channels and
\(\gamma_r\in\mathbb Z^d\) are fixed jump vectors, leads to the Hamiltonian
\be
    H(x,p)
    =
    \sum_{r\in R}
    \lambda_r(x)
    \left(e^{p\cdot\gamma_r}-1\right),
\ee
and this setting has been studied extensively; 
see, for example,
\cite{ShwartzWeiss1995,ShwartzWeiss2005,FengKurtz2006, maes2007and,
AgazziDemboEckmann2018,PattersonRenger2019, monthus2019microscopic,
PardouxSamegniKepgnou2017,gabrielli2020dynamical,AgazziAndreisPattersonRenger2022}.

The partial exclusion process treated below belongs, after the introduction of
hole variables, to this finite-jump class. The inclusion process is also of
finite-jump type, although the non-compactness of its state space requires a
localization argument based on mass conservation. The harmonic process, however,
does not belong to the finite-jump framework. In that model a single transition
may move \(k\) particles, where \(k=1,\ldots,sx_i\). Hence the possible jump
vectors are \(k(e_j-e_i)\), with \(k\) unbounded in the scaling limit. The
limiting Hamiltonian is logarithmic and has a non-trivial finite domain. Thus the large deviation principle for the harmonic process is not
a direct consequence of the finite-reaction large-deviation results cited above.

\paragraph{Organization of the paper.}
Our purpose is twofold. First, we present a self-contained derivation of the
path-space large-deviation principle for the partial exclusion process, in a
form that makes the Hamiltonian structure and the tilting argument transparent.
Second, we show how the same strategy extends beyond the basic compact
finite-jump setting. The inclusion process illustrates the conservative
non-compact finite-jump case, while the harmonic process provides a genuinely
different example with variable-size jumps and a logarithmic Hamiltonian. The
latter feature places the harmonic process outside the standard finite-reaction
framework for density-dependent Markov jump processes.

The paper is organized as follows. In Section~2 we introduce the partial
exclusion process, its tilted dynamics, and the corresponding deterministic
limiting equations. Section~3 proves the law of large numbers for the tilted
process. Section~4 identifies the Hamiltonian and the associated variational
rate functional, and states the path-space large-deviation principle. Sections~5
and~6 contain the proof of the upper and lower bounds, respectively, based on
exponential martingales and a change-of-measure argument. In Section~7 we explain
how the method extends to other models. We first discuss the inclusion process,
which remains a finite-jump conservative system but has non-compact state space.
We then treat the harmonic process, where jumps of variable size lead to a
logarithmic Hamiltonian and require a separate analysis. Finally, we discuss the
continuous-state models and indicate how the same strategy applies there.

\paragraph{Acknowledgment.}
The work of TS has been supported by JSPS KAKENHI Grants No. JP21H04432, No. JP23K22414.
CG acknowledges  support from Istituto Nazionale di Alta Matematica (INdAM) Grant No. CUP E53C23001740001. 
The authors thank T. Bodineau,  K. Mallick, F. Redig, H. Suda for useful discussions and suggestions. 
They are grateful to the Isaac Newton Institute for Mathematical Sciences for its hospitality during the program "New statistical physics in living matter: non-equilibrium states under adaptive control" (SPL), where this work was initiated. Part of this work was performed during the workshop "Rigorous Statistical Mechanics and Related Topics" at the Research Institute for Mathematical Sciences, Kyoto University, the workshops "Hydrodynamics, Fluctuations, and Noise in quantum and classical systems" (code: ICTS/hydrodynamics2025/12) and "Two and a half decades of the Macroscopic Fluctuation Theory 2026" (code: ICTS/MFTHEORY2026/09) at the International Centre for Theoretical Sciences (ICTS), and the workshop "Algebraic Methods in Probability" at the Simons Center for Geometry and Physics (SCGP). We acknowledge them for their support. 
We are very grateful to the anonymous referee for careful reading and a
number of constructive remarks.

\section{The partial exclusion process}

In this section we introduce the partial exclusion process, define a tilted version used in the large deviation analysis, and describe the associated deterministic limits.

\subsection{Model and notations}

Fix once for all a positive integer $N\in\mathbb{N}$ and let $\mathbb{T}_N = \mathbb{Z}/N\mathbb{Z}$ be the discrete torus.
All the quantities that will be considered in this paper depend on the actual value of system size $N$, however in the following we will not write the $N$-dependence to alleviate notation.

Let $\{\eta(t), t\ge0\}$
be the partial exclusion
process, first introduced in \cite{schutz1994non}, taking values in $\{0,1,\ldots, s\}^{\mathbb{T}_N}$ and defined by the generator
\begin{eqnarray}
\label{gen-original}
(Af)({\eta}) 
&=& 
\frac{1}{s^2} \sum_{i\in {\mathbb{T}_N}}  \eta_{i}(s-\eta_{i+1}) \left(f(\eta^{i,i+1})-f(\eta)\right)\nn\\
&+ & \frac{1}{s^2}\sum_{i\in {\mathbb{T}_N}} \eta_{i+1}(s-\eta_i) (f(\eta^{i+1,i})-f(x)).
\end{eqnarray}
Here $\eta^{i,i+1}$ is the configuration obtained from $\eta$
by moving one particle from site $i$ to site $i+1$.

We then consider a sequence of continuous-time Markov process ${\bm X}^s = \{X^{s}(t)\,:\, t\ge 0\}$, where $s\in\N$ is the sequence label,
defined by
\be
X^s(t) = \frac{\eta(s t)}{s}, \qquad\qquad i=1,\ldots, N.
\ee
For a given $s\in\N$ and a given $t\ge 0$, the random vector $X^{s}(t) =(X^s_1(t),X^s_2(t),\ldots, X^s_N(t))$ takes values in $\Omega^{s}=\{0,\frac{1}{s},\frac{2}{s},\ldots,1\}^{\mathbb{T}_N}$.
The generator of the process ${\bm X}^s$, working on functions $f:\Omega^s \to \mathbb{R}$, is given by:
\begin{eqnarray}
\label{gen}
(L^{s}f)({x}) 
&=& 
s \sum_{i\in {\mathbb{T}_N}}  x_{i}(1-x_{i+1}) \left(f(x^{i,i+1})-f(x)\right)\nn\\
&+ & s\sum_{i\in {\mathbb{T}_N}} x_{i+1}(1-x_i) (f(x^{i+1,i})-f(x)).
\end{eqnarray}
Here, each variable $x_i$ represents a rescaled occupation number, i.e. the number of particles at site $i$ divided by $s$. Furthermore, $x^{j,\ell}$ denotes the configuration that is obtained from $x\in\Omega^s$ by moving a mass $\frac{1}{s}$ from site $j$ to site $\ell$, i.e.
\begin{eqnarray}
\label{newconf}
x^{j,\ell}_i =
\begin{cases}
x_i, &  i\neq j,\ell,\\
x_i-\frac{1}{s},\qquad &  i = j,\\
x_i+\frac{1}{s}, & i = \ell.
\end{cases}
\end{eqnarray}
The factor $s$ in the generator corresponds to an acceleration of the dynamics that compensates the jump size $1/s$, ensuring a non-trivial deterministic limit as $s\to\infty$.
As we will work only with the scaled process ${\bm X}^s$, we shall use for it the same name of the original process ${\bm \eta}$, i.e. it will be called the {\em partial exclusion process with parameter (spin) value $s$}.
\footnote{
The name ``spin'' refers here to the identification of the Markov generator with the Hamiltonian of quantum spin chain (see \cite{schutz1994non}).
Our parameter $s$ is actually twice the spin value of the quantum chain.}.

For a fixed time interval $[0,T]$, we denote by $E^s = D([0,T];\Omega^s)$ the space of right continuous trajectories on $\Omega^s$ with left limits. Elements of $E^s$ are denoted by ${\bm x}^s=\{x^s(t)\,:\; t\in[0,T]\}$. 
We next discuss the initial condition of the process, which we call the ``local equilibrium initial condition''.
Given a vector $p=(p_1,\ldots,p_N)\in[0,1]^{N}$, we define the inhomogeneous product measures $\nu^s_{p}$  with mass function
\be
\label{initial-excl}
\nu^s_{p}(x) = \bigotimes_{i\in {\mathbb T}_N} {s \choose s x_i  } p_i^{s x_i }(1-p_i)^{s(1-x_i) }, \qquad\qquad x \in \Omega^s.
\ee
We denote
by $\mathbb{P}^{s}_{p}$ the path space measure of the process ${\bm X}^s$ initialized from configurations that are distributed according to $\nu^s_{p}$.
Defining $\bar{\mathsf{p}} = (\mathsf{p},\ldots,\mathsf{p})$, the homogeneous product measure $\nu_{\bar{\mathsf{p}}}$ is reversible for any  $0<\mathsf{p} < 1$.
Thus the local equilibrium initial conditions corresponds to starting from an inhomogenous version of the reversible measure, where
the marginal at each site $i$ is a scaled  binomial distribution with mean $p_i$.

We shall prove that, as $s\to\infty$, a trajectory of the partial exclusion process concentrates (in probability) on a deterministic flow ${\bm X} = \{X(t)\, : \, t\ge 0\}$ where
${X}(t)= (X_1(t),X_2(t),\ldots, X_{N}(t))$  takes values on $\Omega=[0,1]^{\mathbb{T}_N}$. The flow is defined as the solution of the system of differential equations
\begin{eqnarray}
\label{flow}
\frac{dX_i(t)}{dt} &=& X_{i-1}(t)- 2 X_i(t) + X_{i+1}(t), \qquad\qquad i\in\mathbb{T}_N,  
\end{eqnarray}
with prescribed initial condition $X(0)\in\Omega$.
Equivalently, the flow is generated  by the first-order differential operator 
\begin{eqnarray}
\label{generator-flow}
(Lf)({x}) 
&=& 
- \sum_{i\in {\mathbb{T}_N}}  (x_{i}-x_{i+1})
\left(\frac{\partial}{\partial x_i}-\frac{\partial}{\partial x_{i+1}}\right)f(x).
\end{eqnarray}

\subsection{The tilted partial exclusion process}
To prove the large deviations lower bound, we introduce a tilted process that modifies the jump rates so that a prescribed atypical trajectory becomes typical under the new dynamics.
The tilted partial exclusion process, denoted by
${\bm X}^{s,h} = \{X^{s,h}(t)\, : \, t\ge 0\}$, is the time-inhomogeneous Markov process taking values in $\Omega^s$ and generated by
\begin{align}
\label{gen-tilted}
(L^{s,h}f)({x}) 
&=~~ 
s \sum_{i\in {\mathbb{T}_N}}  x_{i}(1-x_{i+1}) e^{h_{i+1}(t)-h_i(t)} \left(f(x^{i,i+1})-f(x)\right)\nn\\
&\quad + s\sum_{i\in {\mathbb{T}_N}} x_{i+1}(1-x_i) e^{h_i(t)-h_{i+1}(t)}(f(x^{i+1,i})-f(x)).
\end{align}
Here ${h} : \mathbb{R} \to \mathbb{R}^{\mathbb{T}_N}$
is a continuous function which is used to tune the tilting of the jump rates and makes the process time-inhomogeneous. For a fixed interval of time $[0,T]$ we denote by $\mathbb{P}^{s,h}_{p}$ the law of the process $\bm{X}^{s,h}$ with initial distribution $\nu^s_p$.

The large $s$ limit of the tilted partial exclusion process will be the deterministic flow ${\bm X} = \{X(t)\, : \, t\ge 0\}$ taking values in $\Omega= [0,1]^{\mathbb{T}_N}$ and solving 
the equations
\begin{eqnarray}
\label{flow-tilted}
\frac{dX(t)}{dt} 
=
F^{h}(X(t)),
\end{eqnarray}
with initial conditions $X(0)\in\Omega$, where $F^{h}:\Omega \to \R^{\mathbb{T}_N}$ is defined by
\begin{align}
\label{vector-b}
F_i^{h}({x})
&=~~ 
x_{i-1}(1-x_{i})e^{h_{i}(t)-h_{i-1}(t)} - x_{i}(1-x_{i-1})e^{h_{i-1}(t)-h_{i}(t)} \nn\\
&\quad -x_i(1-x_{i+1})e^{h_{i+1}(t)-h_{i}(t)} + x_{i+1}(1-x_i)e^{h_i(t)-h_{i+1}(t)}.
\end{align}
The first two terms correspond to interactions with site $i-1$, while the last two correspond to interactions with site $i+1$.
Equivalently, the limiting flow is generated by the first-order operator 
\begin{eqnarray}
\label{generator-flow-tilted}
(Lf)({x}) 
&=& 
- \sum_{i\in {\mathbb{T}_N}}  \left(x_{i}(1-x_{i+1}) e^{h_{i+1}(t)-h_i(t)} -
x_{i+1}(1-x_i) e^{h_i(t)-h_{i+1}(t)}\right)
\left(\frac{\partial}{\partial x_i}-\frac{\partial}{\partial x_{i+1}}\right)f(x).\nn\\
\end{eqnarray}
This representation highlights the conservative structure of the dynamics.

\subsection{Remarks}
We conclude this section with two remarks.
\begin{itemize}
\item[(i)] {\em Recovery of the original process.} If $h\equiv 0$,  then the tilted process reduces to the original partial exclusion process, namely  $\bm{X}^{s,0} = \bm{X}^{s}$,
and the limiting deterministic dynamics reduces to the discrete diffusion equation \eqref{flow}.
\item[(ii)] {\em Relation with macroscopic PDEs.}
Eq. \eqref{flow} can be interpreted as a lattice discretization of the heat equation for the function $\rho(\theta,t)$ on the continuos torus $\mathbb{T} = {\mathbb R}/{\mathbb Z}$
\be
\frac{\partial \rho}{\partial t} = \frac{\partial^2 \rho}{\partial \theta^2}.
\ee
Formally, the correspondence is obtained by setting $X_i(t) \simeq \rho(i/N,t/N^2)$. 
This correspondence is heuristic and reflects the structure of a discrete Laplacian; no scaling limit in \(N\) is considered in this work.
Similarly, Eq. \eqref{flow-tilted} is a lattice discretization of the equation 
\be
\frac{\partial \rho}{\partial t} = \frac{\partial^2 \rho}{\partial \theta^2} - \frac{\partial}{\partial \theta}
\left(\rho(1-\rho)\frac{\partial H}{\partial \theta}\right)
\ee
with the additional identification
$h_i(t) \simeq H(i/N,t/N^2)$.
The additional term corresponds to a weak asymmetry induced by the tilt \cite{kipnis1989hydrodynamics}.
\end{itemize}

\section{Law of large numbers (Kurtz's theorem)}
\label{section-lln}


The convergence results in this section are standard consequences of Kurtz's
theory for density-dependent Markov jump processes
\cite{Kurtz1970,EthierKurtz2009}. We include the short proof for two reasons:
first, to fix notation and identify the deterministic vector fields used later;
second, because the lower bound in the large-deviation principle uses tilted
processes for which the corresponding deterministic flow must be made explicit.

\begin{theorem}[Law of large numbers]
\label{th-lln}
Consider the tilted partial exclusion process $\bm{X}^{s,h}$ started
from the initial measure
$\nu_p^s$ in \eqref{initial-excl}.
Then, for every $T>0$ and every $\epsilon >0$ we have
\begin{equation} 
\label{main-kurtz}
\lim_{s\to\infty}\mathbb{P}^{s,h}_p  (\sup_{t\in [0,T]} |{X}^{s,h}(t)-{X}(t)| > \epsilon) =0, 
\end{equation}
where $\bm{X}$ is the deterministic flow solving \eqref{flow-tilted} with initial condition
${X}(0) = {p}$.
\end{theorem}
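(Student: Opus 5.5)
We follow the standard martingale route. Write the Dynkin decomposition for the coordinate functions $f(x)=x_i$ under the time-inhomogeneous generator \eqref{gen-tilted}:
\begin{equation*}
X^{s,h}_i(t) = X^{s,h}_i(0) + \int_0^t (L^{s,h}x_i)(X^{s,h}(u))\,du + M^{s}_i(t).
\end{equation*}
Here $M^s_i$ is a martingale. Since each jump changes $x_i$ by $\pm 1/s$ and the rates carry a factor $s$, a direct computation gives $(L^{s,h}x_i)(x)=F^h_i(x)$ exactly, with $F^h$ as in \eqref{vector-b} and no error term. The predictable quadratic variation is
\begin{equation*}
\langle M^s_i\rangle_t=\int_0^t s\cdot\frac{1}{s^2}\,\bigl(\text{sum of the four rates at } i\bigr)\,du\le \frac{4C_h t}{s},
\qquad C_h=\sup_{u\le T,\,j}e^{|h_{j+1}(u)-h_j(u)|}<\infty,
\end{equation*}
which is finite because $h$ is continuous on $[0,T]$ and $x\in[0,1]^{\mathbb T_N}$. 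Doob's $L^2$ inequality then yields $\mathbb E^{s,h}_p[\sup_{t\le T}|M^s(t)|^2]\le 16NC_hT/s\to0$.

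Next we control the initial condition. Under $\nu^s_p$ each $X^{s}_i(0)$ is a binomial$(s,p_i)$ variable divided by $s$, so its variance is $p_i(1-p_i)/s$. Chebyshev's inequality gives $X^{s,h}(0)\to p$ in probability.

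For the comparison we need $F^h(\cdot,t)$ to be Lipschitz in $x$ on $\Omega$, uniformly in $t\in[0,T]$. This holds because $F^h$ is a quadratic polynomial in $x$ with bounded continuous coefficients; call the constant $K$. It also guarantees a unique solution $X$ of \eqref{flow-tilted}, and this solution stays in $\Omega$: at a face $x_i=0$ one has $F^h_i\ge0$, and at a face $x_i=1$ one has $F^h_i\le0$. Subtracting the integral equation for $X$, we get for $\Delta(t)=\sup_{u\le t}|X^{s,h}(u)-X(u)|$
\begin{equation*}
\Delta(t)\le |X^{s,h}(0)-p| + \sup_{u\le T}|M^s(u)| + K\int_0^t\Delta(u)\,du.
\end{equation*}
Gronwall's inequality gives $\Delta(T)\le e^{KT}\bigl(|X^{s,h}(0)-p|+\sup_{u\le T}|M^s(u)|\bigr)$. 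Both terms tend to zero in probability, which proves \eqref{main-kurtz}.

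The only step needing care is the time-inhomogeneity: one must check that $f(t,x)=x_i$ gives a genuine martingale under the inhomogeneous generator. This is routine because $h$ is continuous and the rates are bounded, so the process is a bounded-rate jump process on a finite state space.
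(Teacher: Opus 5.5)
Your proposal is correct and follows essentially the same route as the paper: the Dynkin decomposition with $L^{s,h}f_i=F^h_i$ holding exactly, Lipschitz continuity of $F^h$ on $\Omega$ combined with Gr\"onwall, Doob's inequality with the $O(1/s)$ bracket for the martingale term, and concentration of the binomial initial condition. You also check explicitly that the flow stays in $\Omega$, since the field points inward on the faces of the cube; this justifies using the Lipschitz constant on $\Omega$ and gives global existence on $[0,T]$, a detail the paper leaves implicit.
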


\begin{proof}
Since $\bm{X}^{s,h}$ is a Markov process then, defining $f_i(x) = x_i$ for $i\in\mathbb{T}_N$,  we have that  $\bm{M}^{s,h}$ 
is a  martingale vector with components
\be
{M}_i^{s,h}(t) = X_i^{s,h}(t) - X_i^{s,h}(0) - \int_{0}^t L^{s,h} f_i({X^{s,h}}(u)) du.
\ee
Therefore we can write
\be
\label{x-s-vector}
X_i^{s,h}(t)  =   X_i^{s,h}(0) + \int_{0}^t L^{s,h} f_i({X^{s,h}}(u)) du
+ {M}_i^{s,h}(t).
\ee
Similarly, for the flow $\bm{X}$ we can write
\be
\label{x-vector}
X_i(t)  =   X_i(0) + \int_{0}^t L f_i(X(u)) du.
\ee
Recalling the definition of the generators $L^{s,h}$ in \eqref{gen-tilted} and $L$ 
in \eqref{generator-flow-tilted} a direct computation shows that
\begin{eqnarray}
\label{L-action-on-f_i}
L^{s,h}f_i({x}) &=&  F^{h}_i({x}),\qquad\qquad x\in\Omega^s,\nn\\
Lf_i({x}) &=& F^{h}_i({x}),\qquad\qquad x\in\Omega,
\end{eqnarray}
where the function
$F^{h}$ is defined in \eqref{vector-b}. Note that the right-hand sides of the above equations are the same implying that, on their common domain, $L^{s,h}f_i=Lf_i$
for all values of $s$.
All in all,
combining together \eqref{x-s-vector}, \eqref{x-vector}
and \eqref{L-action-on-f_i}, we can write
\be
{X}^{s,h}(t)-{X}(t) =
{X}^{s,h}(0)-{X}(0) + 
\int_{0}^t [{F}^{h}({X}^{s,h}(u)) - {F}^{h}({X}(u))] du +  {M}^{s,h}(t) 
\ee
where the equality is understood component-wise. Taking norms and using triangular inequality, we get
\be
\label{triang-inequality}
|{X}^{s,h}(t)-{X}(t)| \le
|{X}^{s,h}(0)-{X}(0)| + 
\int_{0}^t |{F}^{h}({X}^{s,h}(u)) - {F}^{h}({X}(u))| du +  |{M}^{s,h}(t)|.
\ee
Since $h \in C([0,T];\mathbb{R}^N)$, it is bounded. 
Hence the quantities
$
e^{\pm (h_{i+1}(t)-h_i(t))}
$
are uniformly bounded for $t \in [0,T]$. Therefore, since the vector field $F^h$ is polynomial in $x$ with bounded coefficients, then it is
globally Lipschitz.
Hence there exists $K>0$ such that
\[
\lvert F^h(x) - F^h(y) \rvert \le K \lvert x - y \rvert,
\qquad \forall x,y \in \Omega,\ \forall t \in [0,T].
\]
As a consequence, we have that
\be
\label{ineq-after-lipschitz}
|{X}^{s,h}(t)-{X}(t)| \le
|{X}^{s,h}(0)-{X}(0)| + 
K \int_{0}^t |{X}^{s,h}(u) - {X}(u)| du +  |{M}^{s,h}(t)|.
\ee
To proceed further we recall Gr\"onwall's inequality (see for instance \cite{ethier2009markov}, p. 498): if one has a bounded function $y:[0,T]\to \R$ such that for all $t\in[0,T]$
\be
y(t) \le A + B \int_{0}^t y(u)du
\ee
where $A$ and $B$ are some positive constants (which might depend on $T$), then one has for all $t\in[0,T]$
\be
y(t) \le A e^{B T}.
\ee
Taking the supremum over \(t\in[0,T]\) in \eqref{ineq-after-lipschitz} and applying
Gronwall's inequality gives
\[
    \sup_{t\in[0,T]} |X^{s,h}(t)-X(t)|
    \leq
    e^{KT}
    \left(
        |X^{s,h}(0)-X(0)|
        +
        \sup_{t\in[0,T]} |M^{s,h}(t)|
    \right).
\]
Consequently,
\begin{eqnarray} 
\label{eq:intermediate}
&&\mathbb{P}_{p}^{s,h}   (\sup_{t\in [0,T]} |{X}^{s,h}(t)-{X}(t)| > \epsilon) 
\\
&&\le 
\mathbb{P}_{p}^{s,h}   (|{X}^{s,h}(0)-{X}(0)| > \frac{\epsilon}{2} e^{-KT}) +\mathbb{P}_{p}^{s,h}   ( \sup_{t\in [0,T]} |{M}^{s,h}(t)|> \frac{\epsilon}{2} e^{-KT}). \nonumber
\end{eqnarray}
Recalling \eqref{initial-excl}, it is easy to see that the first term on the r.h.s. vanishes as $s\to\infty$ when $X(0) = p$. It remains to control the martingale term, namely to prove
\eqref{main-kurtz} it is enough to show that
\begin{equation} 
\label{M-ineq}
\lim_{s\to\infty}\mathbb{P}_{p}^{s,h}   (\sup_{t\in [0,T]} |{M}^{s,h}(t)|> \frac{\epsilon}{2} e^{-KT}) =0.
\end{equation}
To this aim, we compute the quadratic covariation defined as
\begin{eqnarray}
\langle M_j^{s,h}, M_k^{s,h}\rangle(t) 
&=& 
\int_{0}^t \Gamma(f_j,f_k)({X}^{s,h}(u))du
\end{eqnarray}
where
\begin{align}
\label{covar}
\Gamma(f_j,f_k)({x})
&=~~ 
s \sum_{i\in {\mathbb{T}_N}}  x_{i}(1-x_{i+1}) e^{h_{i+1}(t)-h_i(t)} \left(f_j(x^{i,i+1})-f_j(x)\right)\left(f_k(x^{i,i+1})-f_k(x)\right)\nn\\
&\quad + s\sum_{i\in {\mathbb{T}_N}} x_{i+1}(1-x_i) e^{h_i(t)-h_{i+1}(t)}
\left(f_j(x^{i,i+1})-f_j(x)\right)\left(f_k(x^{i,i+1})-f_k(x)\right).\nn\\
\end{align}
Each jump changes \(f_i(x)=x_i\) by at most \(1/s\), while jump rates are of order \(s\). Therefore the carré du champ satisfies
\[
\Gamma(f_i,f_i)(x) = O\left(\frac{1}{s}\right),
\]
uniformly in \(x\), which implies
\be
\lim_{s\to\infty} \langle M_i^{s,h},M_j^{s,h}\rangle(t) = 0.
\ee
Next, applying Doob's inequality and 
using the isometry for square-integrable martingales we conclude that, for each $i\in [N]$,
\begin{eqnarray}
\label{doob}
\lim_{s\to\infty}\mathbb{P}_{p}^{s,h}(\sup_{t\in [0,T]} |{M}_i^{s,h}(t)| > \frac{\epsilon}{2} e^{-KT}) \le 
\lim_{s\to\infty} 4\frac{\mathbb{E}[(M_i^{s,h}(T))^2]}{\epsilon^2 e^{-2KT}} 
=
\lim_{s\to\infty} 4\frac{\mathbb{E}\left[\langle M_i^{s,h},M_i^{s,h}\rangle(T)]\right]}{\epsilon^2e^{-2KT}} =0.\nonumber\\
\end{eqnarray}
This implies  \eqref{M-ineq} and therefore proves \eqref{main-kurtz}.
\end{proof}

The special structure of the initial measure $\nu^{s}_p$ in \eqref{initial-excl} did not play any particular role
in the proof above. The choice of the initial measure only matters to show that the first term on the right hand side of \eqref{eq:intermediate}
 vanishes in the large $s$ limit. Therefore the hypothesis of Theorem \ref{th-lln} concerning the initial state can be considerably relaxed, as the next theorem shows.

\begin{theorem}[Law of large numbers, generic initial conditions]
\label{th-lln-gen}
Consider the tilted partial exclusion process $\bm{X}^{s,h}$ started from the initial measure ${\nu^{s}}$ and denote by $\mathbb{P}_{\nu^{s}}^{s,h}$
its path space probability measure.
For a given $x\in\Omega$, 
assume that the initial distributions $(\nu^{s})_{s\in\mathbb{N}}$ are such that 
\begin{equation}
\label{assumption-initial}
X^{s,h}(0) \xrightarrow{{\mathbb{P}_{\nu^{s}}}} x
\end{equation}
that is,
\begin{equation}
\lim_{s\to\infty} \mathbb{P}_{\nu^{s}}\big(|X^{s,h}(0)-x|> \epsilon\big) = 0
\qquad \forall \epsilon > 0.
\label{eq:initial-convergence}
\end{equation}
Then, for every $T>0$ and every $\epsilon >0$ we have
\begin{equation} 
\label{main-kurtz-general-init-cond}
\lim_{s\to\infty}\mathbb{P}_{\nu^{s}}^{s,h}  (\sup_{t\in [0,T]} |{X}^{s,h}(t)-{X}(t)| > \epsilon) =0, 
\end{equation}
where $\bm{X}$ is the deterministic flow solving \eqref{flow-tilted} with initial condition
${X}(0) = {x}$.
\end{theorem}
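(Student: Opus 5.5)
The plan is to rerun the proof of Theorem \ref{th-lln} verbatim up to inequality \eqref{eq:intermediate}, with $\mathbb{P}^{s,h}_p$ replaced by $\mathbb{P}^{s,h}_{\nu^s}$ and $p$ replaced by $x$. Nothing in that part of the argument used the product structure of $\nu^s_p$.

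First, the martingale decomposition \eqref{x-s-vector} holds for any initial law, since $X^{s,h}$ is a Markov process on the finite state space $\Omega^s$ with bounded rates. The identity \eqref{L-action-on-f_i}, $L^{s,h}f_i = F^h_i$, is purely algebraic. Second, the Lipschitz bound on $F^h$ over $\Omega$ with constant $K$ depends only on $\sup_{[0,T]}|h|$, so \eqref{ineq-after-lipschitz} holds pathwise. Third, Gr\"onwall's inequality applied pathwise gives
\[
\sup_{t\in[0,T]}|X^{s,h}(t)-X(t)| \le e^{KT}\Bigl(|X^{s,h}(0)-x| + \sup_{t\in[0,T]}|M^{s,h}(t)|\Bigr),
\]
where $X$ now solves \eqref{flow-tilted} with $X(0)=x$. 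A union bound then yields the analogue of \eqref{eq:intermediate} with two terms.

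The initial term $\mathbb{P}_{\nu^s}(|X^{s,h}(0)-x|>\tfrac{\epsilon}{2}e^{-KT})$ vanishes directly by assumption \eqref{eq:initial-convergence}. This is the only place where the initial law enters.

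For the martingale term, the bound $\Gamma(f_i,f_i)(y)\le C/s$ holds uniformly in $y\in\Omega^s$ and $t\in[0,T]$. Here $C$ depends on $N$ and $\sup|h|$, and the bound uses $y_i(1-y_{i+1})\le 1$. Hence $\mathbb{E}_{\nu^s}[\langle M_i^{s,h}\rangle(T)]\le CT/s$ regardless of $\nu^s$. Doob's $L^2$ inequality, as in \eqref{doob}, then gives that the martingale probability tends to zero, uniformly over all initial distributions. Combining the two terms proves \eqref{main-kurtz-general-init-cond}.

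There is no real obstacle. The one point requiring care is to check that every constant ($K$, the bound on $\Gamma$) is deterministic and independent of the initial configuration, so that the estimates do not secretly use the binomial form of $\nu^s_p$. These constants are indeed independent, because the state space is compact, $\Omega^s\subset[0,1]^{\mathbb{T}_N}$.
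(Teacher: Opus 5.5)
Your proposal is correct and takes the same route as the paper. The paper's proof also says the argument of Theorem~\ref{th-lln} carries over verbatim with $p$ replaced by $x$, and the initial-condition term in \eqref{eq:intermediate} now vanishes by assumption \eqref{eq:initial-convergence}. Your check that the Lipschitz constant $K$ and the $O(1/s)$ bound on $\Gamma(f_i,f_i)$ are uniform over $\Omega^s$, and hence independent of $\nu^s$, is exactly what justifies calling the proof identical.
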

\begin{proof}
The proof is identical to that of Theorem \ref{main-kurtz}, replacing convergence to $p$ with convergence to $x$.
\end{proof}

\section{Large deviations}

Large deviation principle (around the hydrodynamical limit)  for the empirical density of the exclusion process has been proved in 
\cite{kipnis1989hydrodynamics}, see also    \cite{casini2024large} for the generalization to the multi-species case. 
In the present section we instead establish a fixed-lattice
path-space large-deviation principle for the partial exclusion process in the
fluid limit \(s\to\infty\).

\subsection{Rate function}

We first define the rate function as the sum of a dynamical and an initial cost.

\paragraph{Dynamical part.}

For $x \in \Omega$ and $w \in \mathbb{R}^{\mathbb{T}_N}$, we define  the Hamiltonian associated with the partial exclusion process:
\begin{equation}
H(x,w)
=
\sum_{i \in \mathbb{T}_N}
\Big(
x_i(1-x_{i+1})(e^{w_{i+1}-w_i}-1)
+
x_{i+1}(1-x_i)(e^{w_i-w_{i+1}}-1)
\Big).
\label{hamiltonian}
\end{equation}
Then, given $\bm{x} \in D([0,T],\Omega)$, we define the dynamical cost as
\begin{equation}
I_{\mathrm{dyn}}(\bm{x})
=
\sup_{{h} \in C^1([0,T];\mathbb{R}^{\mathbb{T}_N})}
\left[
h(T)\cdot x(T) - h(0)\cdot x(0)
-
\int_0^T \big(\dot h(t)\cdot x(t) - H(x(t),h(t))\big)\,dt
\right].
\label{eq:rate-global}
\end{equation}
If $\bm{x}$ is absolutely continuous, an integration by parts gives
\be
I_{\mathrm{dyn}}(\bm{x})
=
\sup_{{h} \in C^1([0,T];\mathbb{R}^{\mathbb{T}_N})}
\int_0^T \Big(h(t)\cdot \dot x(t) - H(x(t),h(t))\Big)\,dt.
\ee
Since the integrand is local in time and $H(x,\cdot)$ is finite and convex,
a standard interchange argument 
(see for instance \cite{dembo2009large}, section 5.1)
allows one to exchange the supremum and the time integral. Therefore 
the dynamical part can be cast in the form of an action functional, i.e.
\begin{equation}
I_{\mathrm{dyn}}(\bm{x})
=
\begin{cases}
\displaystyle \int_0^T L(x(t),\dot x(t))\,dt,
& \text{if } \bm{x} \text{ is absolutely continuous}, \\[1ex]
+\infty,
& \text{otherwise},
\end{cases}
\label{eq:rate-dyn}
\end{equation}
where the Lagrangian is
\begin{equation}
L(x,v)
=
\sup_{w \in \mathbb{R}^{\mathbb{T}_N}}
\big\{ w \cdot v - H(x,w) \big\}.
\label{eq:Lagrangian}
\end{equation}

\medskip

\paragraph{Static part.}

Let $(\nu^s_p)$ be the initial distribution in \eqref{initial-excl}.
We define the cost of the
local equilibrium initial condition as
\begin{equation}
I_0(x)
=
\begin{cases}
\sum_{i \in \mathbb{T}_N}
\left(
x_i \log \frac{x_i}{p_i}
+
(1-x_i)\log \frac{1-x_i}{1-p_i}
\right), & x\in\Omega,\\
+\infty, & \text{otherwise}.
\end{cases}
\label{eq:initial-cost}
\end{equation}
Thus $I_0$ coincides with the large deviation rate
function of the Binomial$(s,p)$ distribution scaled by $1/s$, which is in turn given
by the relative entropy between Bernoulli distributions with parameters
$x$ and $p$.

\medskip

\paragraph{Full rate function.}

The rate function $I : D([0,T],\Omega) \to [0,+\infty]$ is defined by
\begin{equation}
I(\bm{x})
=
I_0(x(0)) + I_{\mathrm{dyn}}(\bm{x}).
\label{rate-fct-dem}
\end{equation}
We recall that a good rate function is, by definition, a lower semicontinuous
function whose level sets are compact. For the functional \(I\) defined above,
lower semicontinuity and compactness of the level sets follow from the standard
properties of action functionals associated with finite-dimensional
density-dependent jump processes; see, for instance,
\cite[Chapter~8]{FengKurtz2006} and \cite[Chapter~5]{ShwartzWeiss1995}.
Indeed, the Hamiltonian \(H(x,\cdot)\) is convex, the Lagrangian \(L\) is its
Legendre transform, and bounded action gives compact sets of paths in
\(D([0,T],\Omega)\).

\subsection{Main result}

\begin{theorem}[Large deviation principle]
\label{main-theorem}
The sequence of partial exclusion processes $(\bm{X}^s)_{s\in\mathbb{N}}$ with  initial distributions $(\nu^s_p)_{s\in \N}$ in \eqref{initial-excl}, satisfies a large deviation principle on $D([0,T],\Omega)$ with speed $s$ and good rate function $I$ defined in \eqref{rate-fct-dem}.
More precisely,  we have the following:
\begin{enumerate}
\item
(Upper bound). For every closed set ${\cal C}$,
\[
\limsup_{s\to\infty} \frac{1}{s}\log \mathbb{P}^s_{p}(\bm{X}^s \in {\cal C})
\le -\inf_{\bm{x}\in {\cal C}} I(\bm{x}).
\]
\item
(Lower bound). For every open set ${\cal O}$,
\[
\liminf_{s\to\infty} \frac{1}{s}\log \mathbb{P}^s_{p}(\bm{X}^s \in {\cal O})
\ge -\inf_{\bm{x}\in {\cal O}} I(\bm{x}).
\]
\end{enumerate}
\end{theorem}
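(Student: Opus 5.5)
The proof follows the classical program of \cite{kipnis2013scaling}: exponential martingales give the upper bound, and exponential tilting gives the lower bound. The basic object is, for $h\in C^1([0,T];\mathbb{R}^{\mathbb{T}_N})$, the function $f(t,x)=s\,h(t)\cdot x$ and the associated mean-one exponential martingale
\[
\mathcal{M}^{s,h}(t)=\exp\Big\{s\big[h(t)\cdot X^s(t)-h(0)\cdot X^s(0)\big]-\int_0^t e^{-f}(\partial_u+L^s)e^{f}(u,X^s(u))\,du\Big\}.
\]
A direct computation from \eqref{gen} shows that $e^{-f}L^se^{f}=s\,H(x,h)$ exactly, with $H$ as in \eqref{hamiltonian}, because each jump changes $h\cdot x$ by $\pm(h_{i+1}-h_i)/s$. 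Hence
\[
\mathcal{M}^{s,h}(T)=\exp\{s\,J_h(\bm{X}^s)\},
\]
where $J_h$ is the functional inside the supremum in \eqref{eq:rate-global}. This $J_h$ is continuous in the Skorokhod topology on paths with values in the compact set $\Omega$. Moreover, $\mathcal{M}^{s,h}$ is precisely the Radon--Nikodym derivative $d\mathbb{P}^{s,h}_p/d\mathbb{P}^s_p$ of the tilted dynamics \eqref{gen-tilted} relative to the original one.

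\textbf{Upper bound.} First fix a compact set $\mathcal{K}$. To include the initial cost, tilt the initial law by $e^{s\lambda\cdot X^s(0)}$. Under $\nu^s_p$ this gives
\[
\mathbb{E}\big[e^{s\lambda\cdot X(0)}\mathcal{M}^{s,h}(T)\big]=\prod_i(1-p_i+p_ie^{\lambda_i})^s.
\]
The exponential Chebyshev inequality then yields
\[
\limsup_{s\to\infty}\tfrac1s\log\mathbb{P}(\bm{X}^s\in\mathcal{K})\le-\inf_{\bm{x}\in\mathcal{K}}\big[\lambda\cdot x(0)-\Lambda(\lambda)+J_h(\bm{x})\big],
\]
where $\Lambda(\lambda)=\sum_i\log(1-p_i+p_ie^{\lambda_i})$. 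Because the bracket is continuous in $\bm{x}$, the minimax lemma (covering $\mathcal{K}$ by finitely many neighbourhoods on which a near-optimal $(\lambda,h)$ works, as in \cite{kipnis2013scaling}) lets us exchange $\inf_{\bm{x}}$ and $\sup_{\lambda,h}$. Since the Legendre transform of $\Lambda$ is $I_0$, the result is $-\inf_{\mathcal{K}}I$.

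To pass to closed sets, I will prove exponential tightness. Since $\Omega$ is compact, only the modulus of continuity needs control. Applying the same martingale with constant $h=\pm a e_i$ on short intervals $[\tau,\tau+\delta]$, with $\tau$ a stopping time, and using $|H(x,ae_i)|\le C e^{2a}$, gives
\[
\mathbb{P}\big(|X_i^s(\tau+\delta)-X_i^s(\tau)|>\epsilon\big)\le 2e^{-s(a\epsilon-\delta Ce^{2a})}.
\]
Choosing $a$ large and then $\delta$ small, combined with the Aldous-type criterion for exponential tightness, gives exponential tightness.

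\textbf{Lower bound.} It suffices to show that for each $\bm{x}$ with $I(\bm{x})<\infty$ and each neighbourhood $\mathcal{O}\ni\bm{x}$,
\[
\liminf_{s\to\infty}\tfrac1s\log\mathbb{P}(\bm{X}^s\in\mathcal{O})\ge-I(\bm{x}).
\]
This is first done for a dense class of paths: those with $x(t)$ in the interior $(0,1)^{\mathbb{T}_N}$ and $\dot x=F^h(x)$ for some $h\in C^1$ (or piecewise smooth), with initial point $x(0)=q\in(0,1)^N$. For such a path, perform the change of measure: tilt the initial law to $\nu^s_q$ and the dynamics to $\mathbb{P}^{s,h}$. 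By Theorem~\ref{th-lln} (with initial measure $\nu^s_q$), $\bm{X}^{s,h}\to\bm{x}$ in probability. Then Jensen's inequality applied to
\[
\mathbb{P}^s_p(\mathcal{O})=\mathbb{E}^{s,h}_q\big[\mathbf 1_{\mathcal{O}}\,d\mathbb{P}^s_p/d\mathbb{P}^{s,h}_q\big]
\]
gives the bound $-\tfrac1s\mathbb{E}^{s,h}_q[\log(d\mathbb{P}^{s,h}_q/d\mathbb{P}^s_p)]+o(1)$. This expected relative entropy converges to
\[
I_0(q)+\int_0^T\big[h\cdot\dot x-H(x,h)\big]dt,
\]
by the LLN and bounded convergence, using the explicit form of $\log\mathcal{M}^{s,h}(T)$. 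The limit equals $I(\bm{x})$ because $\dot x=F^h(x)=\nabla_wH(x,h)$ identifies $h$ as the optimizer in \eqref{eq:Lagrangian}.

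\textbf{Main obstacle.} The hardest step is the \emph{$I$-density} argument: every $\bm{x}$ with $I(\bm{x})<\infty$ must be approximated by paths $\bm{x}^n\to\bm{x}$ of the above class with $I(\bm{x}^n)\to I(\bm{x})$. Two difficulties arise. The rates $x_i(1-x_{i+1})$ degenerate at the boundary of $\Omega$, so the momentum $h$ may blow up there. In addition, $\dot x$ must lie in the range of $\nabla_wH(x,\cdot)$, which is the set of mass-conserving velocities. To handle this I would first mix the path with the deterministic heat flow started slightly inside: set $\bm{x}^\epsilon=(1-\epsilon)\bm{x}+\epsilon\bar{\mathsf{p}}$ and use convexity of $L(x,\cdot)$ together with joint convexity-type bounds for the Bernoulli-type Lagrangian, so that $\bm{x}^\epsilon$ stays in a compact subset of the interior. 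Next, a short initial heat-flow segment and time mollification make $\dot x$ smooth. On the interior, $H$ is strictly convex in the non-constant directions of $w$, so the implicit function theorem produces a $C^1$ momentum $h$. Finally, lower semicontinuity of $I$ and convexity of $I_0$ control the limit of the costs.
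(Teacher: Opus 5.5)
Your proposal is correct. The upper bound matches the paper's, but the lower bound follows a genuinely different route.

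Your upper bound is the paper's argument: the exponential martingale with $e^{-f}L^se^{f}=sH(x,h)$, the initial tilt, the minimax lemma, and $\Lambda^*=I_0$. Your stopping-time/Aldous form of exponential tightness is equivalent in substance to the paper's partition-plus-Markov-property estimate; if your criterion needs $\sup_{\theta\le\delta}$ inside the probability, Doob's inequality for the same martingale supplies it.

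The real difference is the lower bound. The paper applies the change of measure to an arbitrary $\bm{x}$ and takes a continuous $h\in A(\bm{x})$ for granted. It then identifies $\inf_{h\in A(\bm{x})}\int_0^T j^h\,dt$ with $I_{\mathrm{dyn}}(\bm{x})$ through Proposition~\ref{rate2} and an external convex-duality theorem. You instead tilt only along interior paths with $C^1$ momentum and read the limit off $\log\mathcal{M}^{s,h}(T)$. Indeed the paper's $j^h$ equals $h\cdot F^h(x)-H(x,h)$ exactly, and $F^h=\nabla_w H(x,h)$ makes $h$ the Legendre optimizer, so no duality theorem is needed. You then reach general paths by $I$-density.

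The paper's route is shorter, but it only covers paths that admit a finite continuous tilt. For $x=(1,0,\dots,0)$ the constant path has $L(x,0)=2$, hence finite cost when $p\in(0,1)^N$. Yet $F^h_1(x)=-e^{h_N-h_1}-e^{h_2-h_1}<0$ for every finite $h$, so $A(\bm{x})=\emptyset$, and Proposition~\ref{rate2} as stated would even assign this path infinite cost. Your density step is exactly what handles such boundary paths.

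To make your mixing step precise, note that $L$ is not jointly convex in $(x,v)$, because the mobility $x_i(1-x_{i+1})$ is not concave. So argue edge by edge. Write $L(x,v)=\inf\{\sum_i\ell(\alpha_i(x),\beta_i(x),J_i): v_i=J_{i-1}-J_i\}$, with $\alpha_i=x_i(1-x_{i+1})$, $\beta_i=x_{i+1}(1-x_i)$ and $\ell(\alpha,\beta,J)=\sup_{\Delta}[\Delta J-\alpha(e^{\Delta}-1)-\beta(e^{-\Delta}-1)]$; this $\ell$ is jointly convex in $(\alpha,\beta,J)$. For $x^\epsilon=(1-\epsilon)x+\epsilon\bar{\mathsf{p}}$ one checks that $\alpha_i(x^\epsilon)=(1-\epsilon)\alpha_i(x)+\epsilon r_i$ and $\beta_i(x^\epsilon)=(1-\epsilon)\beta_i(x)+\epsilon r_i$ with the same $r_i\ge0$. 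Since $\ell(r_i,r_i,0)=0$, this gives $L(x^\epsilon,(1-\epsilon)v)\le(1-\epsilon)L(x,v)$ pointwise in time. The same edge-wise convexity, together with continuity of $\ell$ in rates bounded away from zero, handles the time mollification.
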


\subsection{Proof strategy}
Following \cite{kipnis1989hydrodynamics, varadhan2016large}, we prove Theorem \ref{main-theorem} by implementing a strategy based on martingales and tilting. In particular, the proof will be organized as follows: 
\begin{itemize}
\item[(i)]  in Section \ref{sec-upper}, the large deviation upper bound for closed sets is established by using martingales. We
first derive in Proposition \ref{upper-prop} the large deviation upper bound for compact sets by considering an exponential martingale; in a second stage (Proposition \ref{prop-tightness}), we extend the large deviation upper bound to closed sets by showing the exponential tightness property.
\item[(ii)] in Section \ref{sec-lower}, the large deviation lower bound for open sets is established by using the tilted process
$\bm{X}^{s,h}$.
The tilting $h$ is chosen such that
a trajectory which is rare with respect to the path space measure of the original process becomes typical for the tilted process.
To exploit tilting, we first provide a rewriting
of the rate function in terms of relative entropy
in Proposition \ref{rate2}. We then conclude the proof of the lower bound in
Proposition \ref{lower-prop}.
\end{itemize}

\section{Upper bound}
\label{sec-upper}

\begin{proposition}[Upper bound for compact sets]
\label{upper-prop}
For every compact set ${\cal K}\subset D([0,T],\Omega)$ we have
\begin{equation}  
\label{eq:upper-det}
\limsup_{s\to\infty}\frac{1}{s}\log\mathbb{P}_p^s (\bm{X}^s\in {\cal K}) \le -\inf
_{\substack{\bm{x} \in {\cal K}\\ }}
I(\bm{x}). 
\end{equation}
\end{proposition}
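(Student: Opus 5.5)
The plan is to use the exponential martingale associated with the test function $h\cdot x$, combined with the exponential Chebyshev inequality at the initial time. This gives a bound for small neighbourhoods of each path, and compactness then lets us cover $\mathcal K$.

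\textbf{Step 1: the exponential martingale.} Fix $h\in C^1([0,T];\mathbb{R}^{\mathbb{T}_N})$. Because $X^s$ is a Markov jump process with generator $L^s$, the process
\[
\mathcal{M}^{s,h}(t)=\exp\Big\{ s\big[h(t)\cdot X^s(t)-h(0)\cdot X^s(0)\big]-\int_0^t e^{-s h(u)\cdot X^s(u)}(\partial_u+L^s)e^{s h(u)\cdot X^s(u)}\,du\Big\}
\]
is a mean-one positive martingale. A direct computation from \eqref{gen} gives
\[
e^{-s h\cdot x}L^s e^{s h\cdot x}=s\,H(x,h),
\]
with $H$ as in \eqref{hamiltonian}, exactly and not only asymptotically, because the jumps $\pm e_i/s$ multiplied by $s$ produce $e^{h_{i+1}-h_i}$. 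Hence $\mathcal M^{s,h}(T)=\exp\{s\,J_h(\bm X^s)\}$, where
\[
J_h(\bm x)=h(T)\cdot x(T)-h(0)\cdot x(0)-\int_0^T\big(\dot h\cdot x-H(x,h)\big)\,dt
\]
is precisely the functional inside the supremum in \eqref{eq:rate-global}.

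\textbf{Step 2: adding the initial cost.} Fix also $\lambda\in\mathbb{R}^{\mathbb{T}_N}$. Since $\nu^s_p$ is a product of scaled binomials, $\mathbb E_{\nu^s_p}[e^{s\lambda\cdot X(0)}]=e^{s\Lambda(\lambda)}$ with $\Lambda(\lambda)=\sum_i\log(1-p_i+p_ie^{\lambda_i})$. Conditioning on $\mathcal F_0$ and using the martingale property of Step 1,
\[
\mathbb E^s_p\big[\exp\{s(\lambda\cdot X^s(0)-\Lambda(\lambda)+J_h(\bm X^s))\}\big]=1 .
\]
For any Borel set $A$, Chebyshev's inequality then gives
\[
\frac1s\log\mathbb P(\bm X^s\in A)\le-\inf_{\bm x\in A}\big[\lambda\cdot x(0)-\Lambda(\lambda)+J_h(\bm x)\big].
\]

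\textbf{Step 3: local bound and covering.} The map $\bm x\mapsto \lambda\cdot x(0)-\Lambda(\lambda)+J_h(\bm x)$ needs to be continuous, at least at the relevant point, in the Skorokhod topology.
\begin{itemize}
\item The integral term is continuous because $H$ and $\dot h$ are continuous and bounded on $\Omega$, and Skorokhod convergence implies convergence at almost every time with uniform boundedness.
\item The endpoint evaluations $x(0)$ and $x(T)$ are continuous in the Skorokhod topology on $[0,T]$, since time changes fix the endpoints.
\end{itemize}
Given $\delta>0$ and $\bm x\in\mathcal K$, choose $\lambda,h$ so that the functional at $\bm x$ exceeds $\min(I(\bm x),1/\delta)-\delta$. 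This is possible because $I_0$ is the Legendre transform of $\Lambda$, and the suprema over $\lambda$ and over $h$ decouple. Continuity then yields an open neighbourhood $U_{\bm x}$ on which the functional exceeds $\min(I(\bm x),1/\delta)-2\delta$. Cover $\mathcal K$ by finitely many such $U_{\bm x}$ and use $\limsup\frac1s\log\sum_{k=1}^m a_k^s=\max_k\limsup\frac1s\log a_k^s$. Finally let $\delta\to0$ to obtain \eqref{eq:upper-det}.

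\textbf{Main obstacle.} The delicate step is the continuity in Step 3. One has to check that the endpoint evaluations and the time integral are Skorokhod-continuous at every point of $\mathcal K$, not merely at continuous paths. At a path with a jump at an interior time the endpoints are still fine, so only the integral term needs care, and dominated convergence handles it. One must also confirm that the supremum in \eqref{eq:rate-global} genuinely equals $I_{\rm dyn}$ when $I_{\rm dyn}=+\infty$, which holds by definition since we work with \eqref{eq:rate-global} directly. The exact identity in Step 1 is the key simplification: it means no superexponential estimate is required.
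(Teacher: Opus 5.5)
Your proposal is correct, up to one sign slip described next, and follows the paper's route: the exponential martingale for $e^{s h\cdot x}$, an exponential tilt $\lambda$ of the binomial initial law, Chebyshev's inequality, and the passage from $\inf_{h,\lambda}\sup_{\mathcal O}$ to $\sup_{\mathcal K}\inf_{h,\lambda}$. The paper gets that last step by citing Lemma~3.3 of Appendix~2 in Kipnis--Landim, whereas you prove it by the finite covering argument and also verify the Skorokhod continuity of $\bm x\mapsto \lambda\cdot x(0)+J_h(\bm x)$ that the lemma needs.

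The slip is shared with the paper's \eqref{eq:rate-global} and \eqref{Ih}: since $e^{-sh\cdot x}L^s e^{sh\cdot x}=sH(x,h)$, your own Step~1 actually gives $\mathcal M^{s,h}(T)=\exp\{s[h(T)\cdot X^s(T)-h(0)\cdot X^s(0)-\int_0^T(\dot h\cdot X^s+H(X^s,h))\,dt]\}$, so $J_h$ must carry $+H$ inside the integral. This is the only choice that integrates by parts to $\int_0^T(h\cdot\dot x-H(x,h))\,dt$, and with it your argument goes through unchanged.
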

\begin{proof}
We use multiplicative functionals. Given
$f:\mathbb R_+\times \Omega^s\to \mathbb R$, the following is a martingale \cite{ethier2009markov}:
\be
M^s(t)
=
\frac{f(t,{X}^s(t))}{f(0,{X}^s(0))}
\exp\left\{
-\int_0^t
\frac{(\partial_u+L^s)f(u,{X}^s(u))}{f(u,{X}^s(u))}\,du
\right\}.
\ee
Therefore, for any vector function
$h\in C^1([0,T];\mathbb R^{\mathbb T_N})$, by choosing
\be
f(t,x)=\exp\left\{s\sum_{i\in\mathbb T_N}h_i(t)x_i\right\},
\ee
we get the exponential martingale
$\bm{Z}^{s,h}=\{Z^{s,h}(t):0\le t\le T\}$ defined by
\be
\label{Z_T}
Z^{s,h}(T)=\exp\{s I_{\rm dyn}^h(\bm{X}^s)\},
\ee
where
\be
\label{Ih}
I_{\rm dyn}^h(\bm{x})
=
h(T)\cdot {x}(T)-h(0)\cdot {x}(0)
-\int_0^T
\left[
\dot h(u)\cdot {x}(u)-H({x}(u),h(u))
\right]du
\ee
and $H(x,w)$ is defined in \eqref{hamiltonian}.
We start by considering an open set
${\cal O}\subset D([0,T],\Omega)$. In order to include the cost of the
random initial condition, we also fix $a\in \mathbb R^{\mathbb T_N}$ and write
\be
\mathbb{P}_p^s(\bm{X}^s\in {\cal O})
=
\mathbb{E}_p^s\left[
\mathbf 1_{\{\bm{X}^s\in {\cal O}\}}
Z^{s,h}(T)e^{s a\cdot {X}^s(0)}
\cdot \frac{e^{-s a\cdot {X}^s(0)}}{Z^{s,h}(T)}
\right].
\ee
Plugging \eqref{Z_T} in the denominator, we obtain
\be
\mathbb{P}_p^s(\bm{X}^s\in {\cal O})
\le
\mathbb{E}_p^s
\left[
Z^{s,h}(T)e^{s a\cdot {X}^s(0)}
\right]
\exp\left\{
s\sup_{\bm{x}\in O}
\left[
-I_{\rm dyn}^h(\bm{x})-a\cdot {x}(0)
\right]
\right\}.
\ee
Since $\bm{Z}^{s,h}$ is a mean-one martingale conditionally on the initial
configuration, 
\be
\mathbb{E}_p^s
\left[
Z^{s,h}(T)e^{s a\cdot {X}^s(0)}
\right]
=
\mathbb{E}_{\nu_p^s}
\left[
e^{s a\cdot {X}^s(0)}
\right].
\ee
Under $\nu_p^s$, the variables $s{X}_i^s(0)$ are independent binomial random
variables with parameters $(s,p_i)$. Hence
\be
\mathbb{E}_{\nu_p^s}
\left[
e^{s a\cdot {X}^s(0)}
\right]
=
\prod_{i\in\mathbb T_N}
\left(1-p_i+p_i e^{a_i}\right)^s
=
\exp\left\{
s\sum_{i\in\mathbb T_N}
\log(1-p_i+p_i e^{a_i})
\right\}.
\ee
Therefore
\be
\frac1s\log \mathbb{P}_p^s(\bm{X}^s\in {\cal O})
\le
\sup_{\bm{x}\in {\cal O}}
\left[
-I_{\rm dyn}^h(\bm{x})-a\cdot {x}(0)
+\sum_{i\in\mathbb T_N}\log(1-p_i+p_i e^{a_i})
\right].
\ee
Since this bound holds for every $h$ and every $a$, we get
\be
\limsup_{s\to\infty}
\frac1s\log \mathbb{P}_p^s(\bm{X}^s\in {\cal O})
\le
\inf_{\substack{h\in C^1([0,T];\mathbb R^{\mathbb T_N})\\
a\in\mathbb R^{\mathbb T_N}}}
\sup_{\bm{x}\in O}
\left[
-I_{\rm dyn}^h(\bm{x})-a\cdot {x}(0)
+\sum_{i\in\mathbb T_N}\log(1-p_i+p_i e^{a_i})
\right].
\ee

We now use Lemma 3.3 in Appendix 2 of \cite{kipnis2013scaling}, which allows one
to obtain a large deviation upper bound for compact sets from the previous
open-set bound by replacing the $\inf\sup$ by $\sup\inf$. Thus, for every compact
set ${\cal K}\subset D([0,T],\Omega)$,
\be
\begin{aligned}
\limsup_{s\to\infty}
\frac1s\log \mathbb{P}_p^s(\bm{X}^s\in {\cal K})
&\le~~ 
\sup_{\bm{x}\in {\cal K}}
\inf_{\substack{h\in C^1([0,T];\mathbb R^{\mathbb T_N})\\
a\in\mathbb R^{\mathbb T_N}}}
\left[
-I_{\rm dyn}^h(\bm{x})-a\cdot {x}(0)
+\sum_{i\in\mathbb T_N}\log(1-p_i+p_i e^{a_i})
\right]  \\
&\quad=
-\inf_{\bm{x}\in {\cal K}}
\sup_{\substack{h\in C^1([0,T];\mathbb R^{\mathbb T_N})\\
a\in\mathbb R^{\mathbb T_N}}}
\left[
I_{\rm dyn}^h(\bm{x})+a\cdot {x}(0)
-\sum_{i\in\mathbb T_N}\log(1-p_i+p_i e^{a_i})
\right].
\end{aligned}
\ee
The supremum separates into the dynamical and static parts: 
on one hand, recalling
the definitions 
\eqref{eq:rate-global}
and \eqref{Ih}, we have
\be
\sup_{h} I_{\rm dyn}^h(\bm{x})=I_{\rm dyn}(\bm{x}).
\ee
On the other hand,
recalling the definition
\eqref{eq:initial-cost}
we have
\be
\sup_{a\in\mathbb R^{\mathbb T_N}}
\left\{
a\cdot {x}(0)
-\sum_{i\in\mathbb T_N}\log(1-p_i+p_i e^{a_i})
\right\}
=
I_0({x}(0)).
\ee
Consequently,
\be
\limsup_{s\to\infty}
\frac1s\log \mathbb{P}_p^s(\bm{X}^s\in {\cal K})
\le
-\inf_{\bm{x}\in {\cal K}}
\left[
I_{\rm dyn}(\bm{x})+I_0({x}(0))
\right] = -\inf_{\bm{x}\in {\cal K}}I(\bm{x}).
\ee
\end{proof}
To extend the large deviation upper bound 
from compact sets to closed sets it is enough to prove exponential tightness of the measure $\mathbb{P}^s_{p}$. This is the context of the next proposition.
Before proving exponential tightness, let us point out that compact containment
is automatic here, since \(\Omega=[0,1]^{\mathbb T_N}\) is compact. For the
non-compact conservative models considered in Section~7, the same role is
played by conservation of the total mass or total energy, which confines the
process to a compact simplex. The proof below therefore focuses on the second
ingredient in path-space exponential tightness, namely the exponential control
of time oscillations; see also \cite[Theorem~4.4]{FengKurtz2006}.

\begin{proposition}[Exponential tightness]
\label{prop-tightness}
For any $n\in\N$ there exists a compact set ${\cal K}_n\subset D([0,T],\Omega)$ such that
\be
\label{tight:eq}
\limsup_{s\to\infty} \frac{1}{s} \ln\mathbb{P}^s_{p}(\bm{X}^s\notin{\cal K}_n) \le -n
\ee
\end{proposition}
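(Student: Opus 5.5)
Since $\Omega=[0,1]^{\mathbb T_N}$ is compact, I will use the standard characterization of relatively compact sets in $D([0,T],\Omega)$ through the modulus of continuity
\[
w'_{\bm x}(\delta)=\inf_{\{t_k\}}\max_k\sup_{t,u\in[t_{k-1},t_k)}|x(t)-x(u)|,
\]
or, more simply, the uniform modulus $w_{\bm x}(\delta)=\sup_{|t-u|\le\delta}|x(t)-x(u)|$, which dominates $w'$. Here the infimum runs over partitions of $[0,T]$ with mesh larger than $\delta$. Compact containment is automatic, so it suffices to prove the following: for every $\epsilon>0$,
\begin{equation}
\label{osc-plan}
\lim_{\delta\to0}\limsup_{s\to\infty}\frac1s\log\mathbb P^s_p\Big(\sup_{|t-u|\le\delta,\ t,u\in[0,T]}|X^s(t)-X^s(u)|>\epsilon\Big)=-\infty .
\end{equation}
Given \eqref{osc-plan}, I will choose for each $n$ and each $k\in\N$ a $\delta_k$ such that the probability above, with $\epsilon=1/k$, is at most $e^{-s(n+k)}$ for $s$ large. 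The set ${\cal K}_n$ is then the closure of $\bigcap_k\{\bm x: w_{\bm x}(\delta_k)\le 1/k\}$, which is compact by Arzel\`a--Ascoli (its elements are even continuous). A union bound over $k$ gives \eqref{tight:eq}. There is a small technical issue with uniformity in $s$ for finitely many small $s$, and this is handled in the usual way by enlarging ${\cal K}_n$ to include finitely many additional compact sets.

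To prove \eqref{osc-plan}, I will split $[0,T]$ into $\lceil T/\delta\rceil$ intervals $[t_k,t_k+2\delta]$. The key point is that the oscillation on each such interval, for each coordinate $i$ and each sign, is controlled by the exponential martingale already introduced in the proof of Proposition~\ref{upper-prop}, this time with a constant tilt $h\equiv \lambda e_i$ ($\lambda\in\R$) and with time restarted at $t_k$. Concretely,
\[
Z(t)=\exp\Big\{s\lambda\big(X^s_i(t)-X^s_i(t_k)\big)-s\int_{t_k}^t H(X^s(u),\lambda e_i)\,du\Big\}
\]
is a mean-one martingale. Since $x\in[0,1]^{\mathbb T_N}$, we have $|H(x,\lambda e_i)|\le C(e^{|\lambda|}+1)=:C_\lambda$ uniformly. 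Doob's maximal inequality applied to the submartingale $Z$ then gives
\[
\mathbb P^s_p\Big(\sup_{t\in[t_k,t_k+2\delta]}\lambda\big(X^s_i(t)-X^s_i(t_k)\big)>\epsilon/4\Big)\le \exp\{-s(\lambda\epsilon/4-2\delta C_\lambda)\}.
\]
Here I take $\lambda>0$ and repeat with $-\lambda$ for the other sign. A union bound over $k$, $i$ and the two signs multiplies this by a factor $4N\lceil T/\delta\rceil$, which is subexponential in $s$. This yields
\[
\limsup_{s}\frac1s\log\mathbb P(\cdots)\le-(\lambda\epsilon/4-2\delta C_\lambda).
\]
I then send $\delta\to0$ first, obtaining $-\lambda\epsilon/4$, and finally $\lambda\to\infty$, which gives \eqref{osc-plan}.

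The main obstacle I anticipate is the bookkeeping that converts the grid-based control into control of the full modulus $w_{\bm x}(\delta)$. Any pair $t,u$ with $|t-u|\le\delta$ lies in a common interval $[t_k,t_k+2\delta]$. Therefore $|X^s(t)-X^s(u)|\le 2\max_i\sup_{[t_k,t_k+2\delta]}|X^s_i-X^s_i(t_k)|$, up to a norm-equivalence constant depending only on $N$, and this is absorbed by rescaling $\epsilon$. A second point is the martingale property of $Z$ with the constant function $f=e^{s\lambda x_i}$. It follows directly from the formula in the proof of Proposition~\ref{upper-prop}, using the identity $L^s f/f=sH(x,\lambda e_i)$, which holds exactly at finite $s$ because the jumps are of size $1/s$ and the rates carry a factor $s$. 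Everything else is routine.
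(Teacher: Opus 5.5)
Your oscillation estimate (the $\lim_{\delta\to0}\limsup_s=-\infty$ bound for the uniform modulus) is correct and is essentially the paper's first step. The paper tilts along an arbitrary direction $w\in\mathbb R^N$ and restarts the martingale on a grid via the Markov property. You tilt along $\pm\lambda e_i$ on overlapping windows $[t_k,t_k+2\delta]$, which is equivalent.

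\textbf{The gap is in your construction of ${\cal K}_n$.} You take the closure of $\bigcap_k\{\bm x: w_{\bm x}(\delta_k)\le 1/k\}$ with the \emph{uniform} modulus, and, as you note, its elements are continuous. Since $C([0,T],\Omega)$ is closed in the Skorokhod topology, this gives ${\cal K}_n\subset C([0,T],\Omega)$. But $\bm X^s$ is a pure jump process, with jumps of size of order $1/s$ occurring at rate of order $s$. So for non-degenerate $p$,
\[
\mathbb P^s_p(\bm X^s\notin{\cal K}_n)\ \ge\ \mathbb P^s_p(\bm X^s \text{ jumps at least once in } [0,T])\ \longrightarrow\ 1,
\]
and \eqref{tight:eq} fails for your set. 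In your union bound this shows up as follows. For \emph{every} $s$, each term with $1/k$ below the jump size satisfies $\mathbb P^s_p(w_{\bm X^s}(\delta_k)>1/k)\ge \mathbb P^s_p(\text{a jump occurs})$, however small $\delta_k$ is. So the problem is not confined to finitely many small $s$, and adding finitely many compact sets does not repair it.

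\textbf{The repair.} Keep the uniform modulus in the probabilistic estimate, but use the Skorokhod modulus $w'$ in the definition of the compact set. Let ${\cal K}_n$ be the closure of $\{\bm x: w'_{\bm x}(\delta_k)\le 1/k \ \forall k\}$. This set is compact in $D([0,T],\Omega)$ by the Skorokhod-space version of Arzel\`a--Ascoli: paths take values in the compact $\Omega$, and $\sup_{\bm x}w'_{\bm x}(\delta)\to0$ over the set.

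Since $w'_{\bm x}(\delta)\le w_{\bm x}(2\delta)$, your estimate gives $\mathbb P^s_p(w'_{\bm X^s}(\delta_k)>1/k)\le e^{-s(n+k)}$ for $s\ge s_0(k)$. For the finitely many $s<s_0(k)$, shrink $\delta_k$ using that $w'_{\bm x}(\delta)\to0$ as $\delta\to0$ for every c\`adl\`ag path. This is exactly the property that fails for $w$ along paths with jumps. The union bound over $k$ then holds for all $s$ and yields \eqref{tight:eq}. This is the content of the ``standard tightness criterion in the Skorokhod space'' that the paper's second step invokes without detail.
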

\begin{proof}
We adapt the approach in \cite{kipnis2013scaling}, pag. 271. Namely, we proceed in two steps: in the first step
it is proved that for all vectors ${w}\in\R^N$
and for all $\epsilon>0$
\be
\label{step1}
\lim_{\delta\to 0} \limsup_{s\to\infty} \frac{1}{s} \ln\mathbb{P}_{p}^s(\sup_{|t-t'|\le \delta}|{X}^{s}(t)\cdot {w} -{X}^s(t')\cdot {w}|>\epsilon) = -\infty.
\ee
In the second step, the compact set ${\cal K}_n$ is constructed explicitly.

\paragraph{First step.} Since
\be
\left\{\sup_{|t-t'|\le \delta}|{X}^{s}(t)\cdot {w} -{X}^s(t')\cdot {w}|>\epsilon \right\}
\subset
\bigcup_{k=0}^{\lceil T\delta^{-1} \rceil}
\left\{\sup_{k\delta\le t<(k+1)\delta}|{X}^{s}(t)\cdot {w} -{X}^s(k \delta)\cdot {w}|>\frac{\epsilon}{4} \right\}
\ee
it follows that
\begin{eqnarray}
&&\lim_{\delta\to 0} \limsup_{s\to\infty} \frac{1}{s} \ln\mathbb{P}_{p}^s\left(\sup_{|t-t'|\le \delta}|{X}^{s}(t)\cdot {w} -{X}^s(t')\cdot {w}|>\epsilon\right)\nn\\
&&\le
\lim_{\delta\to 0} \limsup_{s\to\infty} \frac{1}{s} \ln 
\sum_{k=0}^{\lceil T\delta^{-1} \rceil}
\mathbb{P}_{p}^s\left(\sup_{k\delta\le t<(k+1)\delta}|{X}^{s}(t)\cdot {w} -{X}^s(k\delta)\cdot {w}|>\frac{\epsilon}{4}\right)\nn\\
&&
\le
\lim_{\delta\to 0}\limsup_{s\to\infty}
\frac1s \ln
\left[
\left\lceil T\delta^{-1} \right\rceil
\sup_{y\in \Omega^s}
\mathbb{P}^s\Big(
\sup_{0\le t<\delta}
|X^s(t)\cdot w - X^s(0)\cdot w| > \epsilon/4
\,\Big|\, X^s(0)=y
\Big)
\right],\nn\\
&&
\end{eqnarray}
where we used the Markov property and introduced, for $y\in\Omega^s$,
the notation $\mathbb{P}^s(\cdot \mid X^s(0)=y)$ for the law of the process
started from the deterministic configuration $y$.
It remains to estimate, uniformly in $y\in\Omega^s$,
\be
\mathbb{P}^s\Big(
\sup_{0\le t<\delta}
|X^s(t)\cdot w - X^s(0)\cdot w| > \epsilon
\,\Big|\, X^s(0)=y
\Big).
\ee
Now, since we are considering an arbitrary vector ${w}\in\R^N$, we can neglect the absolute value. 
Moreover, for any $a > 0$, consider the mean 1 exponential martingale
\begin{eqnarray}
Z^{a,w}(t) 
&=& 
\exp\Big\{s
\Big[
a {X}^s(t) \cdot w - a  {X}^s(0)\cdot w 
- A^{a,w}(t)\Big]\Big\}
\end{eqnarray}
where
\be
A^{a,w}(t) 
=
\int_{0}^t\sum_{i\in\mathbb{T}_N}\Big({X}^s_i(u)(1-{X}^s_{i+1}(u)) (e^{a(w_{i+1}-w_i)}-1) + {X}^s_{i+1}(u)(1-{X}^s_i(u)) (e^{a(w_i-w_{i+1})}-1)\Big)du.
\ee
Then we have
\begin{eqnarray}
&&\mathbb{P}^s(\sup_{0\le t<\delta}\left({X}^{s}(t)\cdot {w} -{X}^s(0)\cdot {w}\right)>{\epsilon}\,\Big|\, X^s(0)=y)\nn\\
&&=
\mathbb{P}^s(\sup_{0\le t<\delta}\left(a{X}^{s}(t)\cdot {w} -a{X}^s(0)\cdot {w}\right)>{a\epsilon}\,\Big|\, X^s(0)=y)\nn\\
&&=
\mathbb{P}^s\left(\sup_{0\le t<\delta}
\left(\frac{1}{s}\ln Z^{a,w}(t) + A^{a,w}(t)\right)
>a\epsilon\,\Big|\, X^s(0)=y\right)\nn\\
&&\le
\mathbb{P}^s\left(\sup_{0\le t<\delta}
\frac{1}{s}\ln Z^{a,w}(t)
>{\frac{a\epsilon}{2}}\,\Big|\, X^s(0)=y\right)
+
\mathbb{P}^s\left(\sup_{0\le t<\delta}
A^{a,w}(t)
>{\frac{a\epsilon}{2}}\,\Big|\, X^s(0)=y\right)\nn\\
& &
\end{eqnarray}
By Doob’s inequality and the fact that $Z^{a,w}(t)$ is a mean-one martingale
for every deterministic initial condition $y$, we have
\begin{eqnarray}
&&\mathbb{P}^s\left(\sup_{0\le t<\delta}
\frac{1}{s}\ln Z^{a,w}(t)
>{\frac{a\epsilon}{2}}\,\Big|\, X^s(0)=y\right)\nn\\
&&=
\mathbb{P}^s\left(\sup_{0\le t<\delta}
Z^{a,w}(t)
>e^{s\frac{a\epsilon}{2}}\,\Big|\, X^s(0)=y\right)\nn\\
&&\le
\mathbb{E}^s[Z^{a,w}(\delta)\,\Big|\, X^s(0)=y] e^{-s\frac{a\epsilon}{2}} = e^{-s\frac{a\epsilon}{2}},
\end{eqnarray}
Furthermore, since $X_i^s(t) \in [0,1]$ there exists a constant $C^{a,{w}}$ such  that 
\be
\sup_{0\le t<\delta} A^{a,w}(t) \le C^{a,{w}} \delta
\ee
Thus we conclude
\be
\lim_{\delta\to 0} \limsup_{s\to\infty} \frac{1}{s} \ln \sup_{y\in\Omega^s}\mathbb{P}^s(\sup_{0\le t<\delta}|{X}^{s}(t)\cdot {w} -{X}^s(0)\cdot {w}|>{\epsilon}\,\Big|\, X^s(0)=y) \le -\frac{a\epsilon}{2}
\ee
Since $a$ is arbitrary, this proves \eqref{step1}.

\paragraph{Second step.}
The construction of the compact set ${\cal K}_n$ follows the standard argument in
\cite{kipnis2013scaling}, pag. 272, which we briefly recall for completeness.
From \eqref{step1}, we have obtained an exponential control of the modulus of continuity of the
projections $X^s(\cdot)\cdot w$, uniformly in $w\in\mathbb{R}^N$. More precisely, for every
$\epsilon>0$,
\be
\lim_{\delta\to 0}\limsup_{s\to\infty}
\frac1s \log
\mathbb{P}_p^s\Big(
\sup_{|t-t'|\le\delta}
|X^s(t)\cdot w - X^s(t')\cdot w| > \epsilon
\Big) = -\infty.
\ee
Since this holds for all $w\in\mathbb{R}^N$, it implies an exponential control of the modulus
of continuity of the process $X^s(\cdot)$ itself in $\mathbb{R}^N$.
On the other hand, the state space $\Omega = [0,1]^{\mathbb{T}_N}$ is compact. Therefore,
the compact containment condition is automatically satisfied:
\be
\mathbb{P}_p^s\big( X^s(t)\in \Omega \ \forall t\in[0,T] \big) = 1.
\ee
Combining these two facts, we can apply the standard tightness criterion in the Skorokhod
space $D([0,T],\Omega)$ (see \cite{kipnis2013scaling}), which ensures the existence of a sequence of
compact sets $({\cal K}_n)_{n\in\mathbb{N}}$ such that
\be
\limsup_{s\to\infty}
\frac1s \log \mathbb{P}_p^s(X^s \notin {\cal K}_n) \le -n.
\ee

\end{proof}

\section{Lower bound}
\label{sec-lower}

To prove the lower bound we first provide an alternative representation of the rate function. 
\begin{proposition}
[Rate function via relative entropy]
\label{rate2}
The dynamical part of the rate function \eqref{eq:rate-global} admits the representation 
\be
\label{rate-fct-dem2}
I_{\mathrm{dyn}}(\bm{x})= 
\begin{cases}
\inf_{h\in A({\bm{x}})} 
\left\{\liminf_{s\to\infty}
\frac{1}{s}
\mathscr{H}(\mathbb{P}^{s,h}(\cdot\,|\,X^s(0)=x^s)\,||\, \mathbb{P}^s(\cdot\,|\,X^s(0)=x^s)\right\}
\qquad& \text{ if } A({\bm{x}}) \neq \emptyset\\
+\infty & \text{ otherwise.}
\end{cases}
\ee
where $\mathscr{H}(\cdot||\, \cdot)$ denotes the relative entropy between path measures and 
\be
A({\bm{x}}) = \{h\in C([0,T];\mathbb{R}^{\mathbb{T}_N}) \,|\, \dot{\bm{x}} = F^{h}(\bm{x})\},
\ee
the function $F^{h}$ being defined in \eqref{vector-b}.
The sequence of configurations $(x^s)_{s\in\N}$ is such that $x^s\in\Omega^s$ and $x^s\to x(0)$ as $s\to\infty$, with $x(0)$ the initial configuration of the trajectory $\bm{x} \in D([0,T],\Omega)$.
\end{proposition}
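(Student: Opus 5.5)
The plan is to compute the relative entropy explicitly via Girsanov's formula for pure jump processes, pass to the limit $s\to\infty$ using Theorem \ref{th-lln-gen}, and then identify the infimum over $A(\bm{x})$ with the Legendre transform defining $I_{\mathrm{dyn}}$.

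\emph{Step 1 (Radon--Nikodym derivative).} For $h$ continuous, both $\mathbb{P}^{s,h}(\cdot\,|\,X^s(0)=x^s)$ and $\mathbb{P}^s(\cdot\,|\,X^s(0)=x^s)$ are laws of jump processes with the same jumps and mutually absolutely continuous rates. Girsanov's formula gives the density
\[
\log\frac{d\mathbb{P}^{s,h}}{d\mathbb{P}^{s}}
=\sum_{\text{jumps } i\to i\pm1 \text{ at } \tau}\bigl(h_{i\pm1}(\tau)-h_i(\tau)\bigr)
- s\int_0^T \sum_i\Big[x_i(1-x_{i+1})(e^{h_{i+1}-h_i}-1)+x_{i+1}(1-x_i)(e^{h_i-h_{i+1}}-1)\Big]dt .
\]
The second term is exactly $-s\int_0^T H(X^s(t),h(t))\,dt$. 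Taking the $\mathbb{P}^{s,h}$-expectation and compensating the jump sum with the tilted rates yields
\[
\frac1s\mathscr H=\mathbb{E}^{s,h}\int_0^T\Big[\sum_i\big(a^h_i(X^s)(h_{i+1}-h_i)+b^h_i(X^s)(h_i-h_{i+1})\big)-H(X^s,h)\Big]dt ,
\]
where $a^h_i,b^h_i$ are the tilted rates divided by $s$. The integrand equals $h\cdot F^h(X^s)-H(X^s,h)$ after summation by parts.

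\emph{Step 2 (limit $s\to\infty$).} The integrand is a bounded continuous function of $(t,X^s(t))$, since $h$ is bounded on $[0,T]$ and $\Omega$ is compact. Theorem \ref{th-lln-gen} with deterministic initial data $x^s\to x(0)$ gives $X^{s,h}\to\bm{x}$ uniformly in probability whenever $h\in A(\bm{x})$. Bounded convergence then shows that the $\liminf$ is a genuine limit:
\[
\lim_{s\to\infty}\frac1s\mathscr H=\int_0^T\big[h\cdot\dot x-H(x,h)\big]dt .
\]

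\emph{Step 3 (identification with $I_{\mathrm{dyn}}$).} We have $\dot x=F^h(x)=\nabla_w H(x,h)$. Since $H(x,\cdot)$ is convex, $h(t)$ attains the supremum in the Lagrangian \eqref{eq:Lagrangian}, so the integrand equals $L(x,\dot x)$. This holds for every $h\in A(\bm{x})$, so the infimum equals $\int_0^T L(x,\dot x)\,dt=I_{\mathrm{dyn}}(\bm{x})$ by \eqref{eq:rate-dyn}; in particular $\bm{x}$ is absolutely continuous in this case. Conversely, suppose $A(\bm{x})=\emptyset$. We must show that $I_{\mathrm{dyn}}(\bm{x})=+\infty$, or else restrict the statement to paths for which a continuous maximiser exists.

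\emph{Main obstacle.} The main obstacle is this last converse. The gradient map $w\mapsto\nabla_wH(x,w)$ is onto the set of conservative velocities only where the bond rates $x_i(1-x_{i+1})$, $x_{i+1}(1-x_i)$ are positive. Paths touching the boundary of $\Omega$ or with $\sum_i\dot x_i\neq0$ may have finite or infinite action without a continuous maximiser $h$. My first attempt would be to show:
\begin{enumerate}
\item non-conservative paths give $I_{\mathrm{dyn}}=\infty$, by taking $h$ constant along $(1,\dots,1)$;
\item for interior conservative absolutely continuous paths, the equation $\dot x=\nabla_wH(x,h)$ is solvable by strict convexity of $H$ on the mean-zero subspace, with $h$ measurable and continuous if $\dot x$ is;
\item finite-action paths where $A(\bm{x})=\emptyset$ are handled by convention, accepting that the equality is only claimed as stated, with $+\infty$ on the complement.
\end{enumerate}
The delicate point is that finite-action paths with $A(\bm{x})=\emptyset$ would violate the formula. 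I would therefore state the identity precisely on the set where $A(\bm{x})$ is nonempty, and defer density and approximation arguments to the lower bound proof (Proposition \ref{lower-prop}).
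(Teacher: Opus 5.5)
Your Steps 1 and 2 are the paper's argument. It uses Girsanov's formula for the jump process and compensates the jump term under the tilted law. It then applies Theorem~\ref{th-lln-gen} with deterministic initial data and uses boundedness of the integrand on $[0,T]\times\Omega$. Your integrand $h\cdot F^h(x)-H(x,h)$ is exactly the paper's $j^h$, since $\sum c^h\log(c^h/c)=h\cdot F^h$ and $\sum(c-c^h)=-H(x,h)$.

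The difference is in Step 3. The paper passes from $\int_0^T j^h\,dt$ to $I_{\mathrm{dyn}}$ by citing a general convex-duality identity (Theorem 5.26 of Shwartz--Weiss), $\sup_h \ell^h=\inf_{h:\,\dot x=F^h(x)} j^h$. You instead observe that $F^h(x)=\nabla_w H(x,h)$. So every $h\in A(\bm{x})$ is a critical point, hence a maximiser, of the concave map $w\mapsto w\cdot\dot x-H(x,w)$. By the equality case of Fenchel--Young, $j^h(t,x(t))=L(x(t),\dot x(t))$ pointwise. Your route is self-contained and gives slightly more: the functional is constant on $A(\bm{x})$, so every admissible tilt attains the infimum. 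Your only external input is \eqref{eq:rate-dyn}, which applies because $h\in A(\bm{x})$ forces $\bm{x}$ to be $C^1$.

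Your hesitation about the clause ``$A(\bm{x})=\emptyset\Rightarrow I_{\mathrm{dyn}}(\bm{x})=+\infty$'' is justified, and the paper's proof does not close it either. The cited duality identity tacitly assumes the constraint set is nonempty. In fact the clause is false as stated.

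\emph{Boundary example.} Take a constant path $x(t)\equiv y$ with $y_i=0$ and $y_{i-1},y_{i+1}>0$. Then $F^h_i(y)=y_{i-1}e^{h_i-h_{i-1}}+y_{i+1}e^{h_i-h_{i+1}}>0$ for every finite $h$. Hence $A(\bm{x})=\emptyset$, even if measurable tilts are allowed. Yet $H(y,w)\ge-\sum_i\bigl(y_i(1-y_{i+1})+y_{i+1}(1-y_i)\bigr)$, which gives $L(y,0)<\infty$ and so $I_{\mathrm{dyn}}(\bm{x})<\infty$.

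\emph{Interior example.} Take $x(t)=y+\min(t,T/2)\,v$ with $y\in(0,1)^{\mathbb{T}_N}$, $0\neq v$ small and $\sum_i v_i=0$. This path has finite action, since all bond rates stay bounded away from zero. But $\dot x$ jumps at $T/2$, whereas $t\mapsto F^{h(t)}(x(t))$ is continuous for continuous $h$.

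So the correct statement is your restricted one, with $A(\bm{x})\neq\emptyset$. The missing case has to be handled in the lower bound by an approximation ($I$-density) argument: approximate finite-cost paths by interior $C^1$ paths with nonempty $A$ and converging cost. Proposition~\ref{lower-prop} omits this step as written, because it assumes that a tilt with $\dot{\bm{x}}=F^h(\bm{x})$ exists for every $\bm{x}$.
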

\begin{proof}
Recall the definition of the relative entropy:
\be
\mathscr{H}\big( \mathbb{P}^{s,h}(\cdot \mid X^s(0)=x^s) \,\|\, \mathbb{P}^s(\cdot \mid X^s(0)=x^s) \big)
=
\mathbb{E}^{s,h}\!\left[
\log \frac{d\mathbb{P}^{s,h}}{d\mathbb{P}^s}
\;\middle|\; X^s(0)=x^s
\right]
\ee
where the Radon--Nikodym derivative is understood between the two path
measures conditioned on the same initial configuration.
We compute the Radon--Nikodym derivative using Girsanov formula. Denoting 
\be
c_{i,j}(x)=x_i(1-x_j),\qquad\qquad c^{h}_{i,j}(t,x)=x_i(1-x_j)e^{h_j(t)-h_i(t)},
\ee
we obtain
\be
\log \frac{d\mathbb{P}^{s,h}}{d\mathbb{P}^s}
=
\sum_{i\in\mathbb{T}_N}
\Bigg(
\int_0^T
\log\!\left(\frac{c^h_{i,i+1}(t,X^s(t^-))}{c_{i,i+1}(X^s(t^-))}\right)
\, dJ^s_{i,i+1}(t)
+
\int_0^T s \big( c_{i,i+1}(X^s(t^-)) - c^h_{i,i+1}(t,X^s(t^-)) \big)\,dt
\nn\ee
\be
\qquad\qquad\qquad
+
\int_0^T
\log\!\left(\frac{c^h_{i+1,i}(t,X^s(t^-))}{c_{i+1,i}(X^s(t^-))}\right)
\, dJ^s_{i+1,i}(t)
+
\int_0^T s \big( c_{i+1,i}(X^s(t^-)) - c^h_{i+1,i}(t,X^s(t^-)) \big)\,dt
\Bigg).
\nn
\ee
\be
\ee
Here $X^s(t-)$ denotes the left limit and $J^s_{i,j}(t)$ is the number of jumps from site $i$ to site $j$ up to time $t$. 
Taking expectation under $\mathbb{P}^{s,h}(\cdot \mid X^s(0)=x^s)$ and using 
\be
\mathbb{E}^{s,h}\!\left[ dJ^s_{i,j}(t) \mid \mathcal{F}(t^-) \right]
=
s c^h_{i,j}(t,X^s(t^-))\,dt,
\ee
we get
\begin{eqnarray}
\label{eq:xxx}
\mathscr{H}\big( \mathbb{P}^{s,h}(\cdot \mid X^s(0)=x^s) \,\|\, \mathbb{P}^s(\cdot \mid X^s(0)=x^s) \big)
=
s \mathbb{E}^{s,h}\Bigg(
\int_0^T {j}^h(t,X^s(t))\,dt
\;\Big|\; X^s(0)=x^s
\Bigg)
\end{eqnarray}
where 
\begin{eqnarray}
j^{h}(t,{x}(t)) 
&=&
\sum_{i\in\mathbb{T}_N}\Bigg \{c^{h}_{i,i+1}(t,x(t))\log\Big(\frac{c^{h}_{i,i+1}(t,x(t))}{{c}_{i,i+1}(x(t))}\Big)+   {c}_{i,i+1}(x(t))-{c}^{h}_{i,i+1}(t,x(t))\nn\\
&&
\hspace{1.cm}  + c^{h}_{i+1,i}(t,x(t))\log\Big(\frac{c^{h}_{i+1,i}(t,x(t))}{{c}_{i+1,i}(x(t))}\Big)+ {c}_{i+1,i}(x(t))-{c}^{h}_{i+1,i}(t,x(t))\Bigg\}.
\end{eqnarray}
Let $h \in A(\bm{x})$, i.e. $\dot{\bm{x}}=F^h(\bm{x})$. Since $x^s \to x(0)$, by Theorem \ref{th-lln-gen} we have that
$
\bm{X}^s(\cdot) \to \bm{x}(\cdot)
$
in $\mathbb{P}^{s,h}(\cdot \mid X^s(0)=x^s)$-probability.
Since the function $j^h$ is continuous and bounded on $[0,T]\times\Omega$, it follows that
\be
\lim_{s\to\infty}
\frac{1}{s}
\mathscr{H}\big( \mathbb{P}^{s,h}(\cdot \mid X^s(0)=x^s) \,\|\, \mathbb{P}^s(\cdot \mid X^s(0)=x^s) \big)
=
\int_0^T {j}^h(t,x(t))\,dt
\ee
The expression obtained above is local in time and depends only on the instantaneous configuration.
Therefore, one can identify the
variational representation of the dynamical rate function given in \eqref{rate-fct-dem2}
 using a standard convex duality arguments (see Theorem 5.26 in \cite{shwartz1995large}), i.e.
\be
\label{convex-duality}
\sup_{{h}} {\cal \ell}^{{h}}(t,x(t),\dot{x}(t)) = 
\inf_{{h} \,:\, \dot{x}(t) = F^{{h}}({x}(t))} {j}^{{h}}(t,{x}(t))
\ee
where
\begin{eqnarray}
&&{\cal \ell}^{h}(t,{x}(t),\dot{x}(t)) \nn\\
&&= 
\sum_{i\in\mathbb{T}_N}
\Big[
{{h}_i(t)} \dot{{x}_i}(t) -
{x}_i(t)(1-{x}_{i+1}(t)) (e^{h_{i+1}(t)-h_i(t)}-1) - {x}_{i+1}(t)(1-{x}_i(t)) (e^{h_i(t)-h_{i+1}(t)}-1)\Big]. \nn\\
\end{eqnarray}
Using \eqref{convex-duality} we conclude
\be
I_{\mathrm{dyn}}(\bm{x})= \sup_h \int_0^T  {\cal \ell}^{h}(t,{x}(t),\dot{x}(t)) dt = \inf_{h \,:\, \dot{\bm{x}} = F^{h}(\bm{x})} \int_0^T {j}^{{h}}(t,{x}(t)) dt.
\ee
\end{proof}

We are now ready to conclude
the proof of the large deviation lower bound. 

\begin{proposition}[Lower bound]
\label{lower-prop}
For every open set ${\cal O}\subset D([0,T],\Omega)$ we have
\begin{equation} 
\label{lower}
\liminf_{s\to\infty}\frac{1}{s}\log\mathbb{P}_p^s (\bm{X}^s\in {\cal O}) \ge -\inf_{\bm x\in {\cal O}}I(\bm{x})   
\end{equation}
\end{proposition}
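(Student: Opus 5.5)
It suffices to show that for every $\bm x\in{\cal O}$ with $I(\bm x)<\infty$,
\[
\liminf_{s\to\infty}\frac1s\log\mathbb P^s_p(\bm X^s\in{\cal O})\ge -I(\bm x).
\]
I would first reduce to a dense class of ``nice'' trajectories. These are paths $\bm x$ that stay in the interior of $\Omega$, i.e. $\delta\le x_i(t)\le 1-\delta$. For such paths the supremum in \eqref{eq:Lagrangian} is attained at some $w=h(t)$ with $h$ continuous, so $A(\bm x)\neq\emptyset$. The reduction is an approximation statement: for every $\bm x$ with $I(\bm x)<\infty$ there are nice $\bm x^n\to\bm x$ in the Skorokhod topology with $I(\bm x^n)\to I(\bm x)$. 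I would build $\bm x^n$ as a convex combination $(1-\tfrac1n)\bm x+\tfrac1n\bm X^{(q)}$, where $\bm X^{(q)}$ is the deterministic flow \eqref{flow} started from an interior point $q$. Convexity of $L(x,\cdot)$, together with a short initial time interval where one relaxes along the heat flow, gives the $\limsup$ of the costs, and lower semicontinuity gives the reverse inequality. Once this approximation holds, it is enough to treat nice $\bm x$.

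Fix a nice $\bm x\in{\cal O}$ and $h\in A(\bm x)$ realizing the infimum in Proposition \ref{rate2}, so that $\int_0^T j^h(t,x(t))dt=I_{\rm dyn}(\bm x)$. I would also tilt the initial measure, replacing $\nu^s_p$ by $\nu^s_{x(0)}$, and write $\mathbb Q^s$ for the law of $\bm X^{s,h}$ started from $\nu^s_{x(0)}$. By Theorem \ref{th-lln}, with $p$ replaced by $x(0)$, we get $\mathbb Q^s(\bm X^s\in{\cal O})\to1$: uniform convergence to $\bm x$ implies Skorokhod convergence, and ${\cal O}$ is open. The standard entropy inequality then gives
\[
\log\mathbb P^s_p(\bm X^s\in{\cal O})\ge \log\mathbb Q^s({\cal O})-\frac{\mathscr H(\mathbb Q^s\|\mathbb P^s_p)+\log 2}{\mathbb Q^s({\cal O})}.
\]
This follows from Jensen applied to $\log\frac{d\mathbb P}{d\mathbb Q}$ on ${\cal O}$. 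Since $\mathbb Q^s({\cal O})\to1$, it remains to show
\[
\limsup_s\frac1s\mathscr H(\mathbb Q^s\|\mathbb P^s_p)\le I_0(x(0))+I_{\rm dyn}(\bm x).
\]

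The relative entropy splits by the chain rule into an initial and a dynamical part:
\[
\mathscr H(\mathbb Q^s\|\mathbb P^s_p)=\mathscr H(\nu^s_{x(0)}\|\nu^s_p)+\mathbb E_{\nu^s_{x(0)}}\big[\mathscr H(\mathbb P^{s,h}(\cdot|X^s(0)=y)\|\mathbb P^s(\cdot|X^s(0)=y))\big].
\]
The initial term equals $s\,I_0(x(0))$ exactly, since the measures are products of binomials and the binomial relative entropy is $s$ times the Bernoulli one. For the dynamical term, \eqref{eq:xxx} gives $s\,\mathbb E^{s,h}\int_0^T j^h(t,X^s(t))dt$. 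Because $j^h$ is bounded and continuous on $[0,T]\times\Omega$ (with $0\log0=0$) and $h$ is bounded, Theorem \ref{th-lln} and bounded convergence give convergence of $\frac1s$ times this term to $\int_0^T j^h(t,x(t))dt=I_{\rm dyn}(\bm x)$. This needs no conditioning on the sequence $x^s$: we simply average over the initial law. Combining the two terms yields the claim for nice paths, and the approximation of the first paragraph finishes the proof.

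The main obstacle is the density step. One needs that boundary-touching or irregular paths of finite cost can be approximated by interior paths with $h$ continuous and bounded. Near $x_i\in\{0,1\}$ the optimal tilt $h$ blows up, and the Lagrangian is only lower semicontinuous there. I would try the convex-combination-with-heat-flow construction first, using joint convexity of $(x,v)\mapsto L(x,v)$. This convexity is plausible because the rates $x_i(1-x_j)$ are concave and the per-edge cost has the perspective structure of $v\log v$. If joint convexity fails, I would fall back on a time-discretization and piecewise-linear interpolation argument in the spirit of \cite{ShwartzWeiss1995}.
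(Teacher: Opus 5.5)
Your core step, for a path that admits a tilt, is the paper's argument. You tilt the initial law to $\nu^s_{x(0)}$ and the dynamics by some $h\in A(\bm x)$, get $\mathbb Q^s(\mathcal O)\to1$ from Theorem~\ref{th-lln}, and bound the cost via Jensen, split into $sI_0(x(0))$ and $s\int_0^T j^h$. Your version is slightly cleaner in two places. The entropy inequality with the full relative entropy avoids the paper's passage from $\mathbb E[\mathbf 1_B\log\frac{d\mathbb P}{d\mathbb Q}]$ to the full expectation. The binomial entropy is computed exactly rather than through the LDP of $\nu^s_p$. The paper has no density step: it simply picks $h\in A(\bm x)$, relying on the claim in Proposition~\ref{rate2} that $I_{\rm dyn}=\infty$ when $A(\bm x)=\emptyset$. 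You are right that this is not enough. For $p\in(0,1)^{\mathbb T_N}$, the constant path with $x_1\equiv0$, $x_2\equiv\frac12$ has $I(\bm x)<\infty$, because $I_{\rm dyn}(\bm x)=T\,L(x,0)$ and $-H(x,w)\le\sum_i(c_{i,i+1}+c_{i+1,i})(x)$. Yet $F^h_1(x)\ge\frac12e^{h_1-h_2}>0$ for every finite $h$, so $A(\bm x)=\emptyset$.

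The gap is in how you carry out that density step, because $L$ is \emph{not} jointly convex. Since $L(x,\cdot)$ is the Legendre transform of the finite convex $H(x,\cdot)$, joint convexity of $L$ would force $x\mapsto H(x,w)$ to be concave for every $w$. But $H(x,w)$ contains the quadratic form $-\sum_i2(\cosh g_i-1)\,x_ix_{i+1}$, with $g_i=w_{i+1}-w_i$. This form has zero diagonal, so it is indefinite as soon as some $g_i\neq0$; equivalently, the rates $x_i(1-x_j)$ are saddles, not concave. So $L(x^n,\dot x^n)\le(1-\frac1n)L(x,\dot x)$ cannot be invoked.

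The convex-combination construction can be rescued at the level of fluxes. Write $L(x,v)=\inf\{\sum_r\psi(a_r,\lambda_r(x)):a_r\ge0,\ \sum_ra_r\gamma_r=v\}$ over the $2N$ nearest-neighbour jumps, with $\psi(a,\lambda)=a\log\frac a\lambda-a+\lambda$. Give $\bm x^\epsilon=(1-\epsilon)\bm x+\epsilon\bm y$, where $\bm y=\bm X^{(q)}$, the fluxes $(1-\epsilon)a_r+\epsilon\lambda_r(y)$. Then use three facts: $0\le\lambda_r(x^\epsilon)-(1-\epsilon)^2\lambda_r(x)-\epsilon^2\lambda_r(y)\le2\epsilon$, $\partial_\lambda\psi\le1$, and $\psi$ is subadditive and $1$-homogeneous (it is a perspective). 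These give $L(x^\epsilon,\dot x^\epsilon)\le(1-\epsilon)L(x,\dot x)+\log\frac{1}{1-\epsilon}\sum_ra_r+C\epsilon\log\frac1\epsilon$. This integrates to $\limsup_{\epsilon\to0}I(\bm x^\epsilon)\le I(\bm x)$, because finite cost forces $a_r\in L^1([0,T])$ and $I_0$ is convex.

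Staying in the interior also does not make the tilt continuous, or even bounded. On an interior path, $h(t)$ is the maximiser for $(x(t),\dot x(t))$, so it is only measurable and grows like $\log|\dot x(t)|$, and the mixture inherits the time-irregularity of $\bm x$. You therefore need either an extra time regularisation, or a version of Theorem~\ref{th-lln} and of the convergence of $\frac1s\mathscr H$ for measurable tilts with $\int_0^T(1+|\nabla h|)e^{|\nabla h|}\,dt<\infty$, where $|\nabla h(t)|=\max_i|h_{i+1}(t)-h_i(t)|$. Gr\"onwall still works with a time-integrable Lipschitz constant, and finite cost supplies exactly this integrability.
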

\begin{proof}
Let $B(\bm{x},\delta)$ be the open ball around $\bm{x}$  of radius $\delta$, i.e.
\be
B(\bm{x},\delta) = \{\bm{y}\in D([0,T],\Omega): d({\bm{y}},\bm{x}) < \delta\},
\ee
where $d(\cdot,\cdot)$ is the Skorokhod distance.
If ${\cal O}\subset D([0,T],\Omega)$ is open, then for any $\bm{x} \in {\cal O}$, there is an open ball $B(\bm{x},\delta_{\bm{x}})\subset {\cal O}$. Therefore, for every $\bm{x} \in {\cal O}$
\be
\liminf_{s\to\infty} \frac{1}{s}\ln\mathbb{P}_p^s(\bm{X}^s\in {\cal O}) 
\ge
\liminf_{s\to\infty} \frac{1}{s}\ln\mathbb{P}_p^s(\bm{X}^s\in B(\bm{x},\delta_{\bm{x}})).
\ee
As a consequence, to prove the lower bound \eqref{lower} it is enough to show for a given $\bm{x}
\in D([0,T],\Omega)$ the local lower bound 
\be
\label{needthis}
\liminf_{\delta\to 0}\liminf_{s\to\infty} \frac{1}{s}\ln\mathbb{P}_p^s\big(\bm{X}^s\in B({\bm{x}},\delta)\big) \ge - I(\bm{x}).
\ee
Consider the tilted process $\bm{X}^{s,h}$ with generator \eqref{gen-tilted} initialized from the product measure $\nu_p^s$ and denote by $\mathbb{P}_{x(0)}^{s,h}$ its law in path space. We have
\begin{eqnarray}
\mathbb{P}_p^s(\bm{X}^{s}\in B({\bm{x}},\delta)) 
&=& 
\mathbb{E}_p^s[{\bf 1}_{\{\bm{X}^{s}\in B({\bm{x}},\delta)\}}]
\\
&=& 
\mathbb{E}_{x(0)}^{s,h}[{\bf 1}_{\{\bm{X}^{s,h}\in B({\bm{x}},\delta)\}} \frac{d\mathbb{P}_p^{s}}{d\mathbb{P}_{x(0)}^{s,h}}]\nonumber,
\end{eqnarray}
where $\mathbb{E}_p^{s,h}$ denotes expectation with respect to
$\mathbb{P}_p^{s,h}$.
Therefore, applying Jensen inequality, one gets
\begin{eqnarray}
\label{eq-jensen}
\log(
\mathbb{P}_p^s(\bm{X}^{s}\in B({\bm{x}},\delta)) 
&\ge &
\log(
\mathbb{P}_{x(0)}^{s,h}(\bm{X}^{s,h}\in B({\bm{x}},\delta)) +
\frac{
\mathbb{E}_{x(0)}^{s,h}[{\bf 1}_{\{\bm{X}^{s,h}\in B({\bm{x}},\delta)\}} \ln(\frac{d\mathbb{P}_p^s}{d\mathbb{P}_{x(0)}^{s,h}})]
}
{\mathbb{P}_{x(0)}^{s,h}(\bm{X}^{s,h}\in B({\bm{x}},\delta))}. \nn\\
&&
\end{eqnarray}
Now, for a given path $\bm{x}
\in D([0,T],\Omega)$, choose $h = \{h(t) \,:\, t\ge 0\}$ so that the $\bm{X}^{s,h}$ process 
satisfies a  functional law of large numbers with mean $\bm{x}$. In view of Theorem \ref{th-lln} this amounts to requiring that 
\begin{eqnarray}
\label{lln}
\dot{\bm{x}} = F^{h}(\bm{x})
\end{eqnarray}
where $F^{h}$ is defined in \eqref{vector-b}.
If 
\be
\lim_{s\to\infty} \mathbb{P}_{x(0)}^{s,h}(\bm{X}^{s,h}\in B({\bm{x}},\delta))=1
  \qquad\text{ for all } \delta>0
  \ee
  then, as a consequence of\eqref{eq-jensen},
we get
\begin{eqnarray}
\label{bound-not-optimal}
\liminf_{s\to\infty}
\frac{1}{s}\log\big(
\mathbb{P}_p^s(\bm{X}^{s}\in B({\bm{x}},\delta)\big) 
&\ge& 
\liminf_{s\to\infty}
\frac{1}{s}
\mathbb{E}_{x(0)}^{s,h}\Big[ \ln\big(\frac{d\mathbb{P}_p^s}{d\mathbb{P}_{x(0)}^{s,h}}\big)\Big].
\end{eqnarray}
We now estimate separately the dynamical and the initial contributions.
For the dynamical part, we optimize the bound over $ A(\bm{x})= \{h\in C([0,T];\mathbb{R}^{\mathbb{T}_N}) \,|\, \dot{\bm{x}} = F^{h}(\bm{x})\}$, getting 
\begin{eqnarray}
\label{bound-optimal2}
\liminf_{s\to\infty}
\frac{1}{s}\log\big(
\mathbb{P}_p^s(\bm{X}^{s}\in B({\bm{x}},\delta)\big) 
&\ge& 
\inf_{h\in A({\bm{x}})} \Big\{
\liminf_{s\to\infty}
\frac{1}{s}
\mathbb{E}^{s,h}_{x(0)}\!\left[
\log \frac{d\mathbb{P}^s(\cdot \mid X^s(0))}{d\mathbb{P}^{s,h}(\cdot \mid X^s(0))}
\right]\Big\} \nonumber\\
&+& \liminf_{s\to\infty}
\frac{1}{s}
\mathbb{E}^{s,h}_{x(0)}\!\left[
\log \frac{d\nu^s_p}{d\nu^s_{x(0)}}(X^s(0))
\right].
\end{eqnarray}
Then, using Proposition \ref{rate2}, we obtain
\be
\inf_{h\in A({\bm{x}})} \Big\{
\liminf_{s\to\infty}
\frac{1}{s}
\mathbb{E}^{s,h}_{x(0)}\!\left[
\log \frac{d\mathbb{P}^s(\cdot \mid X^s(0))}{d\mathbb{P}^{s,h}(\cdot \mid X^s(0))}
\right]\Big\}
= - I_{\mathrm{dyn}}(\bm{x}).
\ee
Since the
initial distribution $\nu^s_{x(0)}$ satisfies a large deviation principle with
rate function $I_0$, and since under the tilted measure $\mathbb{P}_{x(0)}^{s,h}$ we have
$X^{s,h}(0) \to x(0)$ in probability, it follows that
\be
\liminf_{s\to\infty}
\frac{1}{s}
\mathbb{E}^{s,h}_{x(0)}\!\left[
\log \frac{d\nu^s_p}{d\nu^s_{x(0)}}(X^s(0))
\right]
= - I_0(x(0)).
\ee
Combining the two contributions, we conclude
\be
\liminf_{s\to\infty}\frac{1}{s}\log \mathbb{P}^s_p(\bm{X}^s \in B(\bm{x},\delta))
\ge
- I_0(x(0)) - I_{\mathrm{dyn}}(\bm{x})
= - I(\bm{x}).
\ee
This proves the result.
\end{proof}

\section{Other models}

The path space LDP formulated above for the partial exclusion process can be proved for other interacting particle systems as well. In particular, the large deviation function of those satisfying a local in time large deviation principle will keep the form of an action functional associated with a Hamiltonian system.

In this section we first consider the case of the ``inclusion process'' \cite{gkr2009} and of the ``harmonic process'' \cite{fgk},
which are both examples of processes with a discrete non-compact state space.
We here highlight the main
differences with respect to the case
of the partial exclusion process
and provide additional steps that
are needed to deal with the unbounded state space.
Then we discuss two models with continuous state space, the Brownian energy process \cite{gkr2009} and an integrable model of heat conduction \cite{fgk}.

Since the sequence of harmonic processes 
(as well as the sequence of integrable heat conduction models) is integrable for all  values of the parameter $s$ (meaning that the corresponding quantum spin chain is integrable for all spin values),
the large deviation function of the
harmonic process is the action associated with an Hamiltonian system that is classically integrable \cite{gs1}.

\subsection{The inclusion process}
\label{sec:inclusion}
Let $\Omega^s=\{0,\frac{1}{s},\frac{2}{s},\ldots\}^{\mathbb{T}_N}$.
The inclusion process
${\bm{X}}^s$ is the Markov process defined by the generator working on functions $f:\Omega^s\to\R$:
\begin{eqnarray}
\label{gen-inclusion}
({\mathscr L}^{s}f)({x}) 
&=& 
s \sum_{i\in {\mathbb{T}_N}} x_i(1+x_{i+1}) (f(x^{i,i+1})-f(x))\nonumber\\
&+& 
s \sum_{i\in {\mathbb{T}_N}} x_{i+1}(1+x_i)(f(x^{i+1,i})-f(x))
\end{eqnarray}
Here, $x^{j,\ell}$ denotes the configuration that is obtained from $x\in\Omega_s$ by moving a mass $\frac{1}{s}$ from site $j$ to site $\ell$, defined in \eqref{newconf}.
At any time $t>0$ the random vector $X^s(t) =(X^s_{1}(t),\ldots, X^s_{N}(t))$, taking values in $\Omega^s$,
describes the rescaled number of particles.
The rates of the inclusion process are very similar to the ones of the partial exclusion process (a minus is changed to a plus). However, in the inclusion process, every site can accumulate an unbounded number of particles, and so the state space is non-compact. 
For a discussion of the inclusion process and its main properties, see \cite{gkr2009}.

We shall use notations similar to those we used for the partial exclusion process. Namely, for a fixed time interval $[0,T]$, we denote by $E^s = D([0,T];\Omega^s)$ the space of cadlag trajectories on $\Omega^s$. Elements of $E^s$ are denoted by ${\bm x}^s=\{x^s(t)\,:\; t\in[0,T]\}$.  Given a vector $\theta=(\theta_1,\ldots,\theta_N)\in(0,\infty)^{\mathbb{T}_N}$, we denote by 
$\mathbb{P}^s_{\theta}$ the law of the process $\bm{X}^s$ initialized from configurations that are distributed according to $\mu^s_\theta$
having mass function
\be
\label{initial-incl-harm}
\mu^s_{\theta}(x) = \bigotimes_{i\in {\mathbb T}_N} \frac{1}{ (s x_i)! } \frac{\Gamma(s+ s x_i )}{\Gamma(s)} \left(\frac{\theta_i}{1+\theta_i}\right)^{s x_i } \left(\frac{1}{1+\theta_i}\right)^{s}, \qquad\qquad x\in\Omega^s.
\ee
The marginal at each site $i$ is a (scaled) negative binomial distribution with mean $\theta_i$.
The homogeneous product measure $\mu_{\bar{\mathscr{\theta}}}$, with $\bar{\theta}$ having all components equal, is reversible.
The measure \eqref{initial-incl-harm}
describes a ``local equilibrium'' initial condition, corresponding to an inhomogenous version of the reversible measure.

The tilted inclusion process  $ \bm{X}^{s,h}$
is the Markov process generated by 
\begin{eqnarray}
\label{gen-inclusion-tilt}
({\mathscr L}^{s,h}f)({x}) 
&=& 
s \sum_{i\in {\mathbb{T}_N}} x_i(1+x_{i+1}) e^{h_{i+1}(t)-h_i(t)}(f(x^{i,i+1})-f(x))\nonumber\\
&+& 
s \sum_{i\in {\mathbb{T}_N}} x_i(1+x_{i+1})  e^{h_{i}(t)-h_{i+1}(t)} (f(x^{i+1,i})-f(x)).
\end{eqnarray}

\subsubsection{Law of large numbers}

\begin{theorem}[LLN, inclusion process]
\label{th-lln-inclusion}
Let $\Omega := (0,\infty)^{\mathbb{T}_N}$.
Consider the tilted inclusion process $\bm{X}^{s,h}$ defined by the generator \eqref{gen-inclusion-tilt}, with initial distributions $(\mu^s_\theta)$. 
Then, for every $T>0$ and every $\epsilon>0$, we have
\begin{equation}
\lim_{s\to\infty}\mathbb P_{\theta}^{s,h}\Big(\sup_{t\in[0,T]}|X^{s,h}(t)-X(t)|>\epsilon\Big)=0,
\label{7.6}
\end{equation}
where $\bm{X}$ is the flow initialized from $X(0)=\theta$ and satisfying the equations
\begin{equation}
\frac{dX(t)}{dt}=F^h(X(t)),
\label{flow-inclusion}
\end{equation}
with $F^h:\Omega\to\mathbb R^{\mathbb T_N}$ given by
\begin{eqnarray}
F^{h}_{i}(x)
&= & 
x_{i-1}(1+x_i)e^{h_i(t)-h_{i-1}(t)}-x_i(1+x_{i-1})e^{h_{i-1}(t)-h_i(t)}\nonumber \\
&- & 
x_i(1+x_{i+1})e^{h_{i+1}(t)-h_i(t)}+x_{i+1}(1+x_i)e^{h_i(t)-h_{i+1}(t)}.
\label{field-flow-inclusion}
\end{eqnarray}
\end{theorem}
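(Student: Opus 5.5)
We follow the martingale and Gr\"onwall argument of Theorem \ref{th-lln}, adding a localization step that replaces compactness of the state space. Note first that the generator \eqref{gen-inclusion-tilt} as printed has the rate $x_i(1+x_{i+1})$ in the second line. We read it as $x_{i+1}(1+x_i)e^{h_i(t)-h_{i+1}(t)}$, which is the choice consistent with \eqref{field-flow-inclusion}. With $f_i(x)=x_i$, a direct computation gives $\mathscr L^{s,h}f_i=F^h_i$ exactly, as in \eqref{L-action-on-f_i}. Hence
\[
X^{s,h}(t)-X(t)=X^{s,h}(0)-X(0)+\int_0^t\big[F^h(X^{s,h}(u))-F^h(X(u))\big]du+M^{s,h}(t),
\]
where $M^{s,h}$ is a local martingale with carr\'e du champ $\Gamma(f_i,f_i)(x)=O(1/s)\cdot\sum_j x_j(1+x_{j\pm1})$.

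The main obstacle is that $F^h$ is quadratic, so it is not globally Lipschitz on $(0,\infty)^{\mathbb T_N}$, and the martingale bracket is not uniformly bounded. We handle this with mass conservation. Every jump preserves the total mass $|x|_1=\sum_i x_i$, and this remains true under tilting, so
\[
\sum_i X^{s,h}_i(t)=\sum_i X^{s,h}_i(0)=:m_s\quad\text{for all } t.
\]
Under $\mu^s_\theta$, each $X_i^{s,h}(0)$ is a scaled negative binomial with mean $\theta_i$ and variance $\theta_i(1+\theta_i)/s$. Chebyshev's inequality therefore gives $X^{s,h}(0)\to\theta$ in probability, and in particular $m_s\le R:=|\theta|_1+1$ with probability tending to one. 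On the event $G_s=\{m_s\le R\}$, the whole trajectory stays in the compact simplex
\[
S_R=\Big\{x\in[0,\infty)^{\mathbb T_N}:\ \sum_i x_i\le R\Big\}.
\]
The deterministic flow also conserves mass, since $\sum_iF^h_i=0$. Its solution starting from $\theta$ therefore exists globally and stays in $S_{|\theta|_1}\subset S_R$; nonnegativity is preserved because $F^h_i\ge0$ when $x_i=0$. On $S_R$, with $h$ bounded on $[0,T]$, $F^h$ is Lipschitz with a constant $K=K(R,h,T)$.

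To avoid conditioning on $G_s$ inside the martingale, we stop the process. Let
\[
\tau_s=\inf\{t:\ X^{s,h}(t)\notin S_R\},
\]
and note that $\{\tau_s\le T\}\subset G_s^c$. For $t\le T\wedge\tau_s$, Gr\"onwall's inequality gives
\[
\sup_{t\le T\wedge\tau_s}|X^{s,h}(t)-X(t)|\le e^{KT}\Big(|X^{s,h}(0)-\theta|+\sup_{t\le T\wedge\tau_s}|M^{s,h}(t)|\Big).
\]
The stopped martingale $M^{s,h}(\cdot\wedge\tau_s)$ is square integrable. Its bracket satisfies
\[
\langle M_i^{s,h}\rangle(T\wedge\tau_s)\le \frac{C(R,h)\,T}{s},
\]
because every rate is at most $sR(1+R)e^{2\|h\|_\infty}$ and each jump changes $x_i$ by $1/s$. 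Doob's inequality then shows that $\mathbb P(\sup_{t\le T\wedge\tau_s}|M^{s,h}(t)|>\eta)\to0$ for every $\eta>0$.

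To conclude, we split the probability:
\[
\mathbb P^{s,h}_\theta\Big(\sup_{t\le T}|X^{s,h}(t)-X(t)|>\epsilon\Big)\le \mathbb P(G_s^c)+\mathbb P\Big(|X^{s,h}(0)-\theta|>\tfrac\epsilon2e^{-KT}\Big)+\mathbb P\Big(\sup_{t\le T\wedge\tau_s}|M^{s,h}(t)|>\tfrac\epsilon2e^{-KT}\Big).
\]
On $G_s$ we have $\tau_s>T$, so the stopped and unstopped suprema coincide there, which justifies the split. Each of the three terms tends to zero by the steps above, which proves \eqref{7.6}.
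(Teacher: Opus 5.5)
Your proposal is correct and follows essentially the same route as the paper. Mass conservation confines both the process and the flow to a compact simplex on which $F^h$ is Lipschitz and the carr\'e du champ is $O(1/s)$; the same Gr\"onwall bound and three-term split then finish the argument, with your stopping time $\tau_s$ (which here only takes the values $0$ or $\infty$) playing exactly the role of the paper's $\mathcal{F}_0$-measurable event $A_M^s$, and your reading of the misprinted second rate in \eqref{gen-inclusion-tilt} as $x_{i+1}(1+x_i)e^{h_i(t)-h_{i+1}(t)}$ is the one consistent with \eqref{field-flow-inclusion}.
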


\noindent
{\bf Remark.}
The law of large numbers holds for more general initial distributions $(\mu^s)_{s\in\N}$. As in Theorem \ref{th-lln-gen},  for a given $x\in \Omega$, it is enough to assume that $
X^{s,h}(0) \xrightarrow{{\mathbb{P}_{\mu^{s}}}} x $
to have that the tilted inclusion process $X^{s,h}$ started from $\mu^s$ converges in probability to the flow \eqref{flow-inclusion} initialized from $X(0)=x$.
\begin{proof}
The proof follows the same strategy as that of Theorem~\ref{th-lln},
with one important difference: the state space is no longer compact, so $F^h$ is not globally Lipschitz on all of $\Omega$. The conservation of total mass compensates this.
Indeed, for the inclusion process, every jump moves a mass $1/s$ from one site to a neighboring one, and therefore
\begin{equation}
\sum_{i\in\mathbb T_N}X_i^{s,h}(t)=\sum_{i\in\mathbb T_N}X_i^{s,h}(0)
\qquad \forall t\ge 0.
\label{7.9}
\end{equation}
Similarly, summing the components of $F^h$ in \eqref{field-flow-inclusion}, one checks that
$\sum_{i\in\mathbb T_N}F^{h}_{i}(x)=0$,
so that the limiting flow also conserves the total mass:
\begin{equation}
\sum_{i\in\mathbb T_N}X_i(t)=\sum_{i\in\mathbb T_N}\theta_i
\qquad \forall t\ge 0.
\label{7.11}
\end{equation}
Fix $M>\sum_{i\in\mathbb T_N}\theta_i$ and define the event
\begin{equation}
A_M^s=\Bigl\{\sum_{i\in\mathbb T_N}X_i^{s,h}(0)\le M\Bigr\}.
\label{7.12}
\end{equation}
On the event $A_M^s$, by \eqref{7.9} the whole trajectory remains in the set
\be
K_M=\Bigl\{x\in\Omega:\sum_{i\in\mathbb T_N}x_i\le M\Bigr\}.
\ee
On $K_M$, all coordinates are bounded by $M$, and therefore the vector field $F^h$ is Lipschitz on $K_M$. Hence there exists a constant $C_M>0$ such that
\begin{equation}
\label{lipschitz-incl}
|F^h(x)-F^h(y)|\le C_M|x-y|
\qquad \forall x,y\in K_M.
\end{equation}
The martingale decomposition is exactly the same as in Section \ref{section-lln}: if we define $f_i(x)=x_i$, then
\begin{equation}
X_i^{s,h}(t)=X_i^{s,h}(0)+\int_0^t {\mathscr L}^{s,h}f_i(X^{s,h}(u))\,du+M_i^{s,h}(t),
\label{7.15}
\end{equation}
while the limiting flow satisfies
\begin{equation}
X_i(t)=X_i(0)+\int_0^t {\mathscr L}f_i(X(u))\,du.
\label{7.16}
\end{equation}
Since
\be
{\mathscr L}^{s,h}f_i(x)=F^{h}_i(x),\qquad {\mathscr L}f_i(x)=F^{h}_i(x),
\ee
we get
\begin{equation}
X^{s,h}(t)-X(t)=X^{s,h}(0)-X(0)+\int_0^t\bigl(F^h(X^{s,h}(u))-F^h(X(u))\bigr)\,du+M^{s,h}(t).
\label{7.17}
\end{equation}
Using \eqref{lipschitz-incl}, on the event $A_M^s$ we obtain
\begin{equation}
|X^{s,h}(t)-X(t)|
\le
|X^{s,h}(0)-X(0)|+C_M\int_0^t|X^{s,h}(u)-X(u)|\,du+|M^{s,h}(t)|.
\label{7.18}
\end{equation}
Therefore, by the same Gr\"onwall argument as in Theorem~\ref{th-lln},
\begin{align}
&\mathbb P_{\theta}^{s,h}\Bigl(\sup_{t\in[0,T]}|X^{s,h}(t)-X(t)|>\epsilon\Bigr)
\notag\\
&\le
\mathbb P_{\theta}^{s,h}\bigl((A_M^s)^c\bigr)
+\mathbb P_{\theta}^{s,h}\Bigl(|X^{s,h}(0)-X(0)|>\tfrac{\epsilon}{2}e^{-C_MT}\Bigr)
+\mathbb P_{\theta}^{s,h}\Bigl(\sup_{t\in[0,T]}|M^{s,h}(t)|>\tfrac{\epsilon}{2}e^{-C_MT}\,\cap \, A_M^s\Bigr).
\end{align}
Using \eqref{initial-incl-harm} the first term vanishes as $s\to\infty$, and the second one vanishes as well when $X(0) = \theta$. It remains to control the martingale term.
For the quadratic covariation, the only change with respect to \eqref{covar} is that the factors $(1-x_j)$ are replaced by $(1+x_j)$. Hence, on $K_M$, the jump rates are uniformly bounded, and each jump still changes $f_i(x)=x_i$ by at most $1/s$. Therefore there exists a constant $C_M'>0$ such that
\begin{equation}
\Gamma(f_i,f_i)(z)\le \frac{C_M'}{s}
\qquad \forall z\in K_M.
\label{7.20}
\end{equation}
Using Doob's inequality and the isometry for square-integrable martingales, we conclude that for each $i\in\mathbb T_N$,
\begin{align}
&\mathbb P_{\theta}^{s,h}\Bigl(\sup_{t\in[0,T]}|M_i^{s,h}(t)|>\tfrac{\epsilon}{2}e^{-C_MT},\,A_M^s\Bigr)
\notag\\
&\qquad\le
\frac{4e^{2C_MT}}{\epsilon^2}
\,\mathbb E_{\theta}^{s,h}\!\left[\langle M_i^{s,h},M_i^{s,h}\rangle(T)\mathbf{1}_{A_M^s}\right]
\le
\frac{4e^{2C_MT}}{\epsilon^2}\,\frac{C_M'T}{s},
\label{7.21}
\end{align}
which vanishes as $s\to\infty$. This proves \eqref{7.6}.
\end{proof}

\subsubsection{Large deviations}

\begin{theorem}[LDP, inclusion process]
\label{ldp-incl}
The sequence of inclusion processes $(\bm{X}^s)_{s\in\mathbb{N}}$, started from the distribution $(\mu^s_\theta)_{s\in \N}$ in \eqref{initial-incl-harm}, satisfies a large deviation principle on $D([0,T],\Omega)$ with speed $s$ and rate function   
\begin{equation}
I(\bm{x})=I_0(x(0))+I_{\mathrm{dyn}}(\bm{x}),
\label{7.22}
\end{equation}
where the static part is given by
\begin{equation}
I_0(x)
=
\sum_{i\in\mathbb{T}_N}
\left[
x_i \log\frac{x_i}{\theta_i}
-
(1+x_i)\log\frac{1+x_i}{1+\theta_i}
\right]
\end{equation}
and the dynamical part is given by
\begin{equation}
I_{\mathrm{dyn}}(\bm{x})
=
\sup_{h\in C^1([0,T];\mathbb R^{\mathbb T_N})}
\left[
h(T)\cdot x(T)-h(0)\cdot x(0)
-
\int_0^T\bigl(\dot h(t)\cdot x(t)-H(x(t),h(t))\bigr)\,dt
\right],
\label{7.23}
\end{equation}
with Hamiltonian
\begin{equation}
H(x,w)
=
\sum_{i\in\mathbb T_N}
\Bigl(
x_i(1+x_{i+1})(e^{w_{i+1}-w_i}-1)
+
x_{i+1}(1+x_i)(e^{w_i-w_{i+1}}-1)
\Bigr).
\label{7.24}
\end{equation}
\end{theorem}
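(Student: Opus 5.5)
We follow the scheme used for Theorem~\ref{main-theorem}: an upper bound for compact sets via exponential martingales, extension to closed sets via exponential tightness, and a lower bound via the tilted dynamics \eqref{gen-inclusion-tilt}. The only new feature is the non-compact state space $\Omega=[0,\infty)^{\mathbb T_N}$. We handle it by localizing on the conserved total mass, exactly as in the proof of Theorem~\ref{th-lln-inclusion}. The preliminary step is an exponential estimate on the initial mass. Under $\mu^s_\theta$ each $sX_i^s(0)$ is negative binomial, so for $a_i<\log\frac{1+\theta_i}{\theta_i}$
\begin{equation*}
\mathbb E_{\mu^s_\theta}\bigl[e^{s a\cdot X^s(0)}\bigr]=\prod_{i\in\mathbb T_N}\Bigl(\frac{1}{1+\theta_i-\theta_i e^{a_i}}\Bigr)^{s}.
\end{equation*}
Taking $a_i$ small and positive and applying Chebyshev gives $\limsup_s \frac1s\log\mathbb P^s_\theta(\sum_i X^s_i(0)>M)\le -c(M)$, with $c(M)\to\infty$ as $M\to\infty$. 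By \eqref{7.9}, outside this event the whole path stays in the compact simplex $K_M$. Compact containment therefore holds at exponential scale. The Legendre transform of $a\mapsto-\sum_i\log(1+\theta_i-\theta_ie^{a_i})$ gives the static cost $I_0$ (the sign of the second term in its display will have to be checked against this computation).

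\textbf{Upper bound.} For $h\in C^1$ the exponential martingale $Z^{s,h}(T)=\exp\{sI^h_{\rm dyn}(\bm X^s)\}$ is built exactly as in Proposition~\ref{upper-prop}, with $H$ given by \eqref{7.24}. Since the rates are unbounded, the martingale property needs care. Because the mass is conserved, the process started from a fixed configuration lives on a finite set, so $Z^{s,h}$ is a genuine mean-one martingale conditionally on $X^s(0)$. The initial factor $e^{sa\cdot X^s(0)}$ is integrable only for $a$ in the domain above. This is harmless, because the supremum defining $I_0$ may be restricted to that domain. The minimax step (Lemma~3.3 of \cite{kipnis2013scaling}) then yields the bound for compact sets. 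For exponential tightness we repeat Proposition~\ref{prop-tightness} on $K_M$. On that set $A^{a,w}(t)\le C_M^{a,w}t$, and the complement is handled by the initial mass estimate with $M=M(n)$ large. The sets $\mathcal K_n$ are then built inside $D([0,T],K_{M(n)})$.

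\textbf{Lower bound.} We reproduce Propositions~\ref{rate2} and~\ref{lower-prop}. Given $\bm x$ with $I(\bm x)<\infty$, we choose $h\in A(\bm x)$ and use Theorem~\ref{th-lln-inclusion} in its generic-initial-condition form, together with the Jensen change of measure. The entropy density $j^h$ is continuous on $[0,T]\times K_M$, hence bounded there. Since all tilted paths remain in $K_M$ almost surely once the initial mass is bounded, the convergence $\frac1s\mathscr H\to\int_0^T j^h(t,x(t))\,dt$ follows by bounded convergence on $A^s_M$. The contribution of $(A^s_M)^c$ is negligible: under $\mu^s_{x(0)}$ this event has vanishing probability, and the integrand is bounded by a constant times the total mass, which has uniformly bounded second moments. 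The convex duality \eqref{convex-duality} holds pointwise for the Hamiltonian \eqref{7.24}, with $c_{i,j}(x)=x_i(1+x_j)$.

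\textbf{Main obstacle.} The hardest step is the existence of a continuous $h\in A(\bm x)$ for paths with $I(\bm x)<\infty$, and the accompanying approximation argument. This is already implicit for exclusion, but here it is made worse by paths touching $x_i=0$, where the rates degenerate, and by the lack of a uniform ellipticity bound from above. We will first prove the local lower bound for smooth paths with $\delta\le x_i(t)$, for which $h$ is obtained by solving the current equations edge by edge: on a torus this is solvable, up to a choice of a circulating current, because $\sum_i \dot x_i=0$. We then pass to general $\bm x$ by showing that such paths are $I$-dense, meaning that $\bm x_n\to\bm x$ with $I(\bm x_n)\to I(\bm x)$. For this we take convex combinations with the stationary profile and time-mollify, and use the convexity of $L(x,\cdot)$ together with the lower semicontinuity of $I$.
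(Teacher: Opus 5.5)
Your upper bound is the paper's argument. Both use localization on the conserved mass, the exponential martingale (here $\frac1s e^{-sh\cdot x}\mathscr L^s e^{sh\cdot x}=H(x,h)$ holds exactly), the minimax lemma, and exponential tightness on the mass shell $K_M$. Your Chernoff bound on $\sum_i X^s_i(0)$ is the same estimate the paper extracts from the LDP of $\mu^s_\theta$ on $\{\sum_i x_i\ge M\}$. The Legendre transform does give the displayed $I_0$, minus sign included. You are more careful than the paper about the conditional martingale property, the domain of $a$, and the event $(A^s_M)^c$. One slip there: $j^h$ grows quadratically in the mass, because the rates $x_i(1+x_j)$ are quadratic. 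So you need uniformly bounded fourth moments of the scaled negative binomials, not second; they do hold.

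The lower bound is where you genuinely depart from the paper. The paper takes an arbitrary finite-cost $\bm x$, gets $h\in A(\bm x)$ from Proposition~\ref{rate2}, and applies Theorem~\ref{th-lln-inclusion} directly. That presupposes a continuous tilt for every finite-cost path, which fails in general. The optimal tilt is only as regular as $\dot x$. Worse, a path keeping $x_i\equiv 0$ on an interval while $x_{i\pm1}>0$ has finite cost (suppressing the inflow costs $x_{i-1}+x_{i+1}$ per unit time) but needs $h_i-h_{i\pm1}=-\infty$. Your reduction to smooth paths with $x_i\ge\delta$, followed by $I$-density, is the sound version of the argument.

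Two points in your sketch must be fixed.

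(i) The circulating current is not free. Define the edge tilts $g_i=h_{i+1}-h_i$ by $x_i(1+x_{i+1})e^{g_i}-x_{i+1}(1+x_i)e^{-g_i}=J_{i,i+1}$. The current must be the unique constant that makes these $g_i$ sum to zero around the torus; otherwise there is no site potential $h$. This constant exists and is unique because each $g_i$ is strictly monotone in the current.

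(ii) Convexity of $L(x,\cdot)$ and lower semicontinuity of $I$ do not give $\limsup_n I(\bm x_n)\le I(\bm x)$. Lower semicontinuity gives the opposite inequality. Mixing with the flat profile $\bar x$ also moves the base point, precisely where the rates degenerate. What closes the step is the current representation $L(x,v)=\inf\bigl\{\sum_i\Phi\bigl(J_{i,i+1};x_i(1+x_{i+1}),x_{i+1}(1+x_i)\bigr): v_i=J_{i-1,i}-J_{i,i+1}\bigr\}$. Here $\Phi(J;a,b)=\sup_g\{gJ-a(e^g-1)-b(e^{-g}-1)\}$ is jointly convex and $1$-homogeneous, hence subadditive, in $(J,a,b)$. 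Combine this with $x_i^\lambda(1+x_j^\lambda)\ge(1-\lambda)^2x_i(1+x_j)+\lambda(1-\lambda)\bar x$ for $x^\lambda=(1-\lambda)x+\lambda\bar x$. After mixing, time mollification follows from Jensen in $v$ together with continuity of $L$ in $x$ on $\{x_i\ge\lambda\bar x\}$.
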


\begin{proof}
The proof follows the same strategy as that of Theorem \eqref{main-theorem}
. The only substantial difference is that the state space
$
\Omega=[0,\infty)^{\mathbb T_N}
$
is not compact. This is handled by localizing the process on compact subsets determined by the conserved total mass.
For $M>0$, define
\begin{equation}
K'_M=
\Bigl\{
\bm{x}\in D([0,T],\Omega):\sup_{t\in[0,T]}\sum_{i\in\mathbb T_N}x_i(t)\le M
\Bigr\}.
\label{7.25}
\end{equation}
On $K'_M$, all coordinates are uniformly bounded by $M$, so all jump rates of the original and tilted inclusion processes are uniformly bounded. In particular, once restricted to $K'_M$, the arguments used in Sections \ref{sec-upper}  and \ref{sec-lower} for the partial exclusion process apply verbatim.

\smallskip

\noindent
{\bf{Upper bound.}}
Fix a closed set ${\cal C}\subset D([0,T],\Omega)$. For every $M>0$, the set ${\cal C}\cap K'_M$ is closed in the compact state space $\{\sum_i x_i\le M\}$, and hence the proofs in Section \ref{sec-upper} yield
\begin{equation}
\limsup_{s\to\infty}\frac1s\log \mathbb P_{\theta}^s(\bm{X}^s\in {\cal C}\cap K'_M)
\le
-\inf_{\bm{x}\in {\cal C}\cap K'_M} I(\bm{x}).
\label{7.26}
\end{equation}
It remains to control the contribution outside $K'_M$. 
We have
\be
\mathbb P_{\theta}^s(\bm{X}^s\in {\cal C})
\le
\mathbb P_{\theta}^s(\bm{X}^s\in {\cal C}\cap K'_M)
+
\mathbb P_{\theta}^s(\bm{X}^s\notin K'_M).
\ee
By the conservation of the total mass,
\be
\mathbb P_{\theta}^s(\bm{X}^s\notin K'_M)
=
\mathbb{P}_{\mu^s_\theta}\Bigl(\sum_{i\in\mathbb T_N}x_i>M\Bigr).
\ee
Since the initial distributions $(\mu_\theta^s)_{s\in\N}$ satisfy an LDP on $\Omega$ with rate function $I_0$, the large deviation upper bound for the closed set $\{\sum_i x_i\ge M\}$ gives
\begin{equation}
\limsup_{s\to\infty}\frac1s\log \mathbb P_{\theta}^s(\bm{X}^s\notin K'_M)
\le
-\inf_{\sum_i x_i\ge M} I_0(x).
\label{7.27}
\end{equation}
Combining this with \eqref{7.26} and letting $M\to\infty$, we obtain the large deviation upper bound for closed sets.

\smallskip

\noindent
{\bf{Lower bound.}}
Fix an open set ${\cal O}\subset D([0,T],\Omega)$ and a trajectory $\bm{x}\in {\cal O}$ such that $I(\bm{x})<\infty$. Since $\bm{x}$ is càdlàg and has finite total mass on $[0,T]$, there exists $M>0$ such that
\begin{equation}
\sup_{t\in[0,T]}\sum_{i\in\mathbb T_N}x_i(t)<M.
\label{7.28}
\end{equation}
By taking $\delta>0$ small enough, we may assume that
\begin{equation}
B(\bm{x},\delta)\subset {\cal O}\cap K_M.
\label{7.29}
\end{equation}
On the set $K'_M$, the inclusion process has uniformly bounded rates, and therefore  using the tilted inclusion process, Theorem \ref{th-lln-inclusion}, and the relative-entropy representation of the rate function, we obtain
\begin{equation}
\liminf_{s\to\infty}\frac1s\log \mathbb P_{\theta}^s(\bm{X}^s\in B(\bm{x},\delta))
\ge
-I(\bm{x}).
\label{7.30}
\end{equation}
Since $B(\bm{x},\delta)\subset {\cal O}$, this yields
\be
\liminf_{s\to\infty}\frac1s\log \mathbb P_{\theta}^s(\bm{X}^s\in {\cal O})
\ge
-I(\bm{x}).
\ee
Taking the infimum over $\bm{x}\in {\cal O}$ proves the lower bound for open sets.
\end{proof}

\subsection{The harmonic process}
\label{sec:harmonic}
Let $\Omega^s=\{0,\frac{1}{s},\frac{2}{s},\ldots\}^{\mathbb{T}_N}$.
The harmonic process  $ \bm{X}^s$, taking values in $\Omega_s$,
is the Markov process defined by the generator:
\begin{eqnarray}
\label{gen-harm}
({\cal L}^{s}f)({x}) 
&=& 
s \sum_{i\in {\mathbb{T}_N}} \sum_{k=1}^{s x_i }
\varphi_s(k, sx_i) (f(x^{i,i+1,k})-f(x))\nonumber\\
&+& 
s \sum_{i\in {\mathbb{T}_N}} \sum_{k=1}^{ s x_{i+i}}\varphi_s(k,sx_{i+1}) (f(x^{i+1,i,k})-f(x)), \qquad\qquad x\in\Omega^s,
\end{eqnarray}
where  the rates are given by
\be
\label{rates-harm}
\varphi_s(k,n) = \frac{1}{k} \frac{\Gamma(n+1)}{\Gamma(n+s)} \frac{\Gamma(n-k+s)}{\Gamma(n-k+1)} \indic{k\le n} \qquad\qquad k,n\in\mathbb{N}.
\ee
Here, $x^{j,\ell, k}$ denotes the configuration that is obtained from $x\in\Omega_s$ by moving a mass $\frac{k}{s}$ from site $j$ to site $\ell$, i.e.
\begin{eqnarray}
\label{newconf-harmonic}
x^{j,\ell,k}_i =
\begin{cases}
x_i, &  i\neq j,\ell,\\
x_i-\frac{k}{s},\qquad &  i = j,\\
x_i+\frac{k}{s}, & i = \ell.
\end{cases}
\end{eqnarray}
As in the zero-range process \cite{spitzer1991interaction}, in the harmonic process the rate for a movemenent of particles from a site $j$ to a site $\ell$ is a function of the number of particles at $j$ (the departure site) only, it does not depend on the number of particle at $\ell$ (the arrival site). However, in the harmonic process a jump of many particles is possible, while in the zero-range process only jumps of one particle are allowed. For a discussion of the harmonic process and its main properties, see \cite{fgk}.

Similarly to the inclusion process, the harmonic process has a family of reversible measures given a homogeneous product of negative binomial distributions (the label of the family being the mean).  
Therefore, we shall consider the harmonic process started from the initial measure $\mu^s_{\theta}$ defined in \eqref{initial-incl-harm}, corresponding to a local equilibrium condition.

For the proof of the large deviations lower bound, we will need a tilted harmonic process. This is the Markov process $ \bm{X}^{s,h}$ generated by 
\begin{eqnarray}
\label{gen-harm-tilt}
({\cal L}^{s,h}f)({x}) 
&=& 
s \sum_{k=1}^{s x_i}
\varphi_s(k,sx_i) e^{k(h_{i+1}(t)-h_i(t))} (f(x^{i,i+1,k}-f(x))\nonumber\\
&+& 
s \sum_{k=1}^{s x_{i+1}}\varphi_s(k,sx_{i+1}) e^{k(h_{i}(t)-h_{i+1}(t))} (f(x^{i+1,i,k})-f(x)),\qquad\qquad x\in\Omega^s.
\end{eqnarray}
As in the previous cases (partial exclusion process, inclusion process) the tilting here is the exponential of the discrete gradient of the continuous function $h:\mathbb{R} \to\mathbb{R}^{\mathbb{T}_N}$.
However, the tilting is now also exponential in $k$, the number of particles moved in the transition.

\noindent
{\bf Remark.}
Before treating the harmonic process, we point out an important difference from
the partial exclusion and inclusion processes. In both of the latter models, the
only possible jumps are nearest-neighbour transfers of one unit of mass \(1/s\).
Thus the macroscopic jump vectors belong to a fixed finite set.
For the harmonic process this is no longer true. A single jump may transfer
\(k/s\) units of mass from one site to a neighbouring site, with
\(k=1,\ldots,sx_i\). Hence the microscopic generator contains a sum over the
jump size \(k\), and the corresponding macroscopic jump vectors are
\[
    k(e_{i+1}-e_i), \qquad k(e_i-e_{i+1}), \qquad k\geq 1 .
\]
Thus the model cannot be represented by a fixed finite set of jump channels as in \eqref{network-generator}.
This distinction is also reflected in the Hamiltonian. Finite-jump models give
Hamiltonians that are finite sums of exponentials. In the harmonic case the sum
over \(k\) produces a logarithm.
Consequently the limiting Hamiltonian has a natural finite domain. This domain
constraint plays an essential role in the tilted dynamics and in the
large-deviation lower bound.

\subsubsection{Law of large numbers}

The following law of large numbers is again of Kurtz type. One additional
point, compared with the partial exclusion and inclusion processes, is that the
harmonic process allows jumps of size \(k/s\), with \(k\geq 1\). This motivates the following definition.
\begin{definition}
[Admissible pairs]
\label{def-admissible}
Let $\Omega := (0,\infty)^{\mathbb{T}_N}$ and let $h \in C^1([0,T];\mathbb{R}^{\mathbb{T}_N})$. We say that a pair $(\bm{x},h)$,
with $\bm{x} \in C([0,T];\Omega)$, is admissible on $[0,T]$ if
\begin{enumerate}
\item $\bm{x}$ solves the Cauchy problem
\begin{equation}
\dot{x}(t)=F^h(x(t)), \qquad x(0)=\theta,
\label{7.39}
\end{equation}
where $F^h:\Omega\to\mathbb R^{\mathbb T_N}$ is given by
\begin{eqnarray}
\label{drift-harm}
F^h_i(x)&=&
\left(
\frac{x_{i-1}e^{h_i(t)-h_{i-1}(t)}}{1-x_{i-1}(e^{h_i(t)-h_{i-1}(t)}-1)}
-
\frac{x_ie^{h_{i-1}(t)-h_i(t)}}{1-x_i(e^{h_{i-1}(t)-h_i(t)}-1)}
\right)\nonumber\\
&-&
\left(
\frac{x_ie^{h_{i+1}(t)-h_i(t)}}{1-x_i(e^{h_{i+1}(t)-h_i(t)}-1)}
-
\frac{x_{i+1}e^{h_i(t)-h_{i+1}(t)}}{1-x_{i+1}(e^{h_i(t)-h_{i+1}(t)}-1)}
\right);
\label{7.42}
\end{eqnarray}
\item along the trajectory one has
\be
\inf_{t\in[0,T]}\min_{i\in \mathbb{T}_N}
\left\{
1 - x_i(t)\bigl(e^{h_{i+1}(t)-h_i(t)} - 1\bigr),
\;
1 - x_{i+1}(t)\bigl(e^{h_i(t)-h_{i+1}(t)} - 1\bigr)
\right\} > 0.
\ee
\end{enumerate}
\end{definition}

\begin{theorem}[LLN, harmonic process]
\label{lln-harmonic}
Consider the tilted harmonic process $\bm{X}^{s,h}$ defined by the generator \eqref{gen-harm}, with initial distribution $\mu^s_\theta$.
Assume that the pair $(\bm{X},h)$ is admissible on $[0,T]$, where \(X\) solves
\begin{equation}
\frac{dX(t)}{dt}=F^h(X(t)), \qquad X(0)=\theta.
\label{flow-harm}
\end{equation}
Then, for every $\epsilon>0$,
\begin{equation}
\lim_{s\to\infty}
\mathbb P_{\theta}^{s,h}
\Bigl(
\sup_{t\in[0,T]}|X^{s,h}(t)-X(t)|>\epsilon
\Bigr)=0.
\label{7.41}
\end{equation}
\end{theorem}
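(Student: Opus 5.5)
We will follow the martingale--Gr\"onwall scheme of Theorem~\ref{th-lln}, in the localized form used for the inclusion process in Theorem~\ref{th-lln-inclusion}. Two new ingredients are needed: the drift ${\cal L}^{s,h}f_i$ now equals $F^h_i$ only asymptotically, and the admissibility condition must be used to control the sums over the jump size $k$.

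\emph{Step 1: localization.} Mass is conserved, since each jump moves $k/s$ from one site to a neighbour. On the event $A_M^s=\{\sum_i X_i^{s,h}(0)\le M\}$ the trajectory therefore stays in $K_M$. As in Theorem~\ref{th-lln-inclusion}, $\mathbb P^{s,h}_\theta((A_M^s)^c)\to0$ and $X^{s,h}(0)\to\theta$ in probability. By admissibility there is $\kappa>0$ with $1-X_i(t)(e^{\pm(h_{i\pm1}(t)-h_i(t))}-1)\ge 2\kappa$ along the limit path. By continuity, the same quantities are at least $\kappa$ on the tube $U_\rho=\{(t,x): |x-X(t)|\le\rho\}$ for $\rho$ small. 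We introduce the stopping time $\tau^s=\inf\{t: |X^{s,h}(t)-X(t)|>\rho\}$ and work on $[0,T\wedge\tau^s]$. Choosing $\epsilon<\rho$, it suffices to show that $\sup_{t\le T\wedge\tau^s}|X^{s,h}-X|\le\epsilon$ with high probability, because on that event $\tau^s>T$.

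\emph{Step 2: drift asymptotics.} Write $n=sx_i$ and $a=h_{i+1}-h_i$. Then
\[
{\cal L}^{s,h}f_i(x)\ \text{contains}\ \sum_{k=1}^{n} k\,\varphi_s(k,n)\,e^{ka}.
\]
Using $\varphi_s(k,n)=\frac1k\prod_{j=0}^{k-1}\frac{n-j}{n-j+s-1}$, one has $k\varphi_s(k,n)\approx \big(\frac{x}{1+x}\big)^k$ uniformly for $k\le K$, with a geometric tail bound $k\varphi_s(k,n)\le \big(\frac{n}{n+s-1}\big)^k$. Hence $\sum_k k\varphi_s e^{ka}\to\sum_{k\ge1}\big(\frac{xe^a}{1+x}\big)^k=\frac{xe^a}{1-x(e^a-1)}$ uniformly on compacts where $\frac{xe^a}{1+x}\le 1-c$. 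This ratio condition is exactly the condition $1-x(e^a-1)>0$. The domain of such $x$ is open and contains the tube $U_\rho$, so we get $\sup_{(t,x)\in U_\rho\cap\Omega^s}|{\cal L}^{s,h}f_i(x)-F^h_i(x)|=:\eta_s\to0$. Here the bound $n/(n+s-1)\le x/(x+1-1/s)$ must be handled carefully, and this uniformity is the main technical point. Since $F^h$ is smooth, it is Lipschitz with some constant $C$ on $U_\rho$.

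\emph{Step 3: Gr\"onwall and martingale.} Up to $T\wedge\tau^s$, the martingale decomposition (as in \eqref{x-s-vector}) gives
\[
|X^{s,h}(t)-X(t)|\le|X^{s,h}(0)-\theta|+T\eta_s+C\int_0^t|X^{s,h}-X|\,du+|M^{s,h}(t\wedge\tau^s)|.
\]
Gr\"onwall reduces the claim to showing $\sup_t|M^{s,h}(t\wedge\tau^s)|\to0$ in probability. The carr\'e du champ is
\[
\Gamma(f_i,f_i)=s\sum_k \varphi_s(k,n)\,e^{ka}\,\frac{k^2}{s^2}\le\frac1s\sum_k k\big(\tfrac{xe^a}{1+x}\big)^k,
\]
where we use $k\varphi_s\le$ the geometric ratio. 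This is $O(1/s)$ uniformly on $U_\rho$, again by admissibility. Doob's inequality applied to the stopped martingale then concludes, exactly as in \eqref{doob}.

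The main obstacle is Step~2: obtaining uniform convergence of the $k$-series, together with the geometric tail domination that is needed both there and for $\Gamma$, near the boundary of the Hamiltonian's domain. This is precisely why admissibility, a strict positivity condition with a margin $\kappa$, is assumed, and why the stopping-time localization to a tube is required rather than a global Lipschitz bound.
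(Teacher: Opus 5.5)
Your proposal is correct and follows essentially the same route as the paper: mass conservation, restriction to a neighbourhood of the limit path where the denominators $1-x(e^{a}-1)$ stay bounded away from zero, uniform convergence of ${\cal L}^{s,h}f_i$ to $F^h_i$ via the product formula for $k\varphi_s(k,n)$ with geometric domination, and then Gr\"onwall plus Doob with $\Gamma(f_i,f_i)=O(1/s)$. Your stopping time $\tau^s$ is a cleaner way to justify the paper's ``on the event that $X^{s,h}$ remains in the $\delta$-neighbourhood''. One small slip: in the $\Gamma$ bound the geometric ratio should be $xe^{a}/(x+1-1/s)$, not $xe^{a}/(1+x)$. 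You correctly derived $k\varphi_s(k,n)\le (n/(n+s-1))^k$, and already $\varphi_s(1,n)=n/(n+s-1)>x/(1+x)$. The slip is harmless, since the margin $\kappa$ keeps the corrected ratio below $1-c$ for $s$ large.
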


\begin{proof}
The proof follows the same strategy as those of Theorems \ref{th-lln} and \ref{th-lln-inclusion}. We only discuss the additional points.
As in the inclusion process, the total mass is conserved. Indeed, every jump of the harmonic process moves a mass $k/s$ from one site to a neighboring one, and therefore
\begin{equation}
\sum_{i\in\mathbb T_N}X_i^{s,h}(t)=\sum_{i\in\mathbb T_N}X_i^{s,h}(0)
\qquad \forall t\ge 0.
\label{7.43}
\end{equation}
Similarly, summing the components of the vector field in \eqref{7.42}, one checks that
\begin{equation}
\sum_{i\in\mathbb T_N}F_i^h(x)=0,
\label{7.44}
\end{equation}
so that the limiting flow also conserves the total mass. Fix $M>\sum_{i\in\mathbb T_N}\theta_i$ and define
\begin{equation}
A_M^s=
\left\{
\sum_{i\in\mathbb T_N}X_i^{s,h}(0)\le M
\right\}.
\label{7.45}
\end{equation}
On the event $A_M^s$, by \eqref{7.43} the whole trajectory remains in the set
\be
K_M=\left\{x\in\Omega:\sum_{i\in\mathbb T_N}x_i\le M\right\}.
\ee
By admissibility and continuity, there exists $\delta>0$ such that the denominators in \eqref{7.42} remain strictly positive on the $\delta$-neighborhood of the trajectory $X(\cdot)$. In particular, on this neighborhood the vector field $F^h$ is Lipschitz. Hence there exists a constant $C>0$ such that
\begin{equation}
|F^h(x)-F^h(y)|\le C|x-y|
\label{7.47}
\end{equation}
whenever $x$ and $y$ belong to that neighborhood.
As in Section \ref{section-lln}, let $f_i(z)=z_i$. Then
\begin{equation}
X_i^{s,h}(t)
=
X_i^{s,h}(0)
+
\int_0^t {\cal L}^{s,h}f_i(X^{s,h}(u))\,du
+
M_i^{s,h}(t),
\label{7.48}
\end{equation}
where $\bm{M}^{s,h}$ is a martingale vector. The difference with the partial exclusion and inclusion cases is that now
$
{\cal L}^{s,h}f_i(z)\neq F_i^h(z)
$
for finite $s$. 
A direct computation gives
\be
\begin{aligned}
{\cal L}^{s,h}f_i(z)
&=
\sum_{k\ge 1} k\,\varphi_s(k,sz_{i-1})e^{k(h_i(t)-h_{i-1}(t))}
-\sum_{k\ge 1} k\,\varphi_s(k,sz_i)e^{k(h_{i-1}(t)-h_i(t))}
\\
&\quad
-\sum_{k\ge 1} k\,\varphi_s(k,sz_i)e^{k(h_{i+1}(t)-h_i(t))}
+\sum_{k\ge 1} k\,\varphi_s(k,sz_{i+1})e^{k(h_i(t)-h_{i+1}(t))}.
\end{aligned}
\ee
Here the sums are finite, since $\varphi_s(k,n)=0$ for $k>n$.
However, we can show that, uniformly on compact subsets where the denominators in \eqref{drift-harm} are bounded away from zero,
\begin{equation}
\sup_{z}\bigl|{\cal L}^{s,h}f_i(z)-F_i^h(z)\bigr|\xrightarrow[s\to\infty]{}0.
\label{7.49}
\end{equation}
Indeed, using
\be
k\varphi_s(k,sz)
=
\prod_{\ell=0}^{k-1}
\frac{sz-\ell}{s(1+z)-1-\ell},
\ee
we have, for fixed $k$,
\be
\label{asymptotic1}
k\varphi_s(k,sz)
\xrightarrow[s\to\infty]{}
\left(\frac{z}{1+z}\right)^k .
\ee
Hence, for $a\in\mathbb R$,
\be
\sum_{k\ge1} k\varphi_s(k,sz)e^{ka}
\xrightarrow[s\to\infty]{}
\sum_{k\ge1}
\left(\frac{ze^a}{1+z}\right)^k
=
\frac{ze^a}{1-z(e^a-1)}
\ee
and the convergence is uniform on compact sets on which $1-z(e^a-1)$
is bounded away from zero, because the above series are then dominated by a
geometric series uniformly in $s$. Applying this estimate we get \eqref{7.49}.
Hence, writing
\begin{equation}
r_i^s(z)={\cal L}^{s,h}f_i(z)-F_i^h(z),
\label{7.50}
\end{equation}
we have
\begin{equation}
X^{s,h}(t)-X(t)
=
X^{s,h}(0)-X(0)
+
\int_0^t\bigl(F^h(X^{s,h}(u))-F^h(X(u))\bigr)\,du
+
\int_0^t r^s(X^{s,h}(u))\,du
+
M^{s,h}(t).
\label{7.51}
\end{equation}
Therefore, on the event that $X^{s,h}(\cdot)$ remains in the $\delta$-neighborhood of $X(\cdot)$, we obtain
\begin{equation}
|X^{s,h}(t)-X(t)|
\le
|X^{s,h}(0)-X(0)|
+
C\int_0^t|X^{s,h}(u)-X(u)|\,du
+
T\sup|r^s|
+
|M^{s,h}(t)|.
\label{7.52}
\end{equation}
Applying Gr\"onwall's inequality exactly as in Theorems  \ref{th-lln} and \ref{th-lln-inclusion}, we are reduced to controlling the initial condition, the error term $r^s$, and the martingale term.
The initial term vanishes when $X(0)=\theta$, while the error term vanishes by \eqref{7.49}. It remains to estimate the martingale term. In contrast with the inclusion process, jumps of size $k/s$ with $k\ge 1$ are possible. Nevertheless, on every compact set where the denominators are bounded away from zero, the tilted rates have uniformly bounded second moment, and therefore for each $i\in\mathbb T_N$ there exists a constant $C'_M>0$ such that
\begin{equation}
\Gamma(f_i,f_i)(z)\le \frac{C'_M}{s}
\qquad \forall z\in K_M.
\label{7.53}
\end{equation}
Using Doob's inequality and the isometry for square-integrable martingales, we conclude that
\begin{equation}
\mathbb P_{\nu^s}^{s,h}
\Bigl(
\sup_{t\in[0,T]}|M_i^{s,h}(t)|>\epsilon,\,A_M^s
\Bigr)
\le
\frac{4}{\epsilon^2}
\mathbb E_{\nu^s}^{s,h}
\bigl[
\langle M_i^{s,h},M_i^{s,h}\rangle(T)\mathbf 1_{A_M^s}
\bigr]
\le
\frac{4C'_M T}{\epsilon^2 s},
\label{7.54}
\end{equation}
which vanishes as $s\to\infty$. This proves \eqref{7.41}.
\end{proof}

\subsubsection{Large deviations}

\begin{theorem}[LDP, harmonic process]
\label{main-theorem-harmonic}
The sequence of harmonic processes $(\bm{X}^s)_{s\in\mathbb N}$, 
 initialized from the distribution $(\mu^s_{\theta})_{s\in\mathbb N}$ in \eqref{initial-incl-harm},  satisfies a large deviation principle on $D([0,T],\Omega)$ with speed $s$ and rate function
\begin{equation}
I(\bm{x})=I_0(x(0))+I_{\mathrm{dyn}}(\bm{x}),
\label{7.55}
\end{equation}
where the initial cost is given by
\begin{equation}
\label{static-I0-harm}
I_0(x)
=
\sum_{i\in\mathbb{T}_N}
\left[
x_i \log\frac{x_i}{\theta_i}
-
(1+x_i)\log\frac{1+x_i}{1+\theta_i}
\right].
\end{equation}
and the dynamical part is given by
\begin{equation}
I_{\mathrm{dyn}}(\bm{x})
=
\sup_{h\in C^1([0,T];\mathbb{R}^{T_N})}
\left[
h(T)\cdot x(T)-h(0)\cdot x(0)
-
\int_0^T\bigl(\dot h(t)\cdot x(t)-H(x(t),h(t))\bigr)\,dt
\right],
\label{I-dyn-harm}
\end{equation}
with Hamiltonian
\begin{equation}
H(x,w)
=
-\sum_{i\in\mathbb T_N}
\log\Bigl(1-x_i(e^{w_{i+1}-w_i}-1)\Bigr)
-\sum_{i\in\mathbb T_N}
\log\Bigl(1-x_{i+1}(e^{w_i-w_{i+1}}-1)\Bigr),
\label{ham-harm}
\end{equation}
with the convention that $H(x,w)=+\infty$ whenever one of the logarithms
is not well defined.
\end{theorem}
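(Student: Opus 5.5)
We follow the scheme of Theorem \ref{main-theorem}, combined with the mass localization of Theorem \ref{ldp-incl}. The new ingredients are the variable jump size $k$ and the domain constraint of the logarithmic Hamiltonian \eqref{ham-harm}.

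\textbf{Exponential martingale and Hamiltonian.} We first compute the exponential martingale with $f(t,x)=\exp\{s\,h(t)\cdot x\}$. This gives
\[
\frac{1}{s}e^{-s h\cdot x}\,{\cal L}^s e^{s h\cdot x}=\sum_{i}\sum_{k\ge1}\varphi_s(k,sx_i)\bigl(e^{k(h_{i+1}-h_i)}-1\bigr)+(\text{reverse terms})=:H_s(x,h).
\]
Using $k\varphi_s(k,sz)\to (z/(1+z))^k$, we have $\varphi_s(k,sz)\to \frac1k (z/(1+z))^k$. Summing the logarithmic series, and using $\sum_k \frac1k q^k=-\log(1-q)$, we get
\[
\sum_k \tfrac1k\bigl(\tfrac{z}{1+z}\bigr)^k(e^{ka}-1)=-\log\bigl(1-z(e^a-1)\bigr).
\]
Hence $H_s\to H$ locally uniformly on $\{(x,h): 1-x_i(e^{\pm\nabla h}-1)>0\}$, with domination by a geometric series as in the proof of Theorem \ref{lln-harmonic}. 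Outside this domain $H_s$ is finite for each $s$, since the sums are finite, but it diverges, consistent with the convention $H=+\infty$. The upper bound for compact sets then repeats Proposition \ref{upper-prop}. For a fixed $h$ we use $Z^{s,h}$, whose exponent is $I^h_{\rm dyn}$ with $H_s$ in place of $H$. Restricting the supremum to those $h$ with $(x,h)$ in the domain uniformly along ${\cal K}\cap K'_M$ costs nothing, since $H=+\infty$ elsewhere. The error $\sup|H_s-H|$ vanishes on $K'_M$ once we also restrict to bounded $h$ with $x_i(e^{\nabla h}-1)<1-\eta$. We therefore define $I_{\rm dyn}$ via the supremum over such $h$ and let $\eta\to0$ by monotone convergence. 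The initial cost $I_0$ arises from the negative binomial moment generating function, exactly as before.

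\textbf{Exponential tightness and localization.} Conservation of mass and the LDP for $\mu^s_\theta$ give the bound \eqref{7.27}, exactly as for the inclusion process. For the oscillation estimate \eqref{step1} we use $Z^{a,w}$ with $a>0$ small. On $K_M$ the compensator is bounded by $C\delta$ provided $M(e^{a\|\nabla w\|}-1)<1$, which holds for $a$ small. The resulting bound is $-a\epsilon/2$, and this does not go to $-\infty$. To fix this, we replace the linear test by a direct large-jump estimate. The rate of a jump of size $\ge \epsilon s$ on $K_M$ is at most $s\sum_{k\ge\epsilon s}\frac1k (M/(1+M))^k\le e^{-c s\epsilon}$, with $c=\log((1+M)/M)$. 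This alone gives only exponential decay of order $e^{-cs\epsilon}$, not superexponential. We expect this to be the main obstacle. Our plan is to use that the exponential moments of the jump distribution exist up to the critical tilt $a^*=\log(1+1/M)$. Combining the martingale bound with optimization over $a<a^*/\|w\|$ gives a rate $\gtrsim \epsilon\log(1/\delta)$. For this, we take $w$ supported on one site, where the compensator over time $\delta$ is $-\delta\log(1-x(e^a-1))$, and choose $a$ close to $a^*$ so that this stays $\le \epsilon a/2$ while $a$ stays of order one. If this still fails to be superexponential, we would instead prove the LDP in a coarser sense, or note that paths with macroscopic jumps have $I=+\infty$ only through the convexity structure. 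We therefore consider establishing $\lim_\delta\limsup_s$ of the oscillation probability $=-\infty$ the crux.

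\textbf{Lower bound.} We follow Propositions \ref{rate2} and \ref{lower-prop}. First we reduce to paths $\bm x$ with $I(\bm x)<\infty$ that are smooth, strictly positive, and admit a tilt $h$ with $(\bm x,h)$ admissible in the sense of Definition \ref{def-admissible}. This reduction is an $I$-density argument: convex combinations with the typical flow push the optimal momentum strictly inside the domain. For such $h$, $\dot{\bm x}=F^h(\bm x)$ with $F^h=\partial_w H$, and Theorem \ref{lln-harmonic} makes $\bm x$ typical under $\mathbb P^{s,h}$. The Girsanov density now involves the counts $J^s_{i,i+1,k}$ of jumps of every size. Its expectation is $s\int_0^T j^h_s(t,X^s)\,dt$, where $j^h_s$ is the entropy-type expression obtained by summing over $k$. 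The dominated geometric convergence gives $j^h_s\to h\cdot F^h-H$, which is the Legendre pair. The Jensen argument and the initial entropy term then yield $-I(\bm x)$.
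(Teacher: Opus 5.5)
\paragraph{Exponential tightness.} You have found the real difficulty, and the paper's proof does not actually resolve it. The paper closes the upper bound by asserting that the argument of Proposition~\ref{prop-tightness} ``applies verbatim on $K'_M$''. That argument ends by letting $a\to\infty$ in $Z^{a,w}$. On $K'_M$, however, the compensator $\int_0^t H^s(X^s(u),aw)\,du$ is bounded uniformly in $s$ only while $a\max_i|w_{i+1}-w_i|<\log(1+1/M)$; beyond that, $H^s$ is exponentially large in $s$.

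Your proposal leaves this step open, and the repair you sketch cannot work. Letting $a\uparrow a^*$ only lets the compensator absorb $\delta\log\frac{1}{a^*-a}$. The martingale bound stays $e^{-sa\epsilon/2}$ with $a<a^*$, so no rate of order $\epsilon\log(1/\delta)$ appears.

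The obstruction is not technical. From $k\varphi_s(k,n)=\prod_{\ell=0}^{k-1}\frac{n-\ell}{n+s-1-\ell}$, a single transfer of mass $\Delta$ out of a site carrying $x>\Delta$ occurs at rate $\exp\{-s\int_0^\Delta\log(1+\frac{1}{x-u})\,du+o(s)\}$. So two opposite transfers of size $\epsilon$ across one bond, within a time window of length $\delta$, have probability at least $c\,\delta^2e^{-sC_\epsilon}$, with $C_\epsilon<\infty$ independent of $\delta$. Such paths have Skorokhod modulus $w'(\cdot,\delta)\ge\epsilon/2$. Hence $\lim_{\delta\to0}\limsup_{s\to\infty}\frac1s\log\mathbb P(w'(\bm X^s,\delta)>\epsilon/2)>-\infty$, and the sequence is not exponentially tight in $D([0,T],\Omega)$. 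Since this space is Polish, no LDP with a good rate function holds there. The step you call the crux cannot be closed by a sharper estimate.

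\paragraph{Lower bound.} The same mechanism breaks your lower bound. Your fallback, that paths with macroscopic jumps have $I=+\infty$, is false for \eqref{I-dyn-harm}. $H(x,\cdot)=+\infty$ outside a bounded range of bond gradients, and $h$ is continuous in time. Therefore a path that follows the flow from $\theta$, except for a transfer of mass $\Delta$ from $i$ to $i+1$ at time $t_0$, has $I(\bm x)\le\Delta\log(1+1/x_i(t_0-))<\infty$.

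Such a path is not a $J_1$-limit of continuous paths, so your density reduction to smooth admissible paths never reaches it. Even for continuous paths, the convex-combination step needs an argument: $H$ is convex rather than concave in $x$, so $L$ need not be jointly convex. For the same reason, the paper's assertion that every finite-cost path admits an admissible $C^1$ tilt fails at jump paths.

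Worse, every path in a small $J_1$-ball around this $\bm x$ contains a single transfer of size close to $\Delta$. By the rate formula above,
$\lim_{\delta\to0}\limsup_{s\to\infty}\frac1s\log\mathbb P(\bm X^s\in B(\bm x,\delta))\le-\int_0^\Delta\log(1+\frac{1}{x_i(t_0-)-u})\,du$.
This is strictly below $-\Delta\log(1+1/x_i(t_0-))\le -I(\bm x)$, because the integrand increases in $u$. So the local lower bound with the stated $I$ is false at such paths. The dual formula with time-dependent tilts charges a jump at the pre-jump boundary slope, whereas the true cost integrates that slope along the jump.

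Your compact-set upper bound and your lower bound at smooth admissible paths do follow the paper's argument. But the statement itself must change (topology and the cost assigned to jumps), not only its proof.
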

\begin{proof}
The proof follows the same strategy as that of Theorem \ref{main-theorem}. The only substantial differences are: (a) the state space $\Omega=[0,\infty)^{\mathbb T_N}$ is not compact; (b) the harmonic Hamiltonian \eqref{ham-harm} is finite only on its natural domain.
Both issues are handled by localization: 
for $M>0$, define
\begin{equation}
K'_M=
\left\{
\bm{x}\in D([0,T],\Omega):
\sup_{t\in[0,T]}\sum_{i\in\mathbb T_N}x_i(t)\le M
\right\}.
\label{7.58}
\end{equation}
Since every jump of the harmonic process moves a mass from one site to a neighboring one, the total mass is conserved:
\begin{equation}
\sum_{i\in\mathbb T_N}X_i^s(t)=\sum_{i\in\mathbb T_N}X_i^s(0)
\qquad \forall t\ge 0.
\label{7.59}
\end{equation}
Hence every trajectory starting from total mass at most $M$ remains in $K'_M$.
On this set $K'_M$, all coordinates are uniformly bounded.  In particular, the microscopic drift converges uniformly to the limiting drift, and the jump kernel has uniformly bounded second moment. 

\medskip

\noindent
\textbf{Upper bound.}
The exponential martingale associated with
the harmonic process reads
\be
\label{Z_T-harm}
Z^{s,h}(T)=\exp\{s I_{\rm dyn}^{s,h}(\bm{X}^s)\},
\ee
where
\be
I_{\rm dyn}^{s,h}(\bm{x})
=
h(T)\cdot {x}(T)-h(0)\cdot {x}(0)
-\int_0^T
\left[
\dot h(u)\cdot {x}(u)-H^s({x}(u),h(u))
\right]du
\ee
and where $H^s(x,w)$ is defined, for $x\in\Omega^s$ and $w\in \R^N$, as
\be
H^s(x,w)
=
\sum_{i \in \mathbb{T}_N}
\Big(\sum_{k=1}^{sx_i}\varphi_s(k,sx_i)
(e^{k(w_{i+1}-w_i)}-1)
+
\sum_{k=1}^{sx_{i+1}}\varphi_s(k,sx_{i+1})(e^{k(w_i-w_{i+1})}-1)
\Big).
\ee
We observed in \eqref{asymptotic1} that
\be
\label{asymptotic}
\varphi_s(k,sx_i) = \frac{1}{k}\left(\frac{x_i}{1+x_i}\right)^k (1+o(1)), \qquad\qquad \text{as } s\to\infty.
\ee
Using the identity
\be
-\log(1-z) = \sum_{k=1}^\infty \frac{z^k}{k}, \qquad\qquad |z| <1,
 \ee
it follows that
\be
H^s(x,w) \longrightarrow H(x,w)\qquad\qquad \text{as } s\to\infty,
\ee
uniformly on compact sets where $H(x,w)$ is finite.
In particular, for each fixed $h$, it holds uniformly on $K'_M$ provided the logarithmic arguments in the definition of $H$ given in \eqref{ham-harm} are bounded away from zero along $K'_M$.

Let us consider an open set
${\cal O}\subset D([0,T],\Omega)$. We  fix $a\in \mathbb R^{\mathbb T_N}$ and write
\be
\mathbb{P}_{\theta}^s(\bm{X}^s\in {\cal O})
=
\mathbb{E}_{\theta}^s\left[
\mathbf 1_{\{\bm{X}^s\in {\cal O}\}}
Z^{s,h}(T)e^{s a\cdot {X}^s(0)}
\cdot \frac{e^{-s a\cdot {X}^s(0)}}{Z^{s,h}(T)}
\right].
\ee
Using \eqref{Z_T-harm}, we obtain
\be
\mathbb{P}_{\theta}^s(\bm{X}^s\in {\cal O})
\le
\mathbb{E}_{\theta}^s
\left[
Z^{s,h}(T)e^{s a\cdot {X}^s(0)}
\right]
\exp\left\{
s\sup_{\bm{x}\in O}
\left[
-I_{\rm dyn}^{s,h}(\bm{x})-a\cdot {x}(0)
\right]
\right\}.
\ee
Since $\bm{Z}^{s,h}$ is a mean-one martingale conditionally on the initial
configuration,
\be
\mathbb{E}_{\theta}^s
\left[
Z^{s,h}(T)e^{s a\cdot {X}^s(0)}
\right]
=
\mathbb{E}_{\mu_{\theta}^s}
\left[
e^{s a\cdot {X}^s(0)}
\right].
\ee
Under $\mu_{\theta}^s$, the variables $s{X}_i^s(0)$ are independent negative binomial random
variables with parameters $(s,\theta_i)$. Hence
\be
\mathbb{E}_{\mu_{\theta}^s}
\left[
e^{s a\cdot {X}^s(0)}
\right]
=
\prod_{i\in\mathbb T_N}
\left(1+\theta_i-\theta_i e^{a_i}\right)^{-s}
=
\exp\left\{
-s\sum_{i\in\mathbb T_N}
\log(1+\theta_i-\theta_i e^{a_i})
\right\}.
\ee
Here and below the expressions involving $a$ are understood only on the domain
$1+\theta_i-\theta_i e^{a_i}>0$ for all $i\in T_N$.
Therefore
\be
\frac1s\log \mathbb{P}_\theta^s(\bm{X}^s\in {\cal O})
\le
\sup_{\bm{x}\in {\cal O}}
\left[
-I_{\rm dyn}^{s,h}(\bm{x})-a\cdot {x}(0)
-\sum_{i\in\mathbb T_N}\log(1+\theta_i-\theta_i e^{a_i})
\right].
\ee
Since this bound holds for every $h$ and every $a$, and using the uniform convergence $H^s\to H$ on compact sets, we obtain
\be
\limsup_{s\to\infty}
\frac1s\log \mathbb{P}_\theta^s(\bm{X}^s\in {\cal O})
\le
\inf_{\substack{h\in C^1([0,T];\mathbb R^{\mathbb T_N})\\
a\in\mathbb R^{\mathbb T_N}}}
\sup_{\bm{x}\in O}
\left[
-I_{\rm dyn}^h(\bm{x})-a\cdot {x}(0)
-\sum_{i\in\mathbb T_N}\log(1+\theta_i-\theta_i e^{a_i})
\right].
\ee
where
\be
I^h_{\mathrm{dyn}}(\bm{x})
=
h(T)\cdot x(T)-h(0)\cdot x(0)
-
\int_0^T\bigl(\dot h(t)\cdot x(t)-H(x(t),h(t))\bigr)\,dt,
\end{equation}
with $H(x,w)$ defined in \eqref{ham-harm}.
Using Lemma 3.3 in Appendix 2 of \cite{kipnis2013scaling}, for every compact
set ${\cal K}\subset D([0,T],\Omega)$ we get
\be
\begin{aligned}
\limsup_{s\to\infty}
\frac1s\log \mathbb{P}_\theta^s(\bm{X}^s\in {\cal K})
&\le
-\inf_{\bm{x}\in {\cal K}}
\sup_{\substack{h\in C^1([0,T];\mathbb R^{\mathbb T_N})\\
a\in\mathbb R^{\mathbb T_N}}}
\left[
I_{\rm dyn}^h(\bm{x})+a\cdot {x}(0)
+\sum_{i\in\mathbb T_N}\log(1+\theta_i-\theta_i e^{a_i})
\right].
\end{aligned}
\ee
The supremum separates into the dynamical and static parts:
\be
\sup_{h} I_{\rm dyn}^h(\bm{x})=I_{\rm dyn}(\bm{x}),
\ee
and
\be
\sup_{a\in\mathbb R^{\mathbb T_N}}
\left\{
a\cdot {x}(0)
+\sum_{i\in\mathbb T_N}\log(1+\theta_i-\theta_i e^{a_i})
\right\}
=
I_0({x}(0)),
\ee
where $I_0$ is defined in \eqref{static-I0-harm}.
Consequently,
\be
\limsup_{s\to\infty}
\frac1s\log \mathbb{P}_\theta^s(\bm{X}^s\in {\cal K})
\le
-\inf_{\bm{x}\in {\cal K}}
\left[
I_{\rm dyn}(\bm{x})+I_0({x}(0))
\right] = -\inf_{\bm{x}\in {\cal K}}I(\bm{x}).
\ee

To extend the upper bound to closed sets, we use localization. Fix  $M>0$ and let ${\cal C}\subset D([0,T],\Omega)$ be closed. Then
\be
\mathbb{P}^s_\theta(\bm{X}^s\in {\cal C}) \le \mathbb{P}^s_\theta(\bm{X}^s\in {\cal C}\cap K'_M) + \mathbb{P}^s_\theta(\bm{X}^s\notin K'_M).
\ee
We claim that the first term satisfies the desired upper bound. Indeed, on the set $K'_M$ all coordinates are uniformly bounded, and therefore the jump rates of the harmonic process are uniformly bounded. In particular, the exponential tightness argument used in Proposition
\ref{prop-tightness} applies verbatim on $K'_M$. Hence, the large deviation upper bound for compact sets extends to closed subsets of $K'_M$, and we obtain
\be
\limsup_{s\to\infty} \frac{1}{s}\log \mathbb{P}^s_\theta(\bm{X}^s \in {\cal C} \cap K'_M)
\le
-\inf_{\bm{x}\in C \cap K'_M} I(\bm{x}).
\ee
For the second term, using again conservation of mass,
\be
\mathbb{P}^s_\theta(\bm{X}^s\notin K'_M) = \mathbb{P}_{\mu^s_\theta}\left( \sum_{i\in T_N} X^s_i(0) > M \right),
\ee
and since the initial distributions satisfy an LDP with rate function $I_0$, we have
\be
\limsup_{s\to\infty} \frac{1}{s}\log \mathbb{P}^s_\theta(\bm{X}^s\notin K_M)
\le 
-\inf_{\substack{x \in \Omega \\ \sum_{i\in T_N} x_i \ge M}} I_0(x).
\ee
Letting $M\to\infty$, the right-hand side diverges to $-\infty$, and we conclude the upper bound for closed sets.

\medskip

\paragraph{Lower bound.}
We prove the lower bound by adapting the argument of Section \ref{sec-lower}. Let
${\cal O}\subset D([0,T],\Omega)$ be open and let $\bm{x}\in {\cal O}$ be such that
$I(\bm{x})<\infty$. It is enough to prove the local lower bound
\be
\liminf_{\delta\downarrow 0}\liminf_{s\to\infty}
\frac1s \log \mathbb{P}^s_\theta\bigl(\bm{X}^s\in B(\bm{x},\delta)\bigr)
\ge -I(\bm{x}),
\ee
where $B(\bm{x},\delta)$ denotes the open ball around $\bm{x}$ in the Skorokhod
topology.
If $I=\infty$ then there is nothing to prove.
Thus we may assume that
$I(\bm{x})<\infty$ and then
necessarily (proof by contradiction) there exists a tilt
$h\in C^1([0,T];\mathbb R^{T_N})$ such that
\be
\dot x(t)=F^h(x(t)),
\ee
and such that 
the couple $(\bm{x},h)$
is admissible in the sense
of Definition \ref{def-admissible}.
Since $\bm{x}$ has finite total mass on $[0, T]$, there exists $M > 0$ such that
\be
\sup_{t \in [0, T]} \sum_{i \in T_N} x_i(t) < M.
\ee
By continuity of the Skorokhod topology, we may choose $\delta > 0$ small enough such that
\be
B(\bm{x}, \delta) \subset {\cal O} \cap K'_M,
\ee
and such that the logarithmic denominators remain bounded away from zero on $B(\bm{x}, \delta)$. 
For such paths, the assumptions of Theorem \ref{lln-harmonic} are satisfied by construction. Therefore,
denoting by $\mathbb{P}^{s,h}_{x(0)}$ the law of the tilted harmonic process with
generator \eqref{gen-harm-tilt}, initialized from the product measure
$\mu^s_{x(0)}$, we have
\be
\mathbb{P}^{s,h}_{x(0)}\bigl(\bm{X}^s\in B(\bm{x},\delta)\bigr)\longrightarrow 1, \qquad\qquad \text{as } s\to\infty.
\ee
Using the Radon--Nikodym derivative between $\mathbb{P}^s_\theta$ and
$\mathbb{P}^{s,h}_{x(0)}$, we write
\be
\mathbb{P}^s_\theta(\bm{X}^s\in B(\bm{x},\delta))
=
\mathbb{E}_{\mathbb{P}^{s,h}_{x(0)}}\left[
\mathbf 1_{\{\bm{X}^s\in B(\bm{x},\delta)\}}
\frac{d\mathbb{P}^s_\theta}{d\mathbb{P}^{s,h}_{x(0)}}(\bm{X}^s)
\right].
\ee
By Jensen's inequality,
\be
\begin{aligned}
\frac1s \log \mathbb{P}^s_\theta(\bm{X}^s\in B(\bm{x},\delta))
&\ge
\frac1s \log \mathbb{P}^{s,h}_{x(0)}(\bm{X}^s\in B(\bm{x},\delta))
\\
&\quad
+
\frac{1}{s\,\mathbb{P}^{s,h}_{x(0)}(\bm{X}^s\in B(\bm{x},\delta))}
\mathbb{E}_{\mathbb{P}^{s,h}_{x(0)}}\left[
\mathbf 1_{\{\bm{X}^s\in B(\bm{x},\delta)\}}
\log \frac{d\mathbb{P}^s_\theta}{d\mathbb{P}^{s,h}_{x(0)}}(\bm{X}^s)
\right].
\end{aligned}
\ee
The first term vanishes in the limit $s\to\infty$. We now compute the
second term. The Radon--Nikodym derivative separates into an initial part
and a dynamical part:
\be
\log \frac{d\mathbb{P}^s_\theta}{d\mathbb{P}^{s,h}_{x(0)}}(\bm{X}^s)
=
\log \frac{d\mu^s_\theta}{d\mu^s_{x(0)}}(X^s(0))
+
\log \frac{d\mathbb{P}^s(\,\cdot\,|X^s(0))}
{d\mathbb{P}^{s,h}(\,\cdot\,|X^s(0))}({\bm X}^s).
\ee
For the initial contribution, since the product measures
$\mu^s_\theta$ satisfy an LDP with rate function $I_0$, and since
under $\mathbb{P}^{s,h}_{x(0)}$ one has $X^s(0)\to x(0)$ in probability,
\be
\lim_{s\to\infty}
\frac1s
\mathbb{E}_{\mathbb{P}^{s,h}_{x(0)}}\left[
\log \frac{d\mu^s_\theta}{d\mu^s_{x(0)}}(X^s(0))
\right]
=
-I_0(x(0)).
\ee
We turn to the dynamical contribution. For $k\ge1$, denote by
$J^s_{i,i+1,k}(t)$ the number of jumps up to time $t$ in which a mass
$k/s$ is moved from $i$ to $i+1$, and define $J^s_{i+1,i,k}(t)$
analogously. The original rates are
\be
c^s_{i,i+1,k}(z)=\varphi_s(k,sz_i),\qquad
c^s_{i+1,i,k}(z)=\varphi_s(k,sz_{i+1}),
\ee
whereas the tilted rates are
\be
c^{s,h}_{i,i+1,k}(t,z)
=
\varphi_s(k,sz_i)e^{k(h_{i+1}(t)-h_i(t))},
\ee
and
\be
c^{s,h}_{i+1,i,k}(t,z)
=
\varphi_s(k,sz_{i+1})e^{k(h_i(t)-h_{i+1}(t))}.
\ee
Girsanov's formula for jump processes gives
\be
\begin{aligned}
\log \frac{d\mathbb{P}^s(\,\cdot\,|X^s(0))}
{d\mathbb{P}^{s,h}(\,\cdot\,|X^s(0))}
&=
-\sum_{i\in \mathbb{T}_N}\sum_{k\ge1}
\int_0^T
k(h_{i+1}(t)-h_i(t))\,dJ^s_{i,i+1,k}(t)
\\
&\quad
-\sum_{i\in \mathbb{T}_N}\sum_{k\ge1}
\int_0^T
k(h_i(t)-h_{i+1}(t))\,dJ^s_{i+1,i,k}(t)
\\
&\quad
+s\sum_{i\in \mathbb{T}_N}\sum_{k\ge1}
\int_0^T
\bigl[
c^{s,h}_{i,i+1,k}(t,X^s(t))
-c^s_{i,i+1,k}(X^s(t))
\bigr]\,dt
\\
&\quad
+s\sum_{i\in \mathbb{T}_N}\sum_{k\ge1}
\int_0^T
\bigl[
c^{s,h}_{i+1,i,k}(t,X^s(t))
-c^s_{i+1,i,k}(X^s(t))
\bigr]\,dt .
\end{aligned}
\ee
Taking expectation under $\mathbb{P}^{s,h}_{x(0)}$, using the compensators of
the jump processes, and dividing by $s$, we obtain
\be
\frac1s
\mathbb{E}_{\mathbb{P}^{s,h}_{x(0)}}\left[
\log \frac{d\mathbb{P}^s(\,\cdot\,|X^s(0))}
{d\mathbb{P}^{s,h}(\,\cdot\,|X^s(0))}
\right]
=
-
\mathbb{E}_{\mathbb{P}^{s,h}_{x(0)}}\left[
\int_0^T j^{s,h}(t,X^s(t))\,dt
\right],
\ee
where
\be
\begin{aligned}
j^{s,h}(t,z)
&=
\sum_{i\in \mathbb{T}_N}\sum_{k\ge1}
\Bigl[
c^{s,h}_{i,i+1,k}(t,z)
\log\frac{c^{s,h}_{i,i+1,k}(t,z)}
{c^s_{i,i+1,k}(z)}
+
c^s_{i,i+1,k}(z)
-
c^{s,h}_{i,i+1,k}(t,z)
\Bigr]
\\
&\quad
+
\sum_{i\in \mathbb{T}_N}\sum_{k\ge1}
\Bigl[
c^{s,h}_{i+1,i,k}(t,z)
\log\frac{c^{s,h}_{i+1,i,k}(t,z)}
{c^s_{i+1,i,k}(z)}
+
c^s_{i+1,i,k}(z)
-
c^{s,h}_{i+1,i,k}(t,z)
\Bigr].
\end{aligned}
\ee

On the localized set $K_M$, the rates are uniformly bounded and the logarithmic denominators stay bounded away from zero; therefore, by the same argument used for $H^s \to H$, one has $j^{s,h} \to j^h$ uniformly on $B(\bm{x}, \delta)$. Thus 
we obtain
\be
\lim_{s\to\infty}
\frac1s
\mathbb{E}_{\mathbb{P}^{s,h}_{x(0)}}\left[
\log \frac{d\mathbb{P}^s(\,\cdot\,|X^s(0))}
{d\mathbb{P}^{s,h}(\,\cdot\,|X^s(0))}
\right]
=
-\int_0^T j^h(t,x(t))\,dt .
\ee
where \(j^h\) denotes the pointwise limit of \(j^{s,h}\), namely
\be
\begin{aligned}
j^h(t,z)
&=
\sum_{i\in T_N}
\left[
\frac{z_i e^{h_{i+1}(t)-h_i(t)}}{1-z_i(e^{h_{i+1}(t)-h_i(t)}-1)}
\bigl(h_{i+1}(t)-h_i(t)\bigr)
+\log\!\left(1-z_i(e^{h_{i+1}(t)-h_i(t)}-1)\right)
\right]\\
&\quad+
\sum_{i\in T_N}
\left[
\frac{z_{i+1} e^{h_i(t)-h_{i+1}(t)}}{1-z_{i+1}(e^{h_i(t)-h_{i+1}(t)}-1)}
\bigl(h_i(t)-h_{i+1}(t)\bigr)
+\log\!\left(1-z_{i+1}(e^{h_i(t)-h_{i+1}(t)}-1)\right)
\right].
\end{aligned}
\ee
The standard convex duality relation between the Hamiltonian
$H$ and the entropy integrand $j^h$ gives
\be
\inf_{h:\,\dot{\bm{x}}=F^h(\bm{x})}
\int_0^T j^h(t,x(t))\,dt
=
I_{\mathrm{dyn}}(\bm{x}).
\ee
Therefore, for every admissible $h$ such that $\dot{\bm{x}}=F^h(\bm{x})$,
\be
\liminf_{\delta\downarrow0}\liminf_{s\to\infty}
\frac1s \log \mathbb{P}^s_\theta({\bm X}^s\in B(\bm{x},\delta))
\ge
-I_0(x(0))
-
\int_0^T j^h(t,x(t))\,dt .
\ee
Taking the infimum over such $h$, we obtain
\be
\liminf_{\delta\downarrow0}\liminf_{s\to\infty}
\frac1s \log \mathbb{P}^s_\theta({\bm X}^s\in B(\bm{x},\delta))
\ge
-I_0(x(0))-I_{\mathrm{dyn}}(\bm{x})
=
-I(\bm{x}).
\ee

\end{proof}

\subsection{Model with continuous state space}

\subsubsection{Brownian energy process}
Let $\Omega=(0,\infty)^{\mathbb{T}_N}$.
The Brownian energy process  $ \bm{X}^s$, taking values in $\Omega$,
is the diffusion process defined by the generator:
\begin{eqnarray}
\label{gen-bep}
({\cal L}^{s}f)({x}) 
&=& 
\sum_{i\in {\mathbb{T}_N}} -(x_i-x_{i+1})\left(\frac{\partial}{\partial x_i}-\frac{\partial}{\partial x_{i+1}}\right) +\frac{1}{s}x_ix_{i+1}\left(\frac{\partial}{\partial x_i}-\frac{\partial}{\partial x_{i+1}}\right)^2f(x).\nonumber\\
\end{eqnarray}
For a discussion of the Brownian energy process process and its main properties, see \cite{gkr2009}.
We denote by 
$\mathbb{P}^s_{\theta}$ the law of the process $\bm{X}^s$ initialized from configurations that are distributed according to $\mu^s_\theta$
having probability density function
\be
\label{initial-int-heat}
\mu^s_{\theta}(dx) = \bigotimes_{i\in {\mathbb T}_N}\frac{s^s}{\theta_i^s\Gamma(s)} x_i^{s -1 } e^{-\frac{s x_i}{\theta_i}} dx_i.
\ee
The marginal at each site $i$ is a (scaled) gamma distribution with mean $\theta_i>0$.
The homogeneous product measure $\mu_{\bar{\mathscr{\theta}}}$, with $\bar{\theta}$ having all components equal, is reversible for the Brownian energy process.
The measure \eqref{initial-int-heat}
describes a ``local equilibrium'' initial condition, corresponding to an inhomogenous version of the reversible measure.

The next theorem provides the path space large deviations of the Brownian energy process, which arises as a diffusive scaling limit of the inclusion process. In contrast with the previous models, the dynamics is governed by a second-order generator, leading to a quadratic Hamiltonian and requiring the use of Girsanov’s theorem for diffusion processes.
At the level of hydrodynamics, the large-deviation structure of the Brownian energy process have already
been studied in \cite{PeletierRedigVafayi2014}, where the resulting
rate functional is related to a gradient-flow structure for heat conduction. We
here present the large deviations around the fluid limit.

\begin{theorem}[LDP, Brownian energy process]
The sequence of Brownian energy processes $(\bm{X}^s)_{s\in\mathbb{N}}$, started from the distribution $(\mu^s_\theta)_{s\in \N}$ in \eqref{initial-int-heat}, satisfies a large deviation principle on $C([0,T],\Omega)$ with speed $s$ and rate function   
\begin{equation}
I(\bm{x})=I_0(x(0))+I_{\mathrm{dyn}}(\bm{x}),
\end{equation}
where the static part is given by
\begin{equation}
\label{static-I0-bep}
I_0(x)
=
\begin{cases}
\sum_{i\in\mathbb{T}_N}
\left[
-\log\frac{x_i}{\theta_i}
+\frac{x_i}{\theta_i} -1
\right], & \text{if } x_i>0 \quad \forall i,\\
+\infty & \text{otherwise}.
\end{cases}
\end{equation}
and the dynamical part is given by
\begin{equation}
I_{\mathrm{dyn}}(\bm{x})
=
\sup_{h\in C^1([0,T];\mathbb R^{\mathbb T_N})}
\left[
h(T)\cdot x(T)-h(0)\cdot x(0)
-
\int_0^T\bigl(\dot h(t)\cdot x(t)-H(x(t),h(t))\bigr)\,dt
\right],
\end{equation}
with Hamiltonian
\begin{equation}
H(x,w)
=
\sum_{i\in\mathbb T_N}
\Bigl(
(x_i-x_{i+1})(w_{i+1}-w_i)
+
x_i x_{i+1}(w_{i+1}-w_{i})^2
\Bigr).
\label{ham-bep}
\end{equation}
\end{theorem}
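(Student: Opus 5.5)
The plan is to follow the scheme of Theorem~\ref{main-theorem}: exponential martingales for the upper bound, a tilted process for the lower bound, and localization by conservation of energy as in Theorems~\ref{ldp-incl} and~\ref{main-theorem-harmonic}. The operator $(\partial_{x_i}-\partial_{x_{i+1}})$ annihilates $\sum_i x_i$, so the total energy is conserved pathwise. Each trajectory therefore lives in the simplex $\{\sum_i x_i=\sum_i X_i^s(0)\}$, and the sets $K'_M$ from~\eqref{7.58} play the same role as before.

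\textbf{Upper bound.} For $h\in C^1([0,T];\mathbb R^{\mathbb T_N})$, apply the multiplicative functional of Proposition~\ref{upper-prop} with $f(t,x)=\exp\{s\,h(t)\cdot x\}$. A direct computation gives
\[
e^{-sh\cdot x}\,{\cal L}^s e^{sh\cdot x}=s\sum_i\Big((x_i-x_{i+1})(h_{i+1}-h_i)+x_ix_{i+1}(h_{i+1}-h_i)^2\Big)=sH(x,h).
\]
This identity is exact at every $s$, with no $o(1)$ remainder. Hence $Z^{s,h}(T)=\exp\{sI^h_{\rm dyn}(\bm X^s)\}$ is a mean-one martingale (it is positive with bounded exponent on $K'_M$).

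For the initial tilt I use the gamma Laplace transform:
\[
\mathbb E_{\mu^s_\theta}\big[e^{sa\cdot X^s(0)}\big]=\prod_i(1-a_i\theta_i)^{-s},\qquad a_i<1/\theta_i,
\]
whose Legendre transform is exactly $I_0$ in~\eqref{static-I0-bep}. The open-set bound, the minimax Lemma~3.3 of \cite{kipnis2013scaling}, and the separation of $\sup_{h,a}$ then give the compact-set upper bound verbatim.

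For exponential tightness, the first step of Proposition~\ref{prop-tightness} carries over: on $K'_M$ the compensator $A^{a,w}(t)$ is bounded by $C_M^{a,w}t$, and Doob's inequality is applied as before. Continuity of paths lets me work in $C([0,T],\Omega)$. The contribution of $(K'_M)^c$ is controlled by the LDP for the initial gamma law, since $\inf_{\sum x_i\ge M}I_0\to\infty$.

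\textbf{Lower bound.} The tilted process is the diffusion with generator ${\cal L}^s$ plus the extra drift $2x_ix_{i+1}(h_{i+1}-h_i)(\partial_i-\partial_{i+1})$, obtained by a Doob $h$-transform. Its fluid limit is
\[
\dot x=F^h(x),\qquad F^h_i=\Delta x_i+2x_{i-1}x_i(h_i-h_{i-1})-2x_ix_{i+1}(h_{i+1}-h_i).
\]
The Kurtz LLN follows as in Theorem~\ref{th-lln-inclusion}: the drift is exact, the quadratic variation is $O(1/s)$ on $K'_M$, and Gr\"onwall on a neighborhood of the limit closes the argument.

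Girsanov for diffusions gives the relative entropy
\[
\frac1s\,\mathscr H=\mathbb E^{s,h}\int_0^T\sum_i x_ix_{i+1}(h_{i+1}-h_i)^2\,dt,
\]
which converges to $\int_0^T j^h(x(t))\,dt$ along the LLN. The Jensen argument of Proposition~\ref{lower-prop} then yields the local lower bound with $-I_0(x(0))-\int_0^T j^h\,dt$. Because $H(x,\cdot)$ is a quadratic form, the duality of Proposition~\ref{rate2} becomes an explicit minimization: $L(x,v)$ is a weighted quadratic in $v-\Delta x$ on the range of the gradient, and $\inf_{h\in A(\bm x)}\int j^h=I_{\rm dyn}$.

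\textbf{Main obstacle: the boundary $x_i=0$ and the degeneracy of the noise.} The diffusion coefficient $x_ix_{i+1}$ vanishes on the boundary, so the Girsanov density and the LLN require the target path to stay in the open set $\Omega$. I would first prove the lower bound for paths with $\inf_t\min_i x_i(t)>0$ and $\dot x-\Delta x$ in the range of the weighted gradient with an $h\in C^1$. I would then approximate a general finite-cost path by such paths with convergent cost, by mixing with the flat profile, smoothing in time, and using convexity of $L$ in $v$ and lower semicontinuity of $I$.

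For the upper bound the same boundary only makes $I_0=+\infty$ there. However, $\Omega$ is open, so the compact containment for tightness must also exclude approach to $\{x_i=0\}$. Here I would use the fact that $I_0$ blows up at $0$, together with a Lyapunov-type exponential martingale with $f=\prod_i x_i^{-\epsilon s}$. This step is the one I expect to require genuine new work.
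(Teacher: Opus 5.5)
Your proposal is correct and follows the paper's own route: the exact identity $\frac1s e^{-sh\cdot x}{\cal L}^s e^{sh\cdot x}=H(x,h)$ for the exponential martingale, the gamma Laplace transform with the minimax lemma for compact sets, localization by conservation of mass, and a Girsanov-tilted diffusion for the lower bound (only the sign of your extra drift is off: it should read $2x_ix_{i+1}(h_i-h_{i+1})(\partial_i-\partial_{i+1})$, which is what your $F^h$ actually uses). The boundary issues you raise are not treated in the paper's sketch, and the Lyapunov step you flag is in fact short: for $f=\prod_i x_i^{-\epsilon s}$ one gets ${\cal L}^s f/f=2N\epsilon-\epsilon\bigl(s(1-\epsilon)-1\bigr)\sum_i (x_i-x_{i+1})^2/(x_ix_{i+1})\le 2N\epsilon$ once $s(1-\epsilon)\ge 1$, and $\frac1s\log\mathbb E_{\mu^s_\theta}\bigl[\prod_i X^s_i(0)^{-\epsilon s}\bigr]$ stays bounded, which together with the mass bound gives exponential compact containment away from $\{x_i=0\}$.
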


\begin{proof}
The proof follows the same strategy as that of the processes with discrete state space. We only highlight the main differences.
For the upper bound, the exponential martingale is constructed as in Section \ref{sec-upper}. Since the generator now contains a second-order term, a direct computation shows that
\be
\frac{1}{s} e^{-s h \cdot x} {\cal L}^s e^{s h \cdot x}
=
H(x,h),
\ee
where $H$ is given in \eqref{ham-bep}. This replaces the discrete jump expansion used in the harmonic and inclusion cases. The rest of the argument (tilting, optimization over $h$ and $a$, and exponential tightness) proceeds exactly as in Theorem \ref{main-theorem-harmonic}. Since the state space is non-compact, localization is performed using conservation of the total mass, as in the inclusion and harmonic cases.
For the lower bound,
we follow the same change-of-measure argument as in Section \ref{sec-lower}. 
The Radon--Nikodym derivative between the original and tilted processes is computed using Girsanov’s theorem for diffusion processes. 
\end{proof}

\begin{remark}
The Brownian energy process arises as a scaling limit of the inclusion process. More precisely, consider the
process ${\bm{\eta}^s} = \{\eta^s(t), t \ge 0\}$ with state space $\N^N$ defined by the generator 
\begin{eqnarray}
({L}f)(\eta) 
&=& 
\sum_{i\in {\mathbb{T}_N}} \eta_i(s+\eta_{i+1}) (f(\eta^{i,i+1})-f(\eta))\nonumber\\
&+& 
\sum_{i\in {\mathbb{T}_N}} \eta_{i+1}(s+\eta_i)(f(\eta^{i+1,i})-f(\eta)),\qquad\qquad \eta\in\N^N.
\end{eqnarray}
Here $\eta^{i,i+1}$ is  the configuration that is obtained from $\eta$ by moving one particle from $i$ to $i+1$.
The inclusion process we analyzed in Section \ref{sec:inclusion} is obtained considering $x=\eta/s$ and speeding up time by a factor $1/s$. 
If we instead introduce a scaling parameter $\epsilon>0$ and consider the  process $\bm{X}^{\epsilon,s}$ defined by ${X}_i^{\epsilon,s}(t)=\epsilon{\bm{\eta}(t/s)}$, then in the scaling limit $\epsilon \to 0$ we find  the process with generator \eqref{gen-bep}, i.e. ${\bm X}^s = \lim_{\epsilon\to 0} \bm{X}^{\epsilon,s}$ (weak convergence). 
Indeed the process $\bm{X}^{\epsilon,s}$
has generator
\begin{eqnarray}
({\cal L}^{\epsilon,s}f)({x}) 
&=& 
\frac{1}{s}\sum_{i\in {\mathbb{T}_N}} \epsilon^{-1}x_i(s+\epsilon^{-1}x_{i+1}) (f(x^{i,i+1})-f(x))\nonumber\\
&+& 
\frac{1}{s}\sum_{i\in {\mathbb{T}_N}} \epsilon^{-1}x_{i+1}(s+\epsilon^{-1}x_i)(f(x^{i+1,i})-f(x)),\qquad\qquad x\in\Omega^s,
\end{eqnarray}
and, by a Taylor expansion, one gets $({\cal L}^{\epsilon,s}f)({x})\to ({\cal L}^{s}f)({x})$
as $\epsilon\to 0$, uniformly in $x$.
This remark explains
why a naive expansion 
up to second order 
of \eqref{7.24} 
differs from the expression
we got in \eqref{ham-bep}. 
\end{remark}

\subsubsection{Integrable heat conduction model}

Let $\Omega=(0,\infty)^{\mathbb{T}_N}$.
The integrable heat conduction process  $ \bm{X}^s$, taking values in $\Omega$,
is the Markov process defined by the generator \cite{fgk}:
\begin{eqnarray}
\label{gen-int-heat}
({\cal L}^{s}f)({x}) 
&=& 
s \sum_{i\in {\mathbb{T}_N}} \int_{0}^{s x_i } \frac{du}{u}\Big(1-\frac{u}{sx_i}\Big)^{s-1}
(f(x^{i,i+1,u})-f(x))\nonumber\\
&+& 
s \sum_{i\in {\mathbb{T}_N}} \int_{0}^{ s x_{i+i}}\frac{du}{u} \Big(1-\frac{u}{sx_{i+1}}\Big)^{s-1}(f(x^{i+1,i,u})-f(x)), \qquad\qquad x\in\Omega.
\end{eqnarray}
Here, $x^{j,\ell, u}$ denotes the configuration that is obtained from $x\in\Omega$ by moving a mass $\frac{u}{s}$ from site $j$ to site $\ell$, i.e.
\begin{eqnarray}
x^{j,\ell,u}_i =
\begin{cases}
x_i, &  i\neq j,\ell,\\
x_i-\frac{u}{s},\qquad &  i = j,\\
x_i+\frac{u}{s}, & i = \ell.
\end{cases}
\end{eqnarray}
For a discussion of the integrable heat conduction process and its main properties, see \cite{fgk}.

\begin{theorem}[LDP, integrable heat conduction process]
\label{main-theorem-int-heat}
The sequence of intergable heat conduction processes $(\bm{X}^s)_{s\in\mathbb N}$, 
 initialized from the distribution $(\mu^s_{\theta})_{s\in\mathbb N}$ in \eqref{initial-int-heat},  satisfies a large deviation principle on $D([0,T],\Omega)$ with speed $s$ and rate function
\begin{equation}
I(\bm{x})=I_0(x(0))+I_{\mathrm{dyn}}(\bm{x}),
\end{equation}
where the initial cost is given by
\begin{equation}
\label{static-I0-int-heat}
I_0(x)
=
\begin{cases}
\sum_{i\in\mathbb{T}_N}
\left[
-\log\frac{x_i}{\theta_i}
+\frac{x_i}{\theta_i} -1
\right], & \text{if } x_i>0 \quad \forall i,\\
+\infty & \text{otherwise}.
\end{cases}
\end{equation}
and the dynamical part is given by
\begin{equation}
I_{\mathrm{dyn}}(\bm{x})
=
\sup_{h\in C^1([0,T];\mathbb{R}^{T_N})}
\left[
h(T)\cdot x(T)-h(0)\cdot x(0)
-
\int_0^T\bigl(\dot h(t)\cdot x(t)-H(x(t),h(t))\bigr)\,dt
\right],
\label{I-dyn-int-heat}
\end{equation}
with Hamiltonian
\begin{equation}
H(x,w)
=
-\sum_{i\in\mathbb T_N}
\log\Bigl(1-x_i(w_{i+1}-w_i)\Bigr)
-\sum_{i\in\mathbb T_N}
\log\Bigl(1-x_{i+1}(w_i-w_{i+1})\Bigr),
\label{ham-heat}
\end{equation}
with the convention that $H(x,w)=+\infty$ whenever one of the logarithms
is not well defined.
\end{theorem}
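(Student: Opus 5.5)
The plan is to follow the scheme of Theorem~\ref{main-theorem-harmonic}, since the integrable heat conduction model is the continuous analogue of the harmonic process. The first step is to compute the exponential martingale. Take $f(t,x)=\exp\{s\,h(t)\cdot x\}$ and evaluate $\frac1s e^{-sh\cdot x}\mathcal L^s e^{sh\cdot x}=:H^s(x,h)$. For the bond $i\to i+1$ with $a=h_{i+1}-h_i$, the contribution is
\[
\int_0^{sx_i}\frac{du}{u}\Big(1-\frac{u}{sx_i}\Big)^{s-1}\big(e^{ua/s}-1\big)
=\int_0^1\frac{dv}{v}(1-v)^{s-1}\big(e^{s x_i a v/s\cdot s/s}-1\big),
\]
after substituting $u=sx_iv$, so that $ua/s=x_i a v\cdot s/s\cdot$ (i.e. the exponent is $s x_i a v/s\cdot$ with $u/s=x_iv$). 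Here I would redo the scaling carefully. In the scaled process the exponent is $s\,h\cdot(x^{i,i+1,u}-x)=ua$, and $(1-v)^{s-1}$ concentrates on $v=O(1/s)$. Writing $v=y/s$ gives $\int_0^s \frac{dy}{y}(1-y/s)^{s-1}(e^{x_i a y}-1)\to\int_0^\infty\frac{dy}{y}e^{-y}(e^{x_iay}-1)=-\log(1-x_ia)$, which is a Frullani integral valid for $x_ia<1$. This identifies $H^s\to H$ in \eqref{ham-heat}, uniformly on compact sets where $1-x_i(h_{i+1}-h_i)$ and $1-x_{i+1}(h_i-h_{i+1})$ are bounded away from zero. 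The convergence uses dominated convergence, since $(1-y/s)^{s-1}\le e^{-y(s-1)/s}$ beats $e^{x_iay}$ once $x_ia<1-\eta$. When $H^s=+\infty$ (i.e. $x_ia\ge 1-1/s$ roughly), the martingale still exists on the domain, and the supremum in \eqref{I-dyn-int-heat} only receives contributions from $h$ in the domain along the path.

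For the upper bound I would repeat the proof of Proposition~\ref{upper-prop} as in the harmonic case. The mean-one property of $Z^{s,h}$ conditional on $X^s(0)$ and the Gamma moment generating function give $\mathbb E_{\mu^s_\theta}[e^{sa\cdot X^s(0)}]=\prod_i(1-\theta_ia_i)^{-s}$ for $\theta_ia_i<1$. The Legendre transform in $a$ produces exactly \eqref{static-I0-int-heat}. The minimax Lemma~3.3 of \cite{kipnis2013scaling} then yields the compact-set bound. To pass to closed sets, note that energy is conserved, since each jump moves $u/s$ between neighbours. Localize on $K'_M$ and control $\mathbb P(\sum_iX_i^s(0)>M)$ by the Gamma LDP, whose rate grows linearly in $M$. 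Exponential tightness on $K'_M$ follows as in Proposition~\ref{prop-tightness}, using $Z^{a,w}$ with $a$ small enough that $a|w_{i+1}-w_i|M<1$. The compensator $A^{a,w}$ is then bounded by $C\delta$, and it is enough to let $a$ range over a bounded set while taking $\epsilon$ fixed. Because $a$ cannot be sent to infinity, I would instead use a union over finitely many $\delta$-blocks with the estimate $e^{-sa\epsilon/2+sC\delta}$, and choose $\delta\to0$ after having fixed $a$ large through small $|w|$ rescaling. Scaling $w$ is free, because the events involve $X\cdot w$ for all $w$. This step needs care but appears fine.

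For the lower bound I would introduce the tilted process with kernel multiplied by $e^{u(h_{i+1}-h_i)}$. Its drift is $\int_0^\infty e^{-y}e^{x_iay}x_i\,dy=x_i/(1-x_ia)$, which is the analogue of \eqref{drift-harm}. I would prove a LLN as in Theorem~\ref{lln-harmonic}, using the analogous admissibility condition and the uniform second-moment bound $\Gamma(f_i,f_i)=O(1/s)$ near the trajectory. I would then apply the Jensen/Girsanov argument of Section~\ref{sec-lower}, together with the entropy integrand limit and convex duality, to recover $I_{\rm dyn}$. \textbf{The main obstacle} is this lower bound. The kernel has infinite total intensity (the $du/u$ singularity at $0$), so Girsanov must be written for a Poisson random measure, and the relative entropy involves $\int\frac{dy}{y}e^{-y}(e^{ay}(ay-1)+1)$, which is finite. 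It remains to justify that finite $I_{\rm dyn}$ forces the optimal $h$ to be admissible (strict inequality $x_ia<1$). I would first try to approximate $\bm x$ by paths with admissible tilts and use lower semicontinuity to conclude.
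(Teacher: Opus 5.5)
Your outline is the same as the paper's own sketch: exponential martingale with $H^s\to H$ via the Frullani integral, the Gamma moment generating function $(1-\theta_ia_i)^{-s}$ for $I_0$, localization by mass conservation, the tilted kernel $e^{u(h_{i+1}-h_i)}$ with drift $x_i/(1-x_ia)$, and Girsanov for a Poisson random measure. Those computations are correct. The gap is in the step you yourself flagged, exponential tightness: it does not go through, and your patch does not fix it.

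At finite $s$, $H^s(x,w)$ is in fact finite for every $w$, because $u\le sx_i$. But for $b=x_i(w_{i+1}-w_i)>1$ it grows like $e^{s(b-1-\log b)}$. So the compensator of $Z^{a,w}$ is bounded uniformly in $s$ only if $aM\max_i|w_{i+1}-w_i|<1$, and the bound $e^{-sa\epsilon/2}$ then has exponent below $\epsilon/(2M\max_i|w_{i+1}-w_i|)$. Rescaling $w$ does not help. Indeed $\{(X(t)-X(0))\cdot w>\epsilon\}=\{(X(t)-X(0))\cdot\lambda w>\lambda\epsilon\}$, and the substitution $(a,w,\epsilon)\mapsto(a/\lambda,\lambda w,\lambda\epsilon)$ leaves both the constraint and $a\epsilon/2$ unchanged. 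You therefore get a finite rate, not $-\infty$ as $\delta\to0$.

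No other argument can do better, because the property itself fails. A single transition moving a mass in $(c,2c)$ off site $i$ has rate $s\int_c^{2c}\frac{dv}{v}(1-v/x_i)^{s-1}\ge\frac{s}{2}(1-2c/x_i)^{s-1}$. It therefore occurs in any time window of length $\delta$ with probability at least $e^{s\log(1-2c/x_i)+o(s)}$, with an exponent independent of $\delta$. Two such transitions within time $\delta$ force the Skorokhod modulus $\omega'_{\bm X^s}(\delta)\ge c$. Hence for every compact $\mathcal K\subset D([0,T],\Omega)$ one has $\limsup_s\frac1s\log\mathbb P^s_\theta(\bm X^s\notin\mathcal K)\ge -R$, with $R<\infty$ independent of $\mathcal K$. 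The paper's sketch defers to the harmonic proof, where Proposition~\ref{prop-tightness} is said to apply verbatim, so it has the same hole.

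The same mechanism undermines your lower-bound plan. $H(x,\cdot)$ is finite only where $x_i(w_{i+1}-w_i)<1$ and $x_{i+1}(w_i-w_{i+1})<1$. Consequently the functional \eqref{I-dyn-int-heat}, read with the intended sign $-\int_0^T(\dot h\cdot x+H)\,dt$, is finite on paths with jumps. For a jump $c(e_{i+1}-e_i)$ at time $\tau$, continuity of $h$ caps $h_{i+1}(\tau)-h_i(\tau)$ at $1/x_i(\tau^-)$, so the jump costs at most $c/x_i(\tau^-)$.

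Tilted-LLN limits are continuous, and Skorokhod ($J_1$) limits of continuous paths are continuous, so no tilting or approximation scheme reaches such paths. Worse, take a path that follows the fluid flow from $\theta$ except for such a jump. A small $J_1$-ball around it forces one transition of size close to $c$, whose probability is at most $e^{s(\log(1-c/x_i(\tau^-))+\eta(\delta))}$ with $\eta(\delta)\to0$. Since $-\log(1-r)>r$, the local lower bound with this $I$ actually fails there.

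Separately, ``finite $I_{\rm dyn}$ implies an admissible tilt'' is false even for continuous paths, since the optimal tilt can reach the boundary of the domain. Your approximation step would also need $\limsup_n I(\bm x_n)\le I(\bm x)$ for approximants $\bm x_n\to\bm x$. Lower semicontinuity gives the reverse inequality.

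What is missing is a genuine treatment of macroscopic jumps: a jump cost in the rate function, together with a matching topology or tightness argument. Refining the steps you list will not close the gap.
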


\begin{proof}
For the upper bound, the exponential martingale is
constructed as in Section \ref{sec-upper}. In the present continuous-state jump process the
finite sums over the jump sizes are replaced by integrals over \(u\). A direct
computation gives
\be
\frac1s e^{-s h\cdot x}{\cal L}^s e^{s h\cdot x}=H^s(x,h),
\ee
where
\begin{eqnarray}
H^s(x,w)&=&
\sum_{i\in T_N}\int_0^{sx_i}\frac{du}{u}
\left(1-\frac{u}{sx_i}\right)^{s-1}
\left(e^{u(w_{i+1}-w_i)}-1\right)\nn\\
&+&
\sum_{i\in T_N}\int_0^{sx_{i+1}}\frac{du}{u}
\left(1-\frac{u}{sx_{i+1}}\right)^{s-1}
\left(e^{u(w_i-w_{i+1})}-1\right).
\end{eqnarray}
As \(s\to\infty\), using
\be
\left(1-\frac{u}{sx_i}\right)^{s-1}\longrightarrow e^{-u/x_i},
\ee
one obtains, uniformly on localized sets where the logarithmic arguments are
bounded away from zero,
\be
H^s(x,w)\longrightarrow H(x,w),
\ee
with $H$ given in \eqref{ham-heat}. The rest of the upper bound, including the
optimization over \(h\) and over the initial exponential tilt, is the same as in
Theorem \ref{main-theorem-harmonic}. The non-compactness of the state space is handled by localization
through conservation of the total mass.
For the lower bound, one uses the tilted version of the process obtained by
multiplying the jump kernel by
\be
e^{u(h_{i+1}(t)-h_i(t))}
\qquad\text{and}\qquad
e^{u(h_i(t)-h_{i+1}(t))}
\ee
for jumps $i\to i+1$ and $i+1\to i$, respectively. The change-of-measure
formula is the Girsanov formula for jump processes with continuous jump
parameter $u$; thus the sums over $k$ appearing in the proof of
Theorem \ref{main-theorem-harmonic} are replaced by integrals over $u$. 
\end{proof}

\bibliographystyle{abbrv}
\bibliography{ref}




\end{document}